\numberwithin{equation}{section}
\theoremstyle{plain}
\newtheorem{theorem}{Theorem}[section]
\newtheorem{lemma}[theorem]{Lemma}
\newtheorem{corollary}[theorem]{Corollary}
\newtheorem{proposition}[theorem]{Proposition}
\theoremstyle{definition}
\newtheorem{definition}[theorem]{Definition}
\newtheorem{example}[theorem]{Example}
\theoremstyle{remark}
\newtheorem{remark}[theorem]{Remark}
\newtheorem{case[theorem]}{Case}
\def \R{{\mathbb R}}
\def\norm#1.#2.{\lVert#1\rVert_{#2}}
\def\R{\mathbb R}
\title[$M$-elliptic pseudo-differential operators  on $\mathbb{Z}^n$]
{ Symbolic calculus and  $M$-ellipticity of pseudo-differential operators on  $\mathbb{Z}^n$ 
}
\author{Vishvesh Kumar}\address{Vishvesh Kumar  \endgraf Department of Mathematics: Analysis, Logic and Discrete Mathematics	\endgraf Ghent University	\endgraf Krijgslaan 281, Building S8,	B 9000 Ghent,	Belgium.} \email{vishveshmishra@gmail.com}
\author{Shyam Swarup Mondal } \address{Shyam Swarup Mondal  \endgraf Department of Mathematics	\endgraf Indian Institute of Technology Guwahati,	\endgraf Guwahati-781039, Assam, India.} \email{mondalshyam055@gmail.com}
\keywords{Pseudo-differential operators; ellipticity, $M$-elliptic;  symbolic calculus; minimal operators;   maximal  operators, Fredholmness, index} \subjclass[2010]{Primary 35S05, 47G30; Secondary 43A85, 35A17, 43A77}
\date{\today}
\begin{document}

	\maketitle

	\allowdisplaybreaks
	
	\begin{abstract}
		In this paper, we introduce and study a class of pseudo-differential operators on the lattice  $\mathbb{Z}^n$. More preciously, we consider a weighted symbol class $M_{\rho, \Lambda}^m(\mathbb{ Z}^n\times \mathbb{T}^n), m\in \mathbb{R}$ associated to a suitable weight function  $\Lambda$ on $\mathbb{ Z }^n$.  We study elements of the symbolic calculus for pseudo-differential operators associated with $M_{\rho, \Lambda}^m(\mathbb{ Z}^n\times \mathbb{T}^n)$  by deriving formulae for the composition, adjoint, transpose. We define the notion of $M$-ellipticity for symbols belonging to $M_{\rho, \Lambda}^m(\mathbb{ Z}^n\times \mathbb{T}^n)$ and construct the parametrix of $M$-elliptic  pseudo-differential operators.  Further, we investigate the minimal and maximal extensions for $M$-elliptic pseudo-differential operators and show that they coincide on $\ell^2(\mathbb{Z}^n)$ subject to the  $M$-ellipticity of symbols. We also determine the domains of the minimal and maximal operators. Finally, We discuss Fredholmness and compute the index of  $M$-elliptic pseudo-differential operators on $\mathbb{Z}^n$.
	\end{abstract}

	\tableofcontents 	
	
	\section{Introduction}
	\noindent  This paper contributes to the theory of pseudo-differential operators on $\mathbb{Z}^n$ by developing a symbolic calculus for  pseudo-differential operators associated with  weighted symbol class $M_{\rho, \Lambda}^m(\mathbb{ Z}^n\times \mathbb{T}^n)$. We also discuss results concerning to the minimal and maximal extensions  of operators corresponding to this symbol class and study Fredholmness and $M$-ellipticity of discrete
	pseudo-differential operators  on $\mathbb{Z}^n$.  	The pseudo-differential operators on $\mathbb{Z}^n$ are suitable for solving difference equations on $\mathbb{Z}^n$ (see \cite{bot,DTR}). Difference equations very often appear in mathematical modeling problems and in the discretization of continuous problems.  There are various physical models realised as difference equations, see e.g.\cite{rabin06,rabin13} for the analysis of Schr\"odinger, Dirac, and many other such operators on $\mathbb{Z}^n$ and  their spectral properties.   A suitable theory of pseudo-differential operators on $\mathbb{ Z}$ and on the lattice  $\mathbb{ Z}^n$  has been developed by several authors, but without symbolic calculus \cite{Delgado, Shala3,rabi10,rab2,Kumar202020}. Very recently, a global symbolic calculus for psuedo-differential operators on the lattice $\mathbb{ Z}^n$ has been constructed by  Botchway, Kabiti and Ruzhansky \cite{bot}. They called pseudo-differential operators on $\mathbb{Z}^n$ as {\it pseudo-difference operators} and presented several applications (see also \cite{DTR}). An interesting and important   contrast  between the standard theory of pseudo-differential operators  on $\mathbb{R}^n $ and the theory of pseudo-difference operators on $\mathbb{Z}^{n}$ is that, since $\mathbb{Z}^{n}$ is discrete, the Schwartz kernels of the corresponding pseudo-differencial operators on $\mathbb{Z}^n$ do not have singularities at the diagonal. Also, it is worth noting that 
	the  phase space in this case  is $\mathbb{Z}^{n} \times \mathbb{T}^{n}$ with the frequencies being elements of the compact space $\mathbb{T}^{n}$.    The classical theory of pseudo-differential operators depends on symbol classes with increasing decay of symbols after taking their derivatives with respect to the frequency variable. However,  in the case of $\mathbb{Z}^n$,  since the frequency space $\mathbb{T}^{n}$ is compact,   one can not construct the discrete calculus using standard methods relying on the decay properties with respect to  the frequency component of the phase space and   any  improvement with respect to the decay of the frequency variable is not possible.  Thus the discrete symbolic calculus is essentially different  from usual symbolic calculus on $\mathbb{R}^n$.   Since the usual derivatives are not suitable operators while dealing with the space variable $k \in \mathbb{Z}^{n}$, it is convenient to  work with appropriate difference operators on $\mathbb{Z}^n$.   The global theory of discrete pseudo-differential operators and corresponding  symbolic calculus have been developed in \cite{bot} and one can look \cite{ruz1, Ruz14,turrrr,  Ruz11, RuzT, turr, turrr} for  the similar  theory but   swapping the order of the space and frequency variables. Some recent developments on the theory of pseudo-differential operators on $\mathbb{ Z}^n$ and its applications can be found in  \cite{CardonaKumar,CardonaKumar1,Cardonaaa} and references therein.   For  theory of pseudo-differential operators on other classes of groups, we refer to    \cite{bogg, taylor2020, CardonaKumar, DK, Vish,  RuzT,RT13,fis}.\\


	\noindent   It is well known that pseudo-differential operators are $L^{p}$-bounded  when $p=2$ for the class $S_{\rho, \delta}^{0}$ with $0 \leq \delta<\rho \leq 1$ (see \cite{two,six}) but Fefferman in \cite{feff} proved that  the pseudo-differential operators whose symbols belong to the class $S_{\rho, 0}^{0}$  with $0<\rho<1$ are not $L^{p}$-bounded  in general  for  $p \neq 2$. To avoid the  difficulty described above, 
	Taylor in \cite{taylor81} introduced a suitable symbol subclass $M_{\rho}^{m}$ of $S_{\rho, 0}^{0}$ and  developed   symbolic  calculus for the associated pseudo-differential operators.  Later, Garello and Morando \cite{gar,mor05} introduced the subclass  $M_{\rho, \Lambda}^{m}$  of $S_{\rho, \Lambda}^{m}$, which are just a weighted version of the symbol class introduced by Taylor and developed the symbolic calculus for the associated pseudo-differential operators  with   many applications to  study the  regularity of multi-quasi-elliptic     operators.  	Here,   we  
	introduce a weighted symbol class $M_{\rho, \Lambda}^m(\mathbb{ Z}^n\times \mathbb{T}^n), m\in \mathbb{R}$ associated to a suitable weight function  $\Lambda$ on $\mathbb{ Z }^n$  and study elements of the symbolic calculus for  pseudo-differential operators  associated  with symbol in $M_{\rho, \Lambda}^m(\mathbb{ Z}^n\times \mathbb{T}^n)$. Note that $M_{\rho, \Lambda}^m(\mathbb{ Z}^n\times \mathbb{T}^n)$  is a subclass of Taylor's symbol class   $S_{\rho, \Lambda}^{m} \left( \mathbb{Z}^n \times \mathbb{T}^n \right) $ on $\mathbb{Z}$  which is  just a weighted version of the    H\"ormander symbol class $ S_{\rho}^{m}\left(\mathbb{Z}^{n} \times \mathbb{T}^{n}\right)$ on $\mathbb{Z}^n$,     introduced by Botchway, Kabiti and Ruzhansky\cite{bot}.    Considerable attention has been devoted  to study   various properties of pseudo-differential operators associated with the symbol class $M_{\rho, \Lambda}^{m}$ on $\mathbb{R}^n$ by several researchers. In particular, Wong \cite{wong06} studied  asymptotic expansions and parametrix for the  pseudo-differential operators on $L^p(\mathbb{R}^n)$ whose symbol is in    $M_{\rho, \Lambda}^{m}, m>0$. One of the important questions arising in the study of pseudo-differential operators  is to determine the
	conditions under which the “minimal” and “maximal” extensions of a pseudo-differential operators on
	$L^p(\mathbb{R}^n), 1 < p < \infty$, coincide.  Wong\cite{wong06} investigated this question and showed that that  the minimal and maximal extensions on $L^p(\mathbb{R}^n)$ for $M$-elliptic pseudo-differential operators coincides and determined the domains of the minimal and maximal operators. The minimal and maximal extensions of $M$-elliptic pseudo-differential operators  on $L^p(\mathbb{T}^n)$ were studied by Alimohammady and Kalleji  in \cite{alim13}.	Later,  a symbolic calculus, minimal and maximal extensions of $M$-elliptic pseudo-differential operators  in  $L^p(\mathbb{T}^n)$,  among some other things, studied   by Kalleji in \cite{kal}.    One can look \cite{publo,mor16} for the minimal and maximal extension pseudo-differential operators in different settings. Recently, Dasgupta and Kumar \cite{vish}  discussed the ellipticity and Fredholmness of pseudo-differential operators on $\ell^2(\mathbb{ Z}^n)$ in the context of minimal and maximal operators. In particular, the authors proved that the minimal and maximal extensions of pseudo-differential operators coincide when the elliptic symbol lies in the class $S^m(\mathbb{ Z}^n\times \mathbb{T}^n)$, the symbol class constructed by  Botchway, Kabiti and Ruzhansky  \cite{bot}. They also   discussed  Fredholmness property of discrete pseudo-differential operators. The Fredholmness measures the almost invertibility of the operators and depends  on the space where the operators
	are defined.  The relation between   ellipticity and Fredholmness property is crucial in the  theory of pseudo-differential operators on a given function space.    The equivalence of ellipticity and Fredholmness and the applications of SG pseudo-differential operators with positive order on $L^p(\mathbb{R}^n), 1 < p < \infty$, has been investigated in  \cite{apara, wong99}. 
	However,  such questions for pseudo-differential operators  on $\mathbb{Z}^n$ associated with weight symbol class on $\mathbb{Z}^n$ have not been studied yet. In this paper, we introduce the notion of $M$-ellipticity for psuedo-differential operators on $\mathbb{Z}^n$ and construct the minimal and maximal extensions for $M$-elliptic pseudo-differential operators. We  will show that  they coincide on $\ell^2(\mathbb{Z}^n)$ subject to the  $M$-ellipticity of symbols. We also study Fredholmness property of discrete pseudo-differential operators and   compute the index of $M$-elliptic pseudo-differential 	operators on $\mathbb{Z}^n$.  
	\\

	\noindent  The presentation of this manuscript is divided into six sections including the introduction.   In section  \ref{sec2}, we recall basics of Fourier analysis and pseudo-differential operators  on $\mathbb{Z}^n.$ In Section \ref{secwe}, we introduce  the weighted symbol class $M_{\rho, \Lambda}^{m}(\mathbb{Z}^n \times \mathbb{T}^n ), m\in \R$  associated to a suitable weight function  $\Lambda$ on $\mathbb{ Z }^n$ and study several important properties of weighted symbol class $M_{\rho, \Lambda}^{m}(\mathbb{Z}^n \times \mathbb{T}^n )$. 
	Elements of   symbolic calculus for  pseudo-differential operators associated with $M_{\rho, \Lambda}^m(\mathbb{ Z}^n\times \mathbb{T}^n)$  by deriving formulae for the composition, adjoint, transpose is given  in   Section \ref{sec3}.  We also define the nation of $M$-ellipticity for symbols belonging to $M_{\rho, \Lambda}^m(\mathbb{ Z}^n\times \mathbb{T}^n)$ and construct the parametrix of $M$-elliptic  pseudo-differential operators.  In section \ref{sec4},  we construct the minimal and maximal extensions of $M$-elliptic pseudo-differential operators  and show that they coincide in $\ell^2(\mathbb{Z}^n)$  under $M$-ellipticity assumption and determine the domains of the minimal, and hence maximal operators.  Finally, in Section  \ref{sec6}, we discuss Fredholmness  and   compute the index of   $M$-elliptic pseudo-differential operators on $\mathbb{Z}^n$. 
	\\

	\section{Fourier analysis and pseudo-differential operators on $\mathbb{Z}^n$} \label{sec2}
	In this section,  we first recall some notation and basic properties of discrete Fourier
	analysis and pseudo-differential operators on $\mathbb{Z}^n$. We refer \cite{bot,RuzT,CardonaKumar1,vish,Delgado,rab2,rabi10} for more details and the study of various operator theoretical and mapping properties of these objects.
	The  Fourier transform $\hat{f}$ of a function $f \in \ell^{1}\left(\mathbb{Z}^{n}\right)$ is defined by
	$$
	\widehat{f}(x)=\sum_{k \in \mathbb{Z}^{n}} e^{-2 \pi i k \cdot x} f(k)
	$$
	for all $x \in \mathbb{T}^{n}$. The Fourier transform on $\mathbb{Z}^n$ is also known as discrete Fourier transform. The discrete Fourier transform can be extended to $\ell^{2}\left(\mathbb{Z}^{n}\right)$ using
	the standard density arguments. We normalize the Haar measures on $\mathbb{Z}^{n}$ and $\mathbb{T}^{n}$ in such a
	manner so that the following Plancherel formula  holds:
	$$
	\sum_{k \in \mathbb{Z}^{n}}|f(k)|^{2}=\int_{\mathbb{T}^{n}}|\widehat{f}(x)|^{2} d x.
	$$
	The inverse of discrete Fourier transform is given by
	$$
	f(k)=\int_{\mathbb{T}^{n}} e^{2 \pi i k \cdot x} \widehat{f}(x) d x, \quad k \in \mathbb{Z}^{n},
	$$ where $f$ belongs to a suitable function space.
	Let $f$ be a function on $\mathbb{Z}^{n}$ and $e_{j} \in \mathbb{N}^{n}$
	be such that $e_{j}$ has 1 in the $j$-th entry and zeros elsewhere. The  difference operator $\Delta_{k_{j}}$
	is defined by
	$$
	\Delta_{k_{j}} f(k)=f\left(k+e_{j}\right)-f(k)
	$$
	and set
	$$
	\Delta_{k}^{\alpha}=\Delta_{k_{1}}^{\alpha_{1}} \Delta_{k_{2}}^{\alpha_{2}} \ldots \Delta_{k_{n}}^{\alpha_{n}}
	$$
	for all
	$
	\alpha=\left(\alpha_{1}, \alpha_{2}, \ldots, \alpha_{n}\right) \in \mathbb{N}_{0}^{n}=\mathbb{N}^{n} \cup\{0\}
	$
	and  for all 
	$
	k=\left(k_{1}, k_{2}, \ldots, k_{n}\right) \in \mathbb{Z}^{n}.
	$
	Then we have the following well known formulae  from \cite{bot}, which will be  useful in the sequel.
	\begin{enumerate}
		\item[(i)]  \label{s7}
		For a complex-valued function $f$ on $\mathbb{Z}^n,$ we have
		$$
		\left(\Delta_{k}^{\alpha} f \right)(k)=\sum_{\beta \leq \alpha}(-1)^{|\alpha-\beta|}\left(\begin{array}{c}
			\alpha \\
			\beta
		\end{array}\right) f(k+\beta), \quad k \in \mathbb{Z}^{n}.
		$$
		\item[(ii)]  \label{s8}
		For $f, g: \mathbb{Z}^{n} \rightarrow \mathbb{C},$ the following Leibniz formula holds
		$$
		\left(\Delta_{k}^{\alpha}(f g)\right)(k)=\sum_{\beta \leq \alpha}\left(\begin{array}{l}
			\alpha \\
			\beta
		\end{array}\right)\left(\Delta_{k}^{\beta} f\right)(k)\left({\Delta}_{k}^{\alpha-\beta} g\right)(k+\beta), \quad k \in \mathbb{Z}^{n}
		$$
		for all multi-indices $\alpha .$ 
		\item[(iii)]  \label{s9} 
		For two functions $f, g: \mathbb{Z}^{n} \rightarrow \mathbb{C}$, the following identity is true
		$$
		\sum_{k \in \mathbb{Z}^{n}} f(k)\left(\Delta_{k}^{\alpha} g\right)(k)=(-1)^{|\alpha|} \sum_{k \in \mathbb{Z}^{n}}\left(\bar{\Delta}_{k}^{\alpha} f\right)(k) g(k), \quad k \in \mathbb{Z}^{n}
		$$
		where $\left(\bar{\Delta}_{k_{j}} f\right)(k)=f(k)-f\left(k-e_{j}\right).$
	\end{enumerate}
	\begin{remark}\label{s10}
		For $k \in \mathbb{Z}^{n}$ and $\alpha \in \mathbb{N}_{0}^{n},$ we define $k^{(\alpha)}=k_{1}^{\left(\alpha_{1}\right)} \ldots k_{n}^{\left(\alpha_{n}\right)},$ where $k_{j}^{(0)}=1$
		and $k_{j}^{(m+1)}=k_{j}^{(m)}\left(k_{j}-m\right)=k_{j}\left(k_{j}-1\right) \ldots\left(k_{j}-m\right) .$ Then $$\Delta_{k}^{\gamma} k^{(\alpha)}=\alpha^{(\gamma)} k^{\alpha-\gamma}.$$
	\end{remark}
	We will  use the following usual nomenclature,
	$$ D_{x}^{\alpha}=D_{x_{1}}^{\alpha_{1}} \cdots D_{x_{n}}^{\alpha_{n}}, \quad D_{x_{j}}=\frac{1}{2 \pi i} \frac{\partial}{\partial x_{j}} $$ and  $$ D_{x}^{(\alpha)}=D_{x_{1}}^{\left(\alpha_{1}\right)} \cdots D_{x_{n}}^{\left(\alpha_{n}\right)}, \quad D_{x_{j}}^{(\ell)}=\prod_{m=0}^{\ell}\left(\frac{1}{2 \pi i} \frac{\partial}{\partial x_{j}}-m\right), \quad \ell \in \mathbb{N}.$$
	As usual, $D_{x}^{0}=D_{x}^{(0)}=I .$ The operators $D_{x}^{(\alpha)}$ become very useful in the analysis related to the torus as they appear in the periodic Taylor expansion \cite{ruz1}. 
	
	Let us now recall the H\"ormander symbol classes $ S_{\rho}^{m}\left(\mathbb{Z}^{n} \times \mathbb{T}^{n}\right)$ on $\mathbb{Z}^n$ defined in \cite{bot}. Note that the following symbol classes were denoted by $S^m_{\rho, 0}(\mathbb{Z}^n \times \mathbb{T}^n)$ in \cite{bot}. 
	\begin{definition} \label{eq10} Let $m \in \mathbb{R}$ and $\rho>0.$ We say that a function $\sigma: \mathbb{Z}^{n} \times \mathbb{T}^{n} \rightarrow \mathbb{C}$ belongs to $ S_{\rho}^{m}\left(\mathbb{Z}^{n} \times \mathbb{T}^{n}\right)$  if $\sigma(k, \cdot) \in C^{\infty}\left(\mathbb{T}^{n}\right)$ for all $k \in \mathbb{Z}^{n},$ and for
		all multi-indices $\alpha, \beta$, there exists a positive constant $C_{\alpha, \beta}$ such that  
		$$
		\left|D_{x}^{(\beta)} \Delta_{k}^{\alpha} \sigma(k, x)\right| \leq C_{\alpha, \beta}(1+|k|)^{m-\rho|\alpha|}, \quad (k, x)\in  \mathbb{Z}^{n} \times \mathbb{T}^{n}.
		$$
		
	\end{definition}
	For the symbol $\sigma\in S_{\rho}^{m}\left(\mathbb{Z}^{n} \times \mathbb{T}^{n}\right)$, 	the corresponding pseudo-differential  operator associated with $\sigma$ is given by
	$$
	(T_\sigma f) (k):=\int_{\mathbb{T}^{n}} e^{2 \pi i k \cdot x} \sigma(k, x) \hat{f}(x) \mathrm{d} x, \quad k \in \mathbb{Z}^{n}
	$$
	for every $f \in C^{\infty}(\mathbb{Z}^n)$. 	We denote  $\mathrm{OP} S_\rho^{m}\left(\mathbb{Z}^{n} \times \mathbb{T}^{n}\right)$ be the set of all operators  corresponding to the symbol class $S_\rho^{m}\left(\mathbb{Z}^{n} \times \mathbb{T}^{n}\right)$. 

	\section{Weighted symbol class $M_{\rho, \Lambda}^{m}(\mathbb{Z}^{n} \times \mathbb{T}^{n})$ and its properties} \label{secwe}
	
	This section is devoted to introduce the weighted symbol classes  $M_{\rho, \Lambda}^m(\mathbb{Z}^n \times \mathbb{T}^n )$ in conjunction with a suitable weight function $\Lambda $ on $\mathbb{ Z}^n$. These symbol classes in the setting of Euclidean spaces were introduced in \cite{gar, mor05} and which can be traced back to the excellent book of Taylor \cite{taylor81}.
	We start with the definition of weight function of $\mathbb{Z}^n$.
	
	\begin{definition}\label{definition} Let $ \Lambda \in C^{ \infty }\left(\mathbb{Z}^n \right)$. We say that  $\Lambda$ is a weight function if there exist
		suitable positive constants $\mu_{0} \leq \mu_{1} \leq \mu$ and $C_{0}, C_{1}$ such that
		\begin{equation} \label{growth}
			C_{0}( 1+  |k|  )^{\mu_{0} } \leq \Lambda( k) \leq C_{1} (1+  |k| )^{\mu_{1} }
		\end{equation}  and for every multi-indices $ \alpha \in \mathbb{N}_0^n,  \gamma \in \{0, 1\}^n$  and  some positive constant $C_{\alpha, \gamma}$ such that 
		$$\left| k^{\gamma} \Delta_k^{\alpha+\gamma } \Lambda(k)\right| \leq C_{\alpha, \gamma} \Lambda(k)^{1-(1 / \mu)|\alpha|}$$
		for all $k \in \mathbb{Z}^n,$ where agreeing the standard multi-index notation $k^{\gamma} =k_1^{\gamma_1} \cdots k_n^{\gamma_n}  .$
		
	\end{definition}
	\begin{example}\quad 
		\begin{itemize}
			\item[(i)] For a positive  real number $m$, define $\Lambda_m$ on $\mathbb{Z}^n$ by   $$\Lambda_m(k)=\sqrt{1+k_1^{2m}+\cdots + k_n^{2m}}.$$ Then $\Lambda_m$ is a example of  weight function on $\mathbb{Z}^n$ of order $m$.
			\item[(ii)]  Similarly, the function $\Lambda_{m_1, \ldots, m_n}(k)=\sqrt{1+k_1^{2m_1}+\cdots + k_n^{2m_n}}$  is also a  weight function on $\mathbb{Z}^n$, where $(m_1, \ldots, m_n)\in \mathbb{N}^n$ with $\inf_j m_j\geq 1.$
		\end{itemize}
	\end{example}
	Now, we define the following Taylor symbol class $S_{\rho, \Lambda}^{m} \left( \mathbb{Z}^n \times \mathbb{T}^n \right) $ associated to a suitable weight function $\Lambda$ on $\mathbb{Z}^n.$ This class of symbols can be thought as the weighted H\"ormander class on $\mathbb{Z}^n.$
	
	\begin{definition}
		Let $m \in \mathbb{R} $ and $\rho\in (0, \frac{1}{\mu}]$. We define Taylor symbol class   $S_{\rho, \Lambda}^{m} \left( \mathbb{Z}^n \times \mathbb{T}^n \right) $ as the set of all symbols $\sigma: \mathbb{Z}^n \times \mathbb{T}^n \rightarrow \mathbb{C}$ which are smooth in $x$ for all $k \in \mathbb{Z}^n$  and for all  multi-indices $\alpha, \beta \in \mathbb{Z}^n_{+} $, there exists a constant $C_{\alpha, \beta, m}>0$ such that 
		\begin{align}\label{s1}
			\left| D_{x}^{(\beta)} \Delta_{k}^{\alpha}\sigma(k, x)\right| \leq C_{ \alpha, \beta, m}\Lambda(k)^{m-\rho|\alpha|}, \quad (k, x)\in  \mathbb{Z}^{n} \times \mathbb{T}^{n}.
		\end{align}
	\end{definition}
	\begin{remark}
		When $\Lambda(k)=\Lambda_1(k)=(1+|k|^2)^{\frac{1}{2}},  k\in \mathbb{Z}^n$, then the symbol class $S_{ \rho, \Lambda}^{m}\left( \mathbb{Z}^n \times \mathbb{T}^n \right) $ coincides with the H\"ormander class $ S_\rho^{m}\left( \mathbb{Z}^n \times \mathbb{T}^n \right) $ for $m \in \mathbb{R}$ as defined in  Definition \ref{eq10}.
	\end{remark} 
	
	As usual we set $$S_{\rho, \Lambda}^{\infty}\left( \mathbb{Z}^n \times \mathbb{T}^n \right):=\bigcup_{m \in \mathbb{R}} S_{\rho, \Lambda}^{m}\left( \mathbb{Z}^n \times \mathbb{T}^n \right) $$ and $$ S_{\rho, \Lambda}^{-\infty}\left( \mathbb{Z}^n \times \mathbb{T}^n \right):=\bigcap_{m \in \mathbb{R}} S_{\rho, \Lambda}^{m}\left( \mathbb{Z}^n \times \mathbb{T}^n \right).$$ 
	Define the pseudo-differential operator $T_\sigma$ associated with symbol $\sigma\in  S_{ \rho, \Lambda}^{m}\left( \mathbb{Z}^n \times \mathbb{T}^n \right)$ as
	\begin{align}\label{s12}
		T_\sigma f(k)=\int_{\mathbb{T}^n}e^{2\pi i k\cdot x} \sigma (k, x) \widehat{f}(x)  ~dx, ~k\in  \mathbb{Z}^n
	\end{align}
	for every $f \in C^{\infty}(\mathbb{Z}^n)$. We  write $\operatorname{OP}S_{\rho, \Lambda}^{m}\left( \mathbb{Z}^{n} \times \mathbb{T}^{n} \right)$ for the class of  pseudo-differential operators associated with the symbol class  $ S_{\rho, \Lambda}^{m}\left( \mathbb{Z}^{n} \times \mathbb{T}^{n} \right)$.
	
	Now, we are ready to define the main ingredient of this paper, namely the symbol class  $M_{\rho, \Lambda}^{m}\left( \mathbb{Z}^n \times \mathbb{T}^n \right).$
	\begin{definition}\label{0}
		For $m \in \mathbb{R}$ and $\rho\in (0, \frac{1}{\mu}]$, we define the  symbol class   $M_{\rho, \Lambda}^{m}\left( \mathbb{Z}^n \times \mathbb{T}^n \right) $ to be the class of all  $\sigma:   \mathbb{Z}^n \times \mathbb{T}^n\to \mathbb{C}$ which are smooth in $x$ for all $k \in \mathbb{Z}^n$ such that for all  $\gamma \in\{0,1\}^n$
		\begin{align}\label{s2}
			k^\gamma \Delta_k^{\gamma} \sigma(k, x)\in S_{ \rho, \Lambda}^{m}\left( \mathbb{Z}^n \times \mathbb{T}^n \right) .
		\end{align}
	\end{definition}
	

	
	\begin{remark}\label{eq6}
		For every $m \in \mathbb{R}$ and $\rho\in (0, \frac{1}{\mu}]$,  we have 
		\begin{align}\label{new1}
			M_{ \rho,  \Lambda}^{m}\left(\mathbb{Z}^{n} \times \mathbb{T}^{n}\right) \subset S_{ \rho, \Lambda}^{m}\left(\mathbb{Z}^{n} \times \mathbb{T}^{n}\right) .
		\end{align}
	\end{remark}
	
	\begin{proposition}\label{eq5}
		Let $\sigma(k, x)\in  C^{\infty} \left( \mathbb{Z}^n \times \mathbb{T}^n \right) $. Then the following  statements are equivalent:
		\begin{enumerate}
			\item  $  \sigma(k, x) \in M_{\rho,  \Lambda}^{m} \left( \mathbb{Z}^n \times \mathbb{T}^n \right),$
			\item \label{eq2}   for every $\alpha \in \mathbb{Z}_{+}^{n}$  and  $\gamma \in\{0,1\}^n$, we have 
			\begin{align} \label{eq1}
				k^\gamma \Delta_k^{\alpha+\gamma} \sigma(k, x)\in S_{ \rho, \Lambda}^{m-\rho|\alpha|}\left( \mathbb{Z}^n \times \mathbb{T}^n \right),
			\end{align}
			\item for every $\alpha, \beta \in \mathbb{Z}_{+}^{n}$, there  exists a constant $C_{ \alpha, \beta}>0$ such that 
			\begin{align} \label{eq3}
				\left| k^\gamma D_{x}^{(\beta)} \Delta_{k}^{\alpha+\gamma} \sigma(k, x)\right| \leq C_{ \alpha, \beta, \gamma}~\Lambda(k)^{m-\rho|\alpha|}	\end{align}	
			for all  $ k \in \mathbb{ Z}^n,  x \in \mathbb{T}^n$ and $ \gamma \in\{0,1\}^n$.
		\end{enumerate}
	\end{proposition}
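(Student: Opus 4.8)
The plan is to prove the cyclic chain $(1)\Rightarrow(3)\Rightarrow(2)\Rightarrow(1)$, of which two links are immediate and one carries all the content. For $(2)\Rightarrow(1)$ one simply takes $\alpha=0$ in \eqref{eq1}: then $k^\gamma\Delta_k^{\gamma}\sigma\in S^{m}_{\rho,\Lambda}(\mathbb{Z}^n\times\mathbb{T}^n)$, which is exactly \eqref{s2}, i.e. $\sigma\in M^m_{\rho,\Lambda}$. (The implication $(2)\Rightarrow(3)$, not strictly needed for the cycle, is likewise trivial: take the zero-order difference in the defining estimate of $S^{m-\rho|\alpha|}_{\rho,\Lambda}$ and move $D_x^{(\beta)}$ past the $x$-independent factor $k^\gamma$.)

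The heart of the matter is $(1)\Rightarrow(3)$, which I would prove by induction on $N=|\gamma|$, the number of nonzero entries of $\gamma\in\{0,1\}^n$. The base case $N=0$ reduces to $\sigma\in S^m_{\rho,\Lambda}$ (the $\gamma=0$ instance of \eqref{s2}), which is exactly \eqref{eq3} with $\gamma=0$. For the inductive step, fix $\gamma$ with $|\gamma|=N\ge1$ and set $h_\gamma:=k^\gamma\Delta_k^\gamma\sigma$, which lies in $S^m_{\rho,\Lambda}$ by hypothesis. Applying the discrete Leibniz formula to $\Delta_k^\alpha h_\gamma=\Delta_k^\alpha\big(k^\gamma\,\Delta_k^\gamma\sigma\big)$ and using Remark \ref{s10} (so that $\Delta_k^{\beta'}k^\gamma=k^{\gamma-\beta'}$ for $\beta'\le\gamma$ and vanishes otherwise), then isolating the $\beta'=0$ term, gives
\begin{align*}
k^\gamma(\Delta_k^{\alpha+\gamma}\sigma)(k)=\Delta_k^\alpha h_\gamma(k)-\sum_{0\neq\beta'\le\gamma}\binom{\alpha}{\beta'}\,k^{\gamma-\beta'}\big(\Delta_k^{\alpha+(\gamma-\beta')}\sigma\big)(k+\beta').
\end{align*}
Applying $D_x^{(\beta)}$ throughout (it commutes with $\Delta_k$ and with multiplication by functions of $k$), the first term is bounded by $C\Lambda(k)^{m-\rho|\alpha|}$ since $h_\gamma\in S^m_{\rho,\Lambda}$. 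For each summand set $\gamma''=\gamma-\beta'$, so $|\gamma''|<N$; the key observation is that on every coordinate $j$ with $\gamma''_j=1$ one has $\beta'_j=0$, whence $k^{\gamma''}=(k+\beta')^{\gamma''}$. Thus the summand equals $\binom{\alpha}{\beta'}(k+\beta')^{\gamma''}\big(D_x^{(\beta)}\Delta_k^{\alpha+\gamma''}\sigma\big)(k+\beta')$, to which the inductive hypothesis for $\gamma''$, evaluated at the point $k+\beta'$, applies and gives $C\Lambda(k+\beta')^{m-\rho|\alpha|}$. Summing over the finitely many $\beta'\in\{0,1\}^n$ and using that $\Lambda$ is slowly varying, i.e. $\Lambda(k+\beta')\asymp\Lambda(k)$ uniformly for $\beta'$ in a bounded set (a consequence of the first-order difference estimate in Definition \ref{definition}), yields \eqref{eq3} for $\gamma$ and closes the induction.

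For $(3)\Rightarrow(2)$ I would run the same Leibniz expansion in the opposite direction: to verify $k^\gamma\Delta_k^{\alpha+\gamma}\sigma\in S^{m-\rho|\alpha|}_{\rho,\Lambda}$ one must estimate $D_x^{(\beta)}\Delta_k^{\alpha'}\big(k^\gamma\Delta_k^{\alpha+\gamma}\sigma\big)$ for arbitrary $\alpha'$; expanding by Leibniz, each resulting term has the form $k^{\gamma''}\big(D_x^{(\beta)}\Delta_k^{(\alpha+\alpha')+\gamma''}\sigma\big)(k+\beta')$ with $\gamma''=\gamma-\beta'\in\{0,1\}^n$, which is controlled by \eqref{eq3} (with $\alpha$ replaced by $\alpha+\alpha'$) via the same identity $k^{\gamma''}=(k+\beta')^{\gamma''}$ and the slow variation of $\Lambda$, producing the bound $C\Lambda(k)^{m-\rho|\alpha|-\rho|\alpha'|}$ as required. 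I expect the main obstacle to be purely bookkeeping: controlling the shift $k+\beta'$ forced by the discrete Leibniz rule. This is handled throughout by the two recurring facts — that the monomial factor is shift-invariant on exactly the coordinates where it is nontrivial, and that $\Lambda$ does not change order under bounded integer shifts — so no genuinely new estimate is needed beyond the hypotheses and the elementary properties of $\Lambda$.
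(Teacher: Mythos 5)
Your proof is correct, and it traverses the equivalence in the opposite direction from the paper: you prove $(1)\Rightarrow(3)\Rightarrow(2)\Rightarrow(1)$ with the trivial link being $(2)\Rightarrow(1)$ (take $\alpha=0$), whereas the paper proves $(1)\Rightarrow(2)\Rightarrow(3)\Rightarrow(1)$ with the trivial link being $(2)\Rightarrow(3)$. The substantive difference is the choice of induction variable: the paper's $(1)\Rightarrow(2)$ is an induction on $|\alpha|$ that peels off one difference $\Delta_{k_j}$ at a time via a single-step Leibniz identity, while you induct on $|\gamma|$ and peel off the monomial factor $k^\gamma$ via the full discrete Leibniz expansion of $\Delta_k^\alpha\bigl(k^\gamma\Delta_k^\gamma\sigma\bigr)$. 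Both routes ultimately rest on the same two facts — $\Delta_k^{\beta'}k^\gamma=k^{\gamma-\beta'}$ for $\beta'\le\gamma$ (vanishing otherwise), and the stability $\Lambda(k+\beta')\asymp\Lambda(k)$ under bounded shifts — so neither is more general, but your version handles the shift $k+\beta'$ produced by the Leibniz rule more transparently: the observation that $k^{\gamma''}=(k+\beta')^{\gamma''}$ on exactly the coordinates where $\gamma''$ is nontrivial lets you apply the inductive hypothesis verbatim at the shifted point, whereas the paper's $(1)\Rightarrow(2)$ silently drops this shift and its $(3)\Rightarrow(1)$ compresses the same bookkeeping into an appeal to Peetre's inequality. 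One small point worth making explicit in your write-up: the two-sided bound $\Lambda(k)\le C\Lambda(k+e_j)$ needs not only the first-order difference estimate but also that $\Lambda(k)\to\infty$ (from the lower polynomial bound in \eqref{growth}), so that $1-C\Lambda(k)^{-1/\mu}\ge\tfrac12$ outside a finite set; otherwise the argument is complete.
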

	\begin{proof}
		(1)$\implies$ (2):	Let us assume that $ \sigma(k, x) \in M_{\rho,  \Lambda}^{m}\left( \mathbb{Z}^n \times \mathbb{T}^n \right)$. In order to prove (2), we will use induction on $|\alpha|$. When $|\alpha|=0$, then (\ref{eq1}) holds trivially. Let us assume that (\ref{eq1})  holds  for  $|\alpha|\leq \ell .$ Let $\alpha'=\alpha+e_j$. Then using Leibnitz rule, we have 
		\begin{align*}
			k^\gamma 	  \Delta_k^{\alpha'+\gamma} \sigma(k, x) &=		\Delta_{k_{j}} 	( k^\gamma \Delta_k^{\alpha+\gamma} \sigma(k, x))-( \Delta_{k_{j}} 	 k^\gamma) \Delta_k^{\alpha+\gamma} \sigma(k, x).
		\end{align*}
		From the inductive hypothesis $$\Delta_{k_{j}} 	( k^\gamma \Delta_k^{\alpha+\gamma} \sigma(k, x))\in   S_{ \rho, \Lambda}^{m-\rho|\alpha|-\rho}\left( \mathbb{Z}^n \times \mathbb{T}^n \right)=S_{ \rho, \Lambda}^{m-\rho|\alpha'|}\left( \mathbb{Z}^n \times \mathbb{T}^n \right).$$
		Moreover, when $\gamma_j=0$,  then $( \Delta_{k_{j}} 	 k^\gamma) \Delta_k^{\alpha+\gamma} \sigma(k, x)=0$.  Again when  $\gamma_j=1$,  then  the $j$-th entry of $\gamma-e_j=\gamma'$ vanishes. Thus, using the inductive assumption, we have 
		\begin{align*}
			( \Delta_{k_{j}} 	 k^\gamma) \Delta_k^{\alpha+\gamma} \sigma(k, x)&= k^{\gamma'}\Delta_k^{\alpha+\gamma} \sigma(k, x)\\
			&=k^{\gamma'} \Delta_k^{\alpha'+\gamma'} \sigma(k, x)\\
			&=\Delta_{k_{j}}( k^{\gamma'}\Delta_k^{\alpha + \gamma'} \sigma(k, x))\in S_{ \rho, \Lambda}^{m-\rho|\alpha|-\rho}\left( \mathbb{Z}^n \times \mathbb{T}^n \right)=S_{ \rho, \Lambda}^{m-\rho|\alpha'|}\left( \mathbb{Z}^n \times \mathbb{T}^n \right).
		\end{align*}
		This completes the proof of part (2).  
		
		(2)$\implies$ (3): Since 
		$
		k^\gamma \Delta_k^{\alpha+\gamma} \sigma(k, x)\in S_{ \rho, \Lambda}^{m-\rho|\alpha|}\left( \mathbb{Z}^n \times \mathbb{T}^n \right)
		$   for any $\alpha \in \mathbb{Z}_{+}^{n}$  and  $\gamma \in\{0,1\}^n$, then 	\begin{align*} 
			\left| k^\gamma D_{x}^{(\beta)} \Delta_{k}^{\alpha+\gamma} \sigma(k, x)\right| \leq C_{ \alpha, \beta,\gamma}~\Lambda(k)^{m-\rho|\alpha|}	\end{align*}	
		for all  $ k \in \mathbb{ Z}^n,  x \in \mathbb{T}^n$ and $ \gamma \in\{0,1\}^n.$ This completes proof of part (3).

		(3)$\implies$ (1):  Now, we assume that   $\sigma(k, x)$ satisfies the estimates (\ref{eq3}).  Then   for $\gamma=0$, we obtain $\sigma(k, x)\in   S_{ \rho, \Lambda}^{m}\left( \mathbb{Z}^n \times \mathbb{T}^n \right)$. Thus for all  $\gamma \in\{0,1\}^n$ and $\alpha, \beta \in \mathbb{Z}_+^n$, using Leibnitz formula, we obtain
		\begin{align*} 
			D_{x}^{(\beta)} \Delta_{k}^{\alpha} (	k^\gamma \Delta_k^{\gamma} \sigma(k, x))  &= \sum_{\eta \leq \alpha}C_{\alpha, \eta, \gamma} \;  k^{\gamma-\eta} \;D_{x}^{(\beta)}  {\Delta}_{k}^{\alpha-\eta+\gamma} \sigma(k+\eta, x).
		\end{align*}
		An application of Peetre’s inequality (see Proposition 3.3.31 \cite{ruz1}) gives   
		\begin{align*} 
			| D_{x}^{(\beta)} \Delta_{k}^{\alpha} (	k^\gamma \Delta_k^{\gamma} \sigma(k, x)) | &\leq C_{\alpha, \beta, \gamma}~ \Lambda(k)^{m-\rho|\alpha|}, 
		\end{align*}
		for all  $ k \in \mathbb{ Z}^n,  x \in \mathbb{T}^n$,  i.e.,   $k^\gamma \Delta_k^{\gamma} \sigma(k, x)\in S_{ \rho, \Lambda}^{m }\left( \mathbb{Z}^n \times \mathbb{T}^n \right)$. This implies that $  \sigma(k, x) \in M_{\rho,  \Lambda}^{m}\left( \mathbb{Z}^n \times \mathbb{T}^n \right)$ and completes the proof of the proposition.
	\end{proof}
	
	The following two statements follows immediately as an application of the above results. 
	
	
	\begin{proposition}\label{new3}
		Let $m_1, m_2 \in \mathbb{R}$ and $\rho\in (0, \frac{1}{\mu}]$. Then for  $  a(k, x) \in M_{\rho,  \Lambda}^{m_1}\left(\mathbb{Z}^{n} \times \mathbb{T}^{n}\right) $ and $b(k, x) \in M_{\rho,  \Lambda}^{m_2}\left(\mathbb{Z}^{n} \times \mathbb{T}^{n}\right) $,  the following properties hold:
		\begin{enumerate}
			\item If $m_1 \leq m_2$ then $M_{\rho, \Lambda}^{m_1}\left(\mathbb{Z}^{n} \times \mathbb{T}^{n}\right) \subset M_{\rho,  \Lambda}^{m_2}\left(\mathbb{Z}^{n} \times \mathbb{T}^{n}\right) $;
			\item $a(k, x) b(k, x) \in M_{\rho, \Lambda}^{m_1+m_2}\left(\mathbb{Z}^{n} \times \mathbb{T}^{n}\right) $;
			\item  $  D_{x}^{(\beta)}  \Delta_{k}^{\alpha}a(k, x)  \in M_{\rho,  \Lambda}^{m_1-\rho |\alpha|}\left(\mathbb{Z}^{n} \times \mathbb{T}^{n}\right) $ for all  $\alpha, \beta \geq 0$.
		\end{enumerate}
	\end{proposition}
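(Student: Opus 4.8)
The plan is to reduce all three assertions to the pointwise characterization of $M_{\rho,\Lambda}^{m}$ obtained in Proposition \ref{eq5}: a smooth-in-$x$ symbol $\sigma$ lies in $M_{\rho,\Lambda}^{m}(\mathbb{Z}^n\times\mathbb{T}^n)$ if and only if the estimates (\ref{eq3}) hold for all $\alpha,\beta\in\mathbb{Z}_+^n$ and all $\gamma\in\{0,1\}^n$. Once each statement is phrased in terms of these estimates, the only additional inputs I need are the growth bound (\ref{growth}) on $\Lambda$, the discrete Leibniz formula for $\Delta_k^{\alpha}$ recalled in Section \ref{sec2} together with the corresponding torus Leibniz rule for $D_x^{(\beta)}$, and Peetre's inequality (Proposition 3.3.31 of \cite{ruz1}).

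For part (1), I would use that the lower bound in (\ref{growth}) gives $\Lambda(k)\geq C_0>0$ for every $k\in\mathbb{Z}^n$. Since $m_1-m_2\leq 0$, the factor $\Lambda(k)^{m_1-m_2}$ is bounded above by a constant independent of $k$, so each estimate (\ref{eq3}) for $a$ at level $m_1$ upgrades at once to the same estimate at level $m_2$. By Proposition \ref{eq5} this yields $a\in M_{\rho,\Lambda}^{m_2}$. This step uses nothing beyond the characterization and the monotonicity already available for the classes $S_{\rho,\Lambda}^{m}$.

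For part (3), set $c=D_x^{(\beta)}\Delta_k^{\alpha}a$. Because difference operators in $k$ commute with derivatives in $x$, for any $\alpha',\beta'$ and $\gamma\in\{0,1\}^n$ one has $D_x^{(\beta')}\Delta_k^{\alpha'+\gamma}c=D_x^{(\beta')}D_x^{(\beta)}\Delta_k^{\alpha+\alpha'+\gamma}a$, and the composite $D_x^{(\beta')}D_x^{(\beta)}$ is a constant-coefficient differential operator in $x$ that expands as a finite linear combination of torus derivatives, none of which affects the $k$-decay in (\ref{eq3}). Applying (\ref{eq3}) for $a$ with difference order $\alpha+\alpha'$ gives $|k^{\gamma}D_x^{(\beta')}\Delta_k^{\alpha'+\gamma}c|\lesssim \Lambda(k)^{m_1-\rho|\alpha+\alpha'|}=\Lambda(k)^{(m_1-\rho|\alpha|)-\rho|\alpha'|}$, which is precisely the characterization of $c\in M_{\rho,\Lambda}^{m_1-\rho|\alpha|}$.

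The substantial part is (2), and this is where I expect the real work. To show $ab\in M_{\rho,\Lambda}^{m_1+m_2}$ I would verify (\ref{eq3}) for the product directly: expanding $\Delta_k^{\alpha+\gamma}(ab)$ by the discrete Leibniz formula of Section \ref{sec2} and $D_x^{(\beta)}$ by its torus Leibniz rule produces a finite sum of terms of the form $\bigl(D_x^{(\beta_1)}\Delta_k^{\alpha_1}a\bigr)(k,x)\,\bigl(D_x^{(\beta_2)}\Delta_k^{\alpha_2}b\bigr)(k+\delta,x)$ with $\alpha_1+\alpha_2=\alpha+\gamma$ and bounded shift $\delta\in\{0,1\}^n$. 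The two delicate points are: (i) the weight $k^{\gamma}$, which, since $\gamma\in\{0,1\}^n$, I would factor as $k^{\gamma}=k^{\gamma_1}k^{\gamma_2}$ matching the supports produced by the Leibniz split, so that each factor carries a weight compatible with (\ref{eq3}) for $a$ and for $b$ respectively; and (ii) the shift $k+\delta$, which I would absorb by replacing $\Lambda(k+\delta)$ by $\Lambda(k)$ up to a constant, using the polynomial bounds (\ref{growth}) together with Peetre's inequality. Combining the two estimates via $\Lambda(k)^{m_1-\rho|\alpha_1|}\Lambda(k)^{m_2-\rho|\alpha_2|}=\Lambda(k)^{(m_1+m_2)-\rho(|\alpha_1|+|\alpha_2|)}$ and noting $|\alpha_1|+|\alpha_2|=|\alpha|+|\gamma|\geq|\alpha|$ (so that the extra powers of $\Lambda$ are harmless by $\Lambda\geq C_0>0$) then produces the bound $\Lambda(k)^{(m_1+m_2)-\rho|\alpha|}$ demanded by (\ref{eq3}). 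The bookkeeping of the shifts and of the distribution of $k^{\gamma}$ across the Leibniz terms is the main obstacle; once it is organized correctly, the rest is routine.
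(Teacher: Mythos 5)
Your proposal is correct and follows essentially the same route as the paper: all three parts are reduced to the pointwise characterization of Proposition \ref{eq5}, with the lower bound $\Lambda(k)\geq C_{0}>0$ from \eqref{growth} handling (1), a discrete Leibniz expansion with the weight $k^{\gamma}$ distributed between the two factors (plus a Peetre-type absorption of the shift) handling (2), and commutation of $\Delta_k$ with $D_x$ handling (3). One small accounting point in (2): applying \eqref{eq3} to a term $k^{\gamma_i}D_x^{(\beta_i)}\Delta_k^{\alpha_i}$ with $\alpha_i=\tilde\alpha_i+\gamma_i$ gives $\Lambda(k)^{m_i-\rho|\tilde\alpha_i|}=\Lambda(k)^{m_i-\rho|\alpha_i|+\rho|\gamma_i|}$ rather than $\Lambda(k)^{m_i-\rho|\alpha_i|}$, but this loss of $\rho|\gamma|$ is exactly offset by $|\alpha_1|+|\alpha_2|=|\alpha|+|\gamma|$, so the product still lands precisely on the required bound $\Lambda(k)^{m_1+m_2-\rho|\alpha|}$.
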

	\begin{proof}
		Let $a(k, x)\in   M_{\rho,  \Lambda}^{m_1}\left(\mathbb{Z}^{n} \times \mathbb{T}^{n}\right) $. Then for all  $\gamma \in\{0,1\}^n$, we have 
		$
		k^\gamma \Delta_k^{\gamma} a(k, x)\in S_{ \rho, \Lambda}^{m}\left( \mathbb{Z}^n \times \mathbb{T}^n \right) 
		,$ i.e.,  for all  multi-indices $\alpha, \beta \in \mathbb{Z}^n $, there exists a positive constant $C_{\alpha, \beta}$ such that 
		\begin{align}\label{eq4}
			\left| D_{x}^{(\beta)} \Delta_{k}^{\alpha}\left(k^\gamma \Delta_k^{\gamma}a(k, x)\right)\right| \leq C_{ \alpha, \beta, \gamma}~\Lambda(k)^{m_1-\rho|\alpha|}.
		\end{align}
		Since $m_1 \leq m_2$, from the relation  (\ref{eq4}),   we get 
		$$\left| D_{x}^{(\beta)} \Delta_{k}^{\alpha}\left(k^\gamma \Delta_k^{\gamma}a(k, x)\right)\right| \leq C_{ \alpha, \beta, \gamma}~\Lambda(k)^{m_2-\rho|\alpha|}$$
		for all $k \in \mathbb{Z}^{n}$ and $x \in \mathbb{T}^{n}$. This implies that  $a\in   M_{\rho,  \Lambda}^{m_2}\left(\mathbb{Z}^{n} \times \mathbb{T}^{n}\right) $. Again, let $  a(k, x) \in M_{ \rho, \Lambda}^{m_1}\left(\mathbb{Z}^{n} \times \mathbb{T}^{n}\right) $ and $b(k, x) \in M_{\rho,  \Lambda}^{m_2}\left(\mathbb{Z}^{n} \times \mathbb{T}^{n}\right) $. In order to show   $(ab)(k, x) \in M_{\rho, \Lambda}^{m_1+m_2}\left(\mathbb{Z}^{n} \times \mathbb{T}^{n}\right) $, by part (3) of Proposition \ref{eq5}, its enough to prove that,  for every $\alpha, \beta \in \mathbb{Z}_{+}^{n}$, there  exists a constant $C_{ \alpha, \beta}>0$ such that 
		\begin{align} \label{eq300}
			\left| k^\gamma D_{x}^{(\beta)} \Delta_{k}^{\alpha+\gamma} (ab)(k, x)\right| \leq C_{ \alpha, \beta,\gamma}~\Lambda(k)^{m-\rho|\alpha|}	\end{align}	
		for all  $ k \in \mathbb{ Z}^n,  x \in \mathbb{T}^n$ and $ \gamma \in\{0,1\}^n$.  Now,  for any $\gamma \in\{0,1\}^n$, we have
		\begin{align}\label{eq301}\nonumber
			&	k^\gamma D_{x}^{(\beta)}  \Delta_k^{\alpha+\gamma} (ab)(k, x)\\\nonumber
			&=\sum_{\tilde{\alpha} \leq \alpha+\gamma}C_{ \tilde{\alpha}, \gamma}~D_{x}^{(\beta)}  (k^{{\tilde{\alpha}}} \Delta_{k}^{\tilde{\alpha}} a(k, x)) ~(k^{\gamma-\tilde{\alpha}}~ {\Delta}_{k}^{\alpha+\gamma-\tilde{\alpha}} b(k+\tilde{\alpha}, x))\\
			&=\sum_{\tilde{\alpha} \leq \alpha+\gamma}C_{ \tilde{\alpha}, \gamma} \sum_{\tilde{\beta} \leq \beta} C_{\tilde{\beta}}   (k^{{\tilde{\alpha}}}D_{x}^{(\tilde{\beta})} \Delta_{k}^{\tilde{\alpha}} a(k, x)) ~(k^{\gamma-\tilde{\alpha}}~ D_{x}^{(\beta-\tilde{\beta})}{\Delta}_{k}^{\alpha+\gamma-\tilde{\alpha}} b(k+\tilde{\alpha}, x)).
		\end{align}
		Since $  a(k, x) \in M_{ \rho, \Lambda}^{m_1}\left(\mathbb{Z}^{n} \times \mathbb{T}^{n}\right) $ and $b(k, x) \in M_{\rho,  \Lambda}^{m_2}\left(\mathbb{Z}^{n} \times \mathbb{T}^{n}\right) $,  from Proposition \ref{eq5},  (\ref{eq300}), and  (\ref{eq301}), we have  
		$a(k, x)b(k, x) \in M_{\rho, \Lambda}^{m_1+m_2}\left(\mathbb{Z}^{n} \times \mathbb{T}^{n}\right) .$ This completes   statement (2). Statement (3)  follows from  Proposition \ref{eq5}  and the characterization given in  (\ref{s2}).
	\end{proof}
	\begin{corollary}
		If $\Lambda(k)$ is a weight function. Then for any $m \in \mathbb{R}$, $\Lambda(k) \in M_{\frac{1}{\mu}, \Lambda}^{1}\left(\mathbb{Z}^{n} \times \mathbb{T}^{n}\right) $ and $\Lambda(k)^{m} \in M_{\frac{1}{\mu}, \Lambda}^{m}\left(\mathbb{Z}^{n} \times \mathbb{T}^{n}\right) $. In particular, if $\rho\in (0, \frac{1}{\mu}]$, then $\Lambda(k) \in M_{\rho, \Lambda}^{1}\left(\mathbb{Z}^{n} \times \mathbb{T}^{n}\right) $ and $\Lambda(k)^{m} \in M_{\rho,  \Lambda}^{m}\left(\mathbb{Z}^{n} \times \mathbb{T}^{n}\right) $.
	\end{corollary}
	
	\begin{lemma}\label{new2}
		For  every $m \in \mathbb{R}$ and $0<\rho\leq \frac{1}{\mu}$, there exists a positive   $N_{0}$ such that
		\begin{align}\label{eq7}
			S_{\rho, \Lambda}^{m-N_{0}} \left(\mathbb{Z}^{n} \times \mathbb{T}^{n}\right) \subset M_{ \rho, \Lambda}^{m} \left(\mathbb{Z}^{n} \times \mathbb{T}^{n}\right) \subset S_{ \rho, \Lambda}^{m}\left(\mathbb{Z}^{n} \times \mathbb{T}^{n}\right) .
		\end{align}
		More precisely, \(N_{0}:=n\left(\frac{1}{\mu_{0}}-\rho\right)\). Moreover,  \begin{align}\label{eq8}
			\bigcap_{m \in \mathbb{R}} M_{\rho, \Lambda}^{m}\left( \mathbb{Z}^n \times \mathbb{T}^n \right) =\bigcap_{m \in \mathbb{R}} S_{\rho, \Lambda}^{m}\left( \mathbb{Z}^n \times \mathbb{T}^n \right) = S_{\rho, \Lambda}^{-\infty}\left( \mathbb{Z}^n \times \mathbb{T}^n \right) .
		\end{align}
	\end{lemma}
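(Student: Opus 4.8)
The plan is to prove the two inclusions in (\ref{eq7}) separately and then deduce (\ref{eq8}) formally. The right-hand inclusion $M_{\rho,\Lambda}^{m}\left(\mathbb{Z}^{n}\times\mathbb{T}^{n}\right)\subset S_{\rho,\Lambda}^{m}\left(\mathbb{Z}^{n}\times\mathbb{T}^{n}\right)$ is already recorded in Remark \ref{eq6}, so all the work lies in the left-hand inclusion $S_{\rho,\Lambda}^{m-N_{0}}\left(\mathbb{Z}^{n}\times\mathbb{T}^{n}\right)\subset M_{\rho,\Lambda}^{m}\left(\mathbb{Z}^{n}\times\mathbb{T}^{n}\right)$.

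For the left inclusion I would take $\sigma\in S_{\rho,\Lambda}^{m-N_{0}}\left(\mathbb{Z}^{n}\times\mathbb{T}^{n}\right)$ and verify the characterization of $M_{\rho,\Lambda}^{m}$ furnished by part (3) of Proposition \ref{eq5}: for all multi-indices $\alpha,\beta$ and all $\gamma\in\{0,1\}^{n}$ one must bound $\left|k^{\gamma}D_{x}^{(\beta)}\Delta_{k}^{\alpha+\gamma}\sigma(k,x)\right|$ by a constant times $\Lambda(k)^{m-\rho|\alpha|}$. Applying the $S_{\rho,\Lambda}^{m-N_{0}}$ estimate directly to $D_{x}^{(\beta)}\Delta_{k}^{\alpha+\gamma}\sigma$ yields the decay $\Lambda(k)^{m-N_{0}-\rho|\alpha|-\rho|\gamma|}$, while the monomial prefactor satisfies $|k^{\gamma}|\leq(1+|k|)^{|\gamma|}$ since $\gamma\in\{0,1\}^{n}$.

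The key quantitative step is to trade the polynomial weight for a power of $\Lambda$ using the lower bound $C_{0}(1+|k|)^{\mu_{0}}\leq\Lambda(k)$ from (\ref{growth}), which gives $|k^{\gamma}|\lesssim\Lambda(k)^{|\gamma|/\mu_{0}}$. Multiplying the two estimates produces the exponent $m-\rho|\alpha|-N_{0}+|\gamma|\left(\tfrac{1}{\mu_{0}}-\rho\right)$. Because $\rho\leq\tfrac{1}{\mu}\leq\tfrac{1}{\mu_{0}}$ (the latter since $\mu_{0}\leq\mu$), the factor $\tfrac{1}{\mu_{0}}-\rho$ is nonnegative, so the surplus is largest when $|\gamma|=n$; choosing $N_{0}=n\left(\tfrac{1}{\mu_{0}}-\rho\right)$ exactly absorbs it and returns the required order $m-\rho|\alpha|$. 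This is the only nontrivial point, and it is precisely where the stated value of $N_{0}$ is forced: each coordinate factor $k_{j}$ costs up to $\Lambda^{1/\mu_{0}}$ while the accompanying difference $\Delta_{k_{j}}$ recovers only $\Lambda^{-\rho}$, so the net loss per active coordinate is $\tfrac{1}{\mu_{0}}-\rho$, over at most $n$ coordinates.

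Finally, (\ref{eq8}) follows formally from the two inclusions. The equality $\bigcap_{m}S_{\rho,\Lambda}^{m}=S_{\rho,\Lambda}^{-\infty}$ is the definition of $S_{\rho,\Lambda}^{-\infty}$. For the first equality, the inclusion $\bigcap_{m}M_{\rho,\Lambda}^{m}\subseteq\bigcap_{m}S_{\rho,\Lambda}^{m}$ is immediate from Remark \ref{eq6}; conversely, if $\sigma$ lies in every $S_{\rho,\Lambda}^{m}$, then for each fixed $m$ we have $\sigma\in S_{\rho,\Lambda}^{m-N_{0}}\subset M_{\rho,\Lambda}^{m}$ by the inclusion just proved, so $\sigma\in\bigcap_{m}M_{\rho,\Lambda}^{m}$, completing the argument.
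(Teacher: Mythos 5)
Your proposal is correct and follows essentially the same route as the paper: the right-hand inclusion is quoted from Remark \ref{eq6}, and the left-hand one is obtained by bounding $|k^{\gamma}|\leq(1+|k|)^{|\gamma|}\leq C_{\gamma}\Lambda(k)^{|\gamma|/\mu_{0}}$ via the lower bound in (\ref{growth}) and absorbing the resulting surplus $(\tfrac{1}{\mu_{0}}-\rho)|\gamma|\leq N_{0}$ into the loss of order, exactly as in the paper, with (\ref{eq8}) then following formally from (\ref{eq7}). Your closing paragraph even spells out the derivation of (\ref{eq8}) in slightly more detail than the paper does, which is a harmless refinement.
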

	\begin{proof}
		The inclusion relation on right hand side in (\ref{eq7})  follows easily from Remark \ref{eq6}. 	For the left hand side inclusion relation, take $\sigma(k, x) \in S_{ \rho, \Lambda}^{m-N_{0}}\left(\mathbb{Z}^{n} \times \mathbb{T}^{n}\right) $. Then,    for all multi-indices $\alpha, \beta ,  \gamma \geq 0 $   with  $\gamma_j \in\{0,1\},$ we have
		$$
		\left| k^\gamma D_{x}^{(\beta)}   \Delta_{k}^{\alpha+\gamma} \sigma(k, x)\right| \leq C_{\alpha, \beta}~(1+|k| )^{|\gamma|}\; \Lambda(k)^{m-N_{0}-\rho|\alpha+\gamma|}, \quad k\in\mathbb{Z}^n, x\in  \mathbb{T}^n
		$$
		for    some positive constant $C_{\alpha, \beta }$.  On the other hand, using the polynomial growth of $\Lambda(k)$ (see \eqref{growth}), we have
		$$
		(1+|k|)^{|\gamma|} \leq C_{\gamma}~ \Lambda(k)^{\frac{1}{\mu_{0}}|\gamma|}, \quad k\in\mathbb{Z}^n.
		$$
		Since $\rho \leq \frac{1}{\mu} \leq \frac{1}{\mu_{0}}$ and $|\gamma|\leq n$, we have 
		\begin{align*}\left| k^\gamma D_{x}^{(\beta)}   \Delta_{k}^{\alpha+\gamma} \sigma(k, x)\right| 
			& \leq C_{\alpha, \beta, \gamma}^{\prime} ~\Lambda(k)^{m-N_{0}+\left(\frac{1}{\mu_{0}}-\rho\right)|\gamma|-\rho|\alpha|} \\ 
			& \leq C_{\alpha, \beta, \gamma}^{\prime}~ \Lambda(k)^{m-\rho|\alpha|}.
		\end{align*} 
		Thus,  in view of  Proposition \ref{eq5},  we have $ \sigma(k, x) \in M_{\rho, \Lambda}^{m}\left(\mathbb{Z}^{n} \times \mathbb{T}^{n}\right) $.  Finally, the relation (\ref{eq8}) follows from the inclusions given in (\ref{eq7}).
	\end{proof}

	\section{Symbolic calculus and parametrix for $M_{\rho, \Lambda}^{m}\left( \mathbb{Z}^n \times \mathbb{T}^n \right) $}\label{sec3}
	In this section, we study  elements of the symbolic calculus of pseudo-differential operators on $\mathbb{Z}^n$ associated with the symbol class $M_{\rho, \Lambda}^{m}\left( \mathbb{Z}^n \times \mathbb{T}^n \right) $ by deriving formulae for the composition, adjoint, transpose. At the end of this section, we   derive formulae for  the parametrix for $M$-elliptic pseudo-differential operators. We begin with  the following result related with the asymptotic sum of symbols.
	\begin{theorem}\label{s5}
		Let $\{m_j\}_{j\in \mathbb{N}_0}$be a strictly decreasing sequence of real
		numbers such that $m_{j} \rightarrow-\infty
		$
		as $j \rightarrow \infty .$  Suppose that $\{\sigma_{j}\}_{j \in \mathbb{N}_0} \subset M_{\rho,  \Lambda}^{m_{j}}\left(\mathbb{Z}^n\times \mathbb{T}^n\right), j\in \mathbb{N}_0$ is a sequence of symbols. Then there exists a symbol $\sigma \in M_{\rho,  \Lambda}^{m_{0}}\left( \mathbb{Z}^n \times \mathbb{T}^n \right)$ such that
		$$
		\sigma(k, x) \sim \sum_{j=0}^{\infty} \sigma_{j}(k, x), 
		$$
		that means, 
		$$
		\sigma(k, x) -\sum_{j=0}^{N-1} \sigma_{j}(k, x)  \in M_{\rho,  \Lambda}^{m_{N}}\left(\mathbb{Z}^n\times \mathbb{T}^n\right)
		$$
		for every positive integer $N .$ 
	\end{theorem}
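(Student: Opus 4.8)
The plan is to construct $\sigma$ by a Borel-type (excision) procedure, damping each $\sigma_j$ by a cutoff in the lattice variable $k$ so that its contribution becomes negligible where $k$ is large. Concretely, fix $\psi\in C^\infty(\mathbb{R}^n)$ with $\psi(y)=0$ for $|y|\le \tfrac12$ and $\psi(y)=1$ for $|y|\ge 1$, and for an increasing sequence $R_j\uparrow\infty$ (to be fixed) put $\chi_j(k):=\psi(k/R_j)$, $k\in\mathbb{Z}^n$. I would then set $\sigma(k,x):=\sum_{j=0}^\infty \chi_j(k)\,\sigma_j(k,x)$. The first point is that this series is well defined: since $\chi_j$ is supported in $\{|k|\ge R_j/2\}$ and $R_j\to\infty$, for each fixed $k$ only finitely many indices $j$ contribute, so the sum is locally finite in $k$ and $\sigma(k,\cdot)\in C^\infty(\mathbb{T}^n)$ for every $k$.

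Next I would verify that each cutoff is a symbol of order zero with seminorms independent of $R_j$. A discrete mean-value estimate gives $|\Delta_k^{\alpha}\chi_j(k)|\le C_{\alpha} R_j^{-|\alpha|}$, and these differences are supported in the shell $R_j/2\le|k|\le R_j$, on which \eqref{growth} yields $c\,R_j^{\mu_0}\le\Lambda(k)\le C\,R_j^{\mu_1}$. Since $\rho\le \tfrac1\mu\le\tfrac1{\mu_1}$, and since $|k^\gamma|\le R_j^{|\gamma|}$ on the shell, one obtains $|k^\gamma \Delta_k^{\alpha+\gamma}\chi_j(k)|\le C R_j^{-|\alpha|}\le C'\,\Lambda(k)^{-\rho|\alpha|}$, whence $\chi_j\in M^0_{\rho,\Lambda}(\mathbb{Z}^n\times\mathbb{T}^n)$ with bounds uniform in $j$. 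Consequently, by Proposition \ref{new3}, $\chi_j\sigma_j\in M^{m_j}_{\rho,\Lambda}(\mathbb{Z}^n\times\mathbb{T}^n)$, and by part (3) of Proposition \ref{eq5} there are constants $A_{j,\alpha,\beta,\gamma}$, bounded in terms of the seminorms of $\sigma_j$ alone (hence independent of $R_j$), with $|k^\gamma D_x^{(\beta)}\Delta_k^{\alpha+\gamma}(\chi_j\sigma_j)|\le A_{j,\alpha,\beta,\gamma}\,\Lambda(k)^{m_j-\rho|\alpha|}$.

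With these estimates in hand, I would fix $N$ and split
\[
\sigma-\sum_{j=0}^{N-1}\sigma_j=\sum_{j=0}^{N-1}(\chi_j-1)\sigma_j+\sum_{j=N}^\infty \chi_j\sigma_j .
\]
Each term $(\chi_j-1)\sigma_j$ in the first (finite) sum is supported in the bounded set $\{|k|< R_j\}$, and any symbol with bounded support in $k$ lies in $S^{-\infty}_{\rho,\Lambda}=\bigcap_m M^m_{\rho,\Lambda}$ by \eqref{eq8}; hence the first sum belongs to $M^{m_N}_{\rho,\Lambda}$. For the tail, on the support of $\chi_j$ we have $\Lambda(k)\ge c R_j^{\mu_0}$, so for $j\ge N$, using $m_j-m_N\le 0$,
\[
\Lambda(k)^{m_j-\rho|\alpha|}=\Lambda(k)^{m_N-\rho|\alpha|}\,\Lambda(k)^{m_j-m_N}\le \big(cR_j^{\mu_0}\big)^{m_j-m_N}\Lambda(k)^{m_N-\rho|\alpha|}.
\]
The crux is then a diagonal choice of radii: I would pick $R_j$ so large that $A_{j,\alpha,\beta,\gamma}\,(cR_j^{\mu_0})^{m_j-m_{j-1}}\le 2^{-j}$ for all $|\alpha|+|\beta|+|\gamma|\le j$. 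Since $m_j-m_N\le m_j-m_{j-1}<0$ whenever $N\le j-1$, summing the term-by-term bounds and absorbing the finitely many terms of low index into $C_{\alpha,\beta,\gamma}\Lambda(k)^{m_N-\rho|\alpha|}$ gives $\big|k^\gamma D_x^{(\beta)}\Delta_k^{\alpha+\gamma}\big(\sum_{j\ge N}\chi_j\sigma_j\big)\big|\le C_{\alpha,\beta,\gamma}\,\Lambda(k)^{m_N-\rho|\alpha|}$, so part (3) of Proposition \ref{eq5} places the tail in $M^{m_N}_{\rho,\Lambda}$. Taking $N=1$ (together with $\sigma_0\in M^{m_0}_{\rho,\Lambda}$) yields $\sigma\in M^{m_0}_{\rho,\Lambda}$, and the general $N$ gives the claimed asymptotic relation.

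The main obstacle I expect is precisely this diagonal selection of the $R_j$: the constants must be controlled simultaneously over all multi-indices $\alpha,\beta,\gamma$ and all truncation levels $N$, while the cutoffs $\chi_j$ themselves depend on $R_j$. The resolution, and the reason the argument closes, is the uniform (in $R_j$) bound on the order-zero seminorms of $\chi_j$ established in the second step, which decouples this apparent circularity and lets a single increasing sequence $R_j$ serve all indices through the summable bound $2^{-j}$.
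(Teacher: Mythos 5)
Your construction is essentially the paper's own: the authors also build $\sigma=\sum_j \phi_j\sigma_j$ with $\phi_j(\xi)=\phi(\varepsilon_j\xi)$, $\varepsilon_j\downarrow 0$ (your $R_j=1/\varepsilon_j$), and split the remainder into the same two sums. The two points where you go beyond the paper are both to your credit: the paper dismisses the tail estimate as ``a routine calculation,'' whereas you make explicit the diagonal choice of $R_j$ via the summable bound $A_{j,\alpha,\beta,\gamma}(cR_j^{\mu_0})^{m_j-m_{j-1}}\le 2^{-j}$ and, crucially, the observation that the order-zero seminorms of $\chi_j$ are uniform in $R_j$, which is exactly what breaks the circularity; and you verify membership in $M^{m_N}_{\rho,\Lambda}$ directly through the weighted seminorms of Proposition \ref{eq5}(3), whereas the paper only establishes the remainder in the $S$-classes and then upgrades to the $M$-classes by writing $\sigma-\sum_{j<N}\sigma_j=\sum_{j=N}^{N'-1}\sigma_j+r_{N'}$ and invoking the inclusion $S^{m_N-N_0}_{\rho,\Lambda}\subset M^{m_N}_{\rho,\Lambda}$ of Lemma \ref{new2}. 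Your route is therefore self-contained at the level of the $M$-seminorms and arguably cleaner; the paper's route is shorter on the page but leans on Lemma \ref{new2} and leaves the convergence bookkeeping implicit.
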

	\begin{proof}
		Let $\sigma_{j} \in M_{\rho,  \Lambda}^{m_{j}}\left(\mathbb{Z}^n\times \mathbb{T}^n\right)$. 		Then  from the relation   (\ref{new1}),  we have $\sigma_{j} \in S_{\rho,  \Lambda}^{m_{j}}\left(\mathbb{Z}^n\times \mathbb{T}^n\right)$.   We   consider  a function $\phi \in C^{\infty}\left(\mathbb{R}^{n}\right)$ such that 
		
		$$	\phi (\xi)=\begin{cases}
			1& \text{if } |\xi| \geq 1,\\
			0&\text{if } |\xi| \leq \frac{1}{2} \\
		\end{cases}$$
		and 	$0\leq \phi(\xi )\leq 1 $  otherwise. Let  $\left(\varepsilon_{j}\right)_{j=0}^{\infty}$  be a sequence of positive real numbers such that $\varepsilon_{j}>\varepsilon_{j+1} \rightarrow 0$. Define  the function  $\phi _{j} \in C^{\infty}\left(\mathbb{R}^{n}\right)$
		by $\phi _{j}(\xi):=\phi  (\varepsilon_{j} \xi ) .$  
		The support   of $\Delta_{\xi}^{\alpha} \phi _{j}$ is bounded when   $|\alpha| \geq 1$. 
		Since $\sigma_{j} \in S_{\rho,  \Lambda}^{m_{j}}\left(\mathbb{Z}^n\times \mathbb{T}^n\right)$, using  the discrete Leibniz formula, we get 
		$$
		\left| D_{x}^{(\beta)} \Delta_{\xi}^{\alpha}  \left[\phi _{j}(\xi) \sigma_{j}(\xi, x)\right]\right| \leq C_{j \alpha \beta}\Lambda(\xi)^{m_{j}-\rho|\alpha| },
		$$
		where $C_{j \alpha \beta}$ is a positive constant.  This shows  that
		$ \phi _{j}(\xi) \sigma_{j}(\xi, x) \in S_{\rho,  \Lambda}^{m_{j}}\left(\mathbb{Z}^n\times \mathbb{T}^n\right) .$ 
		Further,   when $j$ is large
		enough, 	
		$\Delta_{\xi}^{\alpha}\left(\phi _{j}(\xi) \sigma_{j}(\xi, x)\right)$ vanishes for any fixed $\xi \in \mathbb{Z}^{n}$.  Construct $\sigma(\xi, x)$ of the form 
		$$
		\sigma(\xi, x):=\sum_{j=0}^{\infty} \phi _{j}(\xi) \sigma_{j}(\xi, x) \quad \xi \in\mathbb{Z}^n, x\in  \mathbb{T}^n.
		$$
		Clearly  $\sigma \in S_{\rho,  \Lambda}^{m_0}\left(\mathbb{Z}^n\times \mathbb{T}^n\right) .$  Further, we have 
		\begin{align*}
			&\bigg|D_{x}^{(\beta)} \Delta_{\xi}^{\alpha} \bigg[\sigma(\xi, x)-\sum_{j=0}^{N-1} \sigma_{j}(\xi, x)\bigg]\bigg| \\
			&\leq \sum_{j=0}^{N-1}\left|D_{x}^{(\beta)} \Delta_{\xi}^{\alpha}  \left\{\left[\phi _{j}(\xi)-1\right] \sigma_{j}(\xi, x)\right\}\right|+\sum_{j=N}^{\infty}\left|D_{x}^{(\beta)} \Delta_{\xi}^{\alpha} \left[\phi _{j}(\xi) \sigma_{j}(\xi, x)\right]\right|.
		\end{align*}
		Since $\varepsilon_{j}>\varepsilon_{j+1}$ and $\varepsilon_{j} \rightarrow 0$ as $j \rightarrow \infty,$  then the  sum $\sum_{j=0}^{N-1}\left|D_{x}^{(\beta)} \Delta_{\xi}^{\alpha}  \left\{\left[\phi _{j}(\xi)-1\right] \sigma_{j}(\xi, x)\right\}\right|$   vanishes, whenever $|\xi|$ is large. Thus, there exists a  positive constant  $C_{r N \alpha \beta}$ such that  $$\sum_{j=0}^{N-1}\left|D_{x}^{(\beta)} \Delta_{\xi}^{\alpha}  \left\{\left[\phi _{j}(\xi)-1\right] \sigma_{j}(\xi, x)\right\}\right|\leq C_{r N \alpha \beta}\Lambda(\xi)^{-r}$$ for any $r \in \mathbb{R}$. On the other side, a routine  calculation yields  $$\sum_{j=N}^{\infty}\left|D_{x}^{(\beta)} \Delta_{\xi}^{\alpha} \left[\phi _{j}(\xi) \sigma_{j}(\xi, x)\right]\right|\leq C_{N \alpha \beta}^{\prime}\Lambda(\xi)^{m_{N}-\rho|\alpha| },$$
		where  $C_{N \alpha \beta}^{\prime}$ is a positive constant. This shows that for every $N\in \mathbb{N}$, we have
		$$
		\sigma(k, x)-\sum_{j=0}^{N-1}  \sigma_{j}(k, x) \in S_{\rho, \Lambda}^{m_{N}}\left(\mathbb{Z}^{n} \times \mathbb{T}^{n}\right) .
		$$
		Since $m_{j} \rightarrow-\infty,$ as $j \rightarrow \infty,$ using   left
		inclusions in Lemma \ref{new2},  we have   $\displaystyle \sigma-\sum_{j=0}^{N-1} \sigma_{j} \in S_{\rho, \Lambda}^{m_N} \left(\mathbb{Z}^{n} \times \mathbb{T}^{n}\right) \subset M_{\rho,  \Lambda}^{m_0}\left(\mathbb{Z}^{n} \times \mathbb{T}^{n}\right) $ for a sufficiently	large \(N .\) Then we get \(\sigma(k, x) \in M_{\rho, \Lambda}^{m_0} \left(\mathbb{Z}^{n} \times \mathbb{T}^{n}\right) .\) Furthermore, for all \(N \geq 2\) and \(N^{\prime} \geq N\)
		$$
		\sigma-\sum_{j=0}^{N-1} \sigma_{j} =\sum_{j=N}^{N'-1} \sigma_{j}+r_{N'}
		$$
		with $r_{N'} \in S_{ \rho, \Lambda}^{m_{N'}}\left(\mathbb{Z}^{n} \times \mathbb{T}^{n}\right) $.  By choosing a sufficiently large $N'$ so that $m_{N'}<m_{N}-N_{0}$,
		we have $r_{N'} \in S_{ \rho,  \Lambda}^{m_{N}-N_0}\left(\mathbb{Z}^{n} \times \mathbb{T}^{n}\right) \subset M_{ \rho,  \Lambda}^{m_{N}}\left(\mathbb{Z}^{n} \times \mathbb{T}^{n}\right) $ and therefore  $\displaystyle \sigma-\sum_{j=0}^{N-1} \sigma_{j}  \in M_{\rho, 
			\Lambda}^{m_{N}}\left(\mathbb{Z}^{n} \times \mathbb{T}^{n}\right) $.  This completes the proof of the theorem.
	\end{proof}
	We now obtain the asymptotic formula composition of  two $M$-elliptic pseudo-differential operators.
	\begin{theorem} \label{4}
		Let $m_1, m_2 \in \mathbb{R}$. 	Let $\sigma  \in M_{\rho,  \Lambda}^{m_1}\left(\mathbb{Z}^{n} \times \mathbb{T}^{n}\right) $ and $\tau  \in M_{\rho,  \Lambda}^{m_2}\left(\mathbb{Z}^{n} \times \mathbb{T}^{n}\right) $. Then the composition   $T_\sigma T_\tau$ of  the pseudo-differential operators $T_\sigma$  and $T_\tau $ is  a pseudo-differential operator  with symbol $\lambda\in M_{\rho,  \Lambda}^{m_1+m_2}\left(\mathbb{Z}^{n} \times \mathbb{T}^{n}\right) $ having  asymptotic expansion
		
		\begin{align*}\lambda(k, x)& {\sim \sum_{\alpha \geq 0} \frac{1}{\alpha !}\left(D_{x}^{(\alpha)} \sigma (k, x)\right) \Delta_k^{\alpha} \tau(k, x)},	\end{align*}	where the asymptotic expansion means that for every $N \in \mathbb{N}$, we have $$\lambda(k, x)-\sum_{|\alpha|<N} \frac{1}{\alpha !}\left(D_{x}^{(\alpha)} \sigma (k, x)\right) \Delta_k^{\alpha} \tau(k, x)\in M_{\rho,  \Lambda}^{m_1+m_2-\rho{N}}\left(\mathbb{Z}^n\times \mathbb{T}^n\right). $$\end{theorem}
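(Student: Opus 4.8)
The plan is to compute the full symbol of the composition $T_\sigma T_\tau$ explicitly, extract the claimed asymptotic series by a periodic Taylor expansion in the frequency variable, and then upgrade the resulting $S_{\rho,\Lambda}$-level statement to the $M_{\rho,\Lambda}$-level using the algebraic stability properties from Section~\ref{secwe}. First I would apply the defining formula \eqref{s12} to $\tau$, take the discrete Fourier transform of $T_\tau f$ via $\widehat{T_\tau f}(y)=\sum_{l\in\mathbb{Z}^n}e^{-2\pi i l\cdot y}(T_\tau f)(l)$, and then apply $T_\sigma$. After interchanging the sum and the integrals one obtains $(T_\sigma T_\tau f)(k)=\int_{\mathbb{T}^n}e^{2\pi i k\cdot x}\lambda(k,x)\widehat{f}(x)\,dx$ with composed symbol
$$\lambda(k,x)=\sum_{l\in\mathbb{Z}^n}\int_{\mathbb{T}^n}e^{2\pi i(k-l)\cdot(y-x)}\,\sigma(k,y)\,\tau(l,x)\,dy.$$
The convergence of this sum-integral and the legitimacy of the interchange follow from the smoothness of $\sigma(k,\cdot)$ on $\mathbb{T}^n$: integrating by parts repeatedly in $y$ produces decay in $|k-l|$ which dominates the growth $\Lambda(l)^{m_2}$ of $\tau(l,x)$ when summed over $l\in\mathbb{Z}^n$.

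Next I would expand $\sigma(k,\cdot)$ about $x$ by the periodic Taylor formula, for which the operators $D_x^{(\alpha)}$ are designed: to order $N$ one has $\sigma(k,y)=\sum_{|\alpha|<N}\frac{1}{\alpha!}\bigl(e^{2\pi i(y-x)}-1\bigr)^{\alpha}D_x^{(\alpha)}\sigma(k,x)+R_N(k,x,y)$. The decisive point is that inserting the factor $\bigl(e^{2\pi i(y-x)}-1\bigr)^{\alpha}$ into the sum-integral above reproduces the difference operator $\Delta_k^\alpha$ acting on $\tau$: after the substitution $z=y-x$ and the binomial expansion of $\bigl(e^{2\pi i z}-1\bigr)^{\alpha}$, the character orthogonality $\int_{\mathbb{T}^n}e^{2\pi i(k-l+\beta)\cdot z}\,dz=\delta_{l,\,k+\beta}$ collapses the $z$-integral and the $l$-sum, leaving exactly $\sum_{\beta\le\alpha}(-1)^{|\alpha-\beta|}\binom{\alpha}{\beta}\tau(k+\beta,x)=\Delta_k^\alpha\tau(k,x)$ by identity~(i). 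Carrying this out for each $\alpha$ with $|\alpha|<N$ produces precisely the claimed main terms $\frac{1}{\alpha!}\bigl(D_x^{(\alpha)}\sigma\bigr)\Delta_k^\alpha\tau$, and the contribution of $R_N$ defines a remainder $r_N(k,x)$.

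The technical heart, and the step I expect to be the main obstacle, is the remainder estimate: one must show $r_N\in S_{\rho,\Lambda}^{m_1+m_2-\rho N}$. Here the Taylor remainder $R_N$ is controlled by $N$-th order differences of $\sigma$ in $y$, and one again integrates by parts to convert the growth of $\sigma$ (of order $m_1$) into the decay $\Lambda(k)^{m_2-\rho N}$ furnished by $\Delta_l^{\gamma}\tau$ through \eqref{eq1}; Peetre's inequality and the polynomial growth \eqref{growth} are used to replace $\Lambda(l)$ by $\Lambda(k)$ up to constants. Keeping track of the additional $D_x^{(\beta)}\Delta_k^{\alpha}$ derivatives uniformly throughout this estimate is the bookkeeping-intensive part of the argument.

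Once the remainder estimate is available the conclusion is clean. By Proposition~\ref{new3}(3) and Proposition~\ref{new3}(2), each term $\frac{1}{\alpha!}\bigl(D_x^{(\alpha)}\sigma\bigr)\Delta_k^\alpha\tau$ lies in $M_{\rho,\Lambda}^{m_1+m_2-\rho|\alpha|}$; grouping terms of equal length $|\alpha|=j$ gives symbols of orders $m_1+m_2-\rho j$ decreasing to $-\infty$, so Theorem~\ref{s5} produces a symbol $\widetilde{\lambda}\in M_{\rho,\Lambda}^{m_1+m_2}$ with $\widetilde{\lambda}\sim\sum_{\alpha}\frac{1}{\alpha!}\bigl(D_x^{(\alpha)}\sigma\bigr)\Delta_k^\alpha\tau$ in the $M_{\rho,\Lambda}$-sense. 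Writing both $\lambda$ and $\widetilde\lambda$ as their common partial sum plus their respective remainders, the $S$-level bound on $r_N$ and the $M$-level bound on the tail of $\widetilde\lambda$ give $\lambda-\widetilde{\lambda}\in S_{\rho,\Lambda}^{m_1+m_2-\rho N}$ for every $N$, hence $\lambda-\widetilde{\lambda}\in S_{\rho,\Lambda}^{-\infty}=M_{\rho,\Lambda}^{-\infty}$ by \eqref{eq8}. Therefore $\lambda\in M_{\rho,\Lambda}^{m_1+m_2}$, and combining the two remainder bounds shows $\lambda-\sum_{|\alpha|<N}\frac{1}{\alpha!}\bigl(D_x^{(\alpha)}\sigma\bigr)\Delta_k^\alpha\tau\in M_{\rho,\Lambda}^{m_1+m_2-\rho N}$ for every $N$, which is the asserted asymptotic expansion.
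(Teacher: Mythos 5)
Your proposal is correct and follows essentially the same route as the paper: derive the composed symbol $\lambda$ as a sum-integral, apply the periodic Taylor expansion to produce the terms $\frac{1}{\alpha!}\bigl(D_x^{(\alpha)}\sigma\bigr)\Delta_k^\alpha\tau$, place each term in $M_{\rho,\Lambda}^{m_1+m_2-\rho|\alpha|}$ via Proposition~\ref{new3}, and invoke Theorem~\ref{s5}. Your explicit treatment of the Taylor remainder $r_N$ and the comparison $\lambda-\widetilde{\lambda}\in S_{\rho,\Lambda}^{-\infty}$ is actually more careful than the paper's argument, which passes directly from the formal expansion to the conclusion without estimating the remainder.
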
 
	
	\begin{proof}
		Let $T_\sigma$  and $T_\tau $ be two   pseudo-differential operators  with symbols $\sigma  \in M_{\rho,  \Lambda}^{m_1}\left(\mathbb{Z}^{n} \times \mathbb{T}^{n}\right) $ and $\tau  \in M_{\rho,  \Lambda}^{m_2}\left(\mathbb{Z}^{n} \times \mathbb{T}^{n}\right) $ respectively. 
		Then, for  $f, g \in \mathcal{S}\left(\mathbb{Z}^{n}\right)$,  the pseudo-differential operators associated with  symbols $\sigma$ and $\tau$ are given by
		$$
		T_\sigma f(k)=\sum_{m \in \mathbb{Z}^{n}} \int_{\mathbb{T}^{n}} e^{2 \pi i(k-m) \cdot x} \sigma(k, x) f(m) \;dx$$
		and $$	T_\tau g(m)=\sum_{l \in \mathbb{Z}^{n}} \int_{\mathbb{T}^{n}} e^{2 \pi i(m-l) \cdot y} \tau(m, y) g(l) \;dy.
		$$
		Moreover, their composition  is given by 
		$$
		\begin{aligned}
			(T_\sigma T_\tau g)(k)=	T_\sigma (	T_\tau g)(k) &=\sum_{l \in \mathbb{Z}^{n}} \sum_{m \in \mathbb{Z}^{n}} \int_{\mathbb{T}^{n}} \int_{\mathbb{T}^{n}} e^{2 \pi i(k-m) \cdot x} \sigma(k, x) e^{2 \pi i(m-l) \cdot y} \tau(m, y) g(l) \;dy\;dx \\
			&=\sum_{l \in \mathbb{Z}^{n}} \int_{\mathbb{T}^{n}} e^{2 \pi i(k-l) \cdot y} \lambda(k, y) g(l) \;dy,
		\end{aligned}
		$$
		where the symbol $\lambda$ is given by 
		\begin{align}\label{eq9}\nonumber
			\lambda (k, y) &=\sum_{m \in \mathbb{Z}^{n}} \int_{\mathbb{T}^{n}} e^{2 \pi i(k-m) \cdot(x-y)} \sigma(k, x) \tau(m, y) \;dx \\\nonumber
			&=\sum_{m \in \mathbb{Z}^{n}} \int_{\mathbb{T}^{n}} e^{2 \pi i k \cdot(x-y)} e^{-2 \pi i m \cdot(x-y)} \sigma(k, x) \tau(m, y)  \;dx\\\nonumber
			&	=\int_{\mathbb{T}^{n}} e^{2 \pi i k \cdot(x-y)} \sigma(k, x) \widehat{\tau}(x-y, y) \mathrm{d} x \\
			&	=\int_{\mathbb{T}^{n}} e^{2 \pi i k \cdot x} \sigma(k, y+x) \widehat{\tau}(x, y) \;dx.
		\end{align}
		Here  $\widehat{\tau}$ denotes  the Fourier transform of $\tau(m, y)$ in the first variable. Using the periodic Taylor expansion (Theorem 3.4 of \cite{ruz1}),  we obtain
		$$
		\begin{aligned}
			\lambda(k, x) &=\int_{\mathbb{T}^{n}} e^{2 \pi i k \cdot y} \sigma(k, x+y) \widehat{\tau}(y, x) \;d y \\
			&=\int_{\mathbb{T}^{n}} e^{2 \pi i k \cdot y} \sum_{|\alpha|\geq 0} \frac{1}{\alpha !}\left(e^{2 \pi i y}-1\right)^{\alpha} D_{x}^{(\alpha)} \sigma(k, x) \widehat{\tau}(y, x)  \;d  y  \\
			&=\sum_{|\alpha|\geq 0} \frac{1}{\alpha !} D_{x}^{(\alpha)} \sigma(k, x) \Delta_{k}^{\alpha} \tau(k, x).
		\end{aligned}
		$$
		Since  $ \sigma(k, x) \in M_{ \rho, \Lambda}^{m_1}\left(\mathbb{Z}^{n} \times \mathbb{T}^{n}\right) $ and $\tau(k, x) \in M_{\rho,  \Lambda}^{m_2}\left(\mathbb{Z}^{n} \times \mathbb{T}^{n}\right) $,  from Proposition \ref{new3}, for every non negative  integer $N$,  we have 
		$$\sum_{|\alpha|=N } \frac{1}{\alpha !} D_{x}^{(\alpha)} \sigma (k, x) \Delta_k^{\alpha} \tau(k, x)\in M_{\rho,  \Lambda}^{m_1+m_2-\rho N}\left(\mathbb{Z}^{n} \times \mathbb{T}^{n}\right) .$$
		Considering $\sum_{|\alpha|=N } \frac{1}{\alpha !} D_{x}^{(\alpha)} \sigma (k, x) \Delta_k^{\alpha} \tau(k, x)$ as a sequence $\{m_N\}$ and applying Theorem \ref{s5} on it, we get $\lambda(k, x){\sim \sum_{\alpha \geq 0} \frac{1}{\alpha !} D_{x}^{(\alpha)} \sigma (k, x) \Delta_k^{\alpha} \tau(k, x)}\in M_{\rho,  \Lambda}^{m_1+m_2}\left(\mathbb{Z}^{n} \times \mathbb{T}^{n}\right) .$ This completes  the proof of the theorem.
	\end{proof}
	
	The adjoint $T^*$ of the operator $T$ on $\ell^2(\mathbb{ Z }^n)$  is defined by
	\begin{align*}	
		\langle T_{\sigma} f, g \rangle_{\ell^2(\mathbb{ Z }^n)} =\langle f, T_{\sigma}^*  g \rangle_{\ell^2(\mathbb{ Z }^n)}	.\end{align*}	
	In the next result we derive an asymptotic formula for symbol of the adjoint operator.
	\begin{theorem} \label{s6}
		Let $m\in \mathbb{R}$ and $\sigma  \in M_{\rho,  \Lambda}^{m}\left(\mathbb{Z}^{n} \times \mathbb{T}^{n}\right) . $ Then, the adjoint    $T_\sigma ^*$ of    $T_\sigma$   is  a pseudo-differential operator $T_{\sigma^*} $,   where the symbol $\sigma^* \in M_{\rho,  \Lambda}^{m}\left(\mathbb{Z}^{n} \times \mathbb{T}^{n}\right) $ having  asymptotic expansion
		\begin{align*}\sigma^*(k, x)& \sim \sum_{\alpha \geq 0} \frac{1}{\alpha !} \Delta_k^{\alpha}  D_{x}^{(\alpha)} \overline{ \sigma(k, x)}.
		\end{align*}
		Here the asymptotic expansion means, that for every $N \in \mathbb{N}$, we have 
		$$\sigma^*(k, x)-\sum_{|\alpha|<N}  \frac{1}{\alpha !} \Delta_k^{\alpha}  D_{x}^{(\alpha)} \overline{ \sigma(k, x)}\in M_{\rho,  \Lambda} ^{m-\rho N}\left(\mathbb{Z}^{n} \times \mathbb{T}^{n}\right) . $$\end{theorem}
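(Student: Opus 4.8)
The plan is to mirror the proof of Theorem \ref{4}: first rewrite the adjoint as an operator whose amplitude depends on the input variable, then reduce it to a genuine symbol in the output variable via the periodic Taylor expansion, and finally control the remainder to obtain the asymptotic expansion. First I would unwind the defining relation $\langle T_\sigma f, g\rangle_{\ell^2(\mathbb{Z}^n)}=\langle f, T_\sigma^* g\rangle_{\ell^2(\mathbb{Z}^n)}$ for $f,g\in\mathcal{S}(\mathbb{Z}^n)$. Writing $T_\sigma f(k)=\sum_{m}\int_{\mathbb{T}^n}e^{2\pi i(k-m)\cdot x}\sigma(k,x)f(m)\,dx$, interchanging the absolutely convergent sums and conjugating gives $T_\sigma^* g(k)=\sum_{l\in\mathbb{Z}^n}\int_{\mathbb{T}^n}e^{2\pi i(k-l)\cdot x}\overline{\sigma(l,x)}\,g(l)\,dx$, i.e.\ an amplitude operator whose amplitude $\overline{\sigma(l,x)}$ depends on the input $l$. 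Substituting $g(l)=\int_{\mathbb{T}^n}e^{2\pi i l\cdot y}\widehat g(y)\,dy$ and collecting the operator in the standard form $T_\sigma^* g(k)=\int_{\mathbb{T}^n}e^{2\pi i k\cdot y}\sigma^*(k,y)\widehat g(y)\,dy$ identifies the adjoint symbol as
\[
\sigma^*(k,y)=\sum_{l\in\mathbb{Z}^n}\int_{\mathbb{T}^n}e^{2\pi i(k-l)\cdot(x-y)}\,\overline{\sigma(l,x)}\,dx.
\]

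Next I would expand $\overline{\sigma(l,x)}$ by the periodic Taylor formula (Theorem 3.4 of \cite{ruz1}) in the frequency variable about $y$, namely $\overline{\sigma(l,x)}=\sum_{|\alpha|<N}\frac{1}{\alpha!}(e^{2\pi i(x-y)}-1)^\alpha (D_x^{(\alpha)}\overline\sigma)(l,y)+R_N(l,x,y)$, and insert it into the formula for $\sigma^*$. For each term, translation invariance of the Haar measure on $\mathbb{T}^n$ together with $(e^{2\pi i u}-1)^\alpha=\sum_{\beta\le\alpha}\binom{\alpha}{\beta}(-1)^{|\alpha-\beta|}e^{2\pi i\beta\cdot u}$ reduces the $x$-integral to $\int_{\mathbb{T}^n}e^{2\pi i(k-l)\cdot u}(e^{2\pi i u}-1)^\alpha\,du=\sum_{\beta\le\alpha}\binom{\alpha}{\beta}(-1)^{|\alpha-\beta|}\delta_{l,\,k+\beta}$. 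Summing over $l$ and invoking formula (i) for the finite difference then collapses the $\alpha$-term to $\frac{1}{\alpha!}\Delta_k^\alpha D_y^{(\alpha)}\overline{\sigma(k,y)}$, which gives the claimed series after renaming $y$ to $x$.

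Finally, to upgrade this formal identity to the asserted asymptotic expansion I would argue as follows. Since the defining estimates of $M_{\rho,\Lambda}^m$ are invariant under complex conjugation (the $D_x^{(\beta)}$ are finite linear combinations of ordinary $x$-derivatives, which commute with conjugation, and $\Delta_k^\alpha$ has real coefficients), we have $\overline\sigma\in M_{\rho,\Lambda}^m(\mathbb{Z}^n\times\mathbb{T}^n)$, so by Proposition \ref{new3} each term $\frac{1}{\alpha!}\Delta_k^\alpha D_x^{(\alpha)}\overline\sigma$ lies in $M_{\rho,\Lambda}^{m-\rho|\alpha|}(\mathbb{Z}^n\times\mathbb{T}^n)$; viewing these as a sequence of strictly decreasing order and applying Theorem \ref{s5} produces a symbol in $M_{\rho,\Lambda}^m$ carrying exactly this expansion. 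The hard part will be the remainder estimate: one must show that inserting $R_N$ into $\sum_l\int_{\mathbb{T}^n}$ yields a symbol of order $m-\rho N$. The gain comes precisely from the $N$-th order Taylor remainder carrying factors $(e^{2\pi i(x-y)}-1)$ of total order $N$, which after integration against $e^{2\pi i(k-l)\cdot(x-y)}$ and summation in $l$ become $N$ finite differences $\Delta_k$ applied to $\overline\sigma$, each lowering the order by $\rho$ via Proposition \ref{new3}(3). Absolute convergence of the $l$-sum and legitimacy of the interchanges follow from the smoothness of $\sigma(l,\cdot)$, which forces the $x$-integrals to decay faster than any power of $1+|k-l|$ with constants controlled by the $M_{\rho,\Lambda}^m$-seminorms, while Peetre's inequality (as in Proposition \ref{eq5}) replaces the shifted weights $\Lambda(k+\beta)$ by $\Lambda(k)$. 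Combining the explicit terms with this remainder gives, for every $N\in\mathbb{N}$,
\[
\sigma^*(k,x)-\sum_{|\alpha|<N}\frac{1}{\alpha!}\Delta_k^\alpha D_x^{(\alpha)}\overline{\sigma(k,x)}\in M_{\rho,\Lambda}^{m-\rho N}(\mathbb{Z}^n\times\mathbb{T}^n),
\]
which completes the proof.
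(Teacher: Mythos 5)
Your proposal follows the same route as the paper's proof: unwind the defining relation of the adjoint to identify $\sigma^*(k,y)=\sum_{l}\int_{\mathbb{T}^n}e^{2\pi i(k-l)\cdot(x-y)}\overline{\sigma(l,x)}\,dx$, apply the periodic Taylor expansion to obtain the formal series $\sum_\alpha \frac{1}{\alpha!}\Delta_k^\alpha D_x^{(\alpha)}\overline{\sigma(k,x)}$, and then invoke Proposition \ref{new3} together with the asymptotic-sum Theorem \ref{s5} to place each term in $M_{\rho,\Lambda}^{m-\rho|\alpha|}$ and conclude. Your version is correct and in fact supplies more detail than the paper (the explicit Kronecker-delta collapse of the $\alpha$-term and the conjugation-invariance of the symbol class), so no further comment is needed.
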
 
	
	\begin{proof}
		From the definition of adjoint, we have 	\begin{align*}	
			\langle T_{\sigma} f, g \rangle_{\ell^2(\mathbb{ Z }^n)} =\langle f, T_{\sigma}^*  g \rangle_{\ell^2(\mathbb{ Z }^n)}	.\end{align*}	
		Now 	$$
		\begin{aligned}
			\langle T_{\sigma} f, g \rangle_{\ell^2(\mathbb{ Z }^n)}  &=\sum_{k \in \mathbb{Z}^{n}} \sum_{l \in \mathbb{Z}^{n}} \int_{\mathbb{T}^{n}} e^{2 \pi i(k-l) \cdot y} \sigma(k, y) f(l) \overline{g(k)} \;{d} y \\
			&=\sum_{l \in \mathbb{Z}^{n}} f(l)\left(\overline{\sum_{k \in \mathbb{Z}^{n}} \int_{\mathbb{T}^{n}} e^{-2 \pi i(k-l) \cdot y} \overline{\sigma(k, y)} g(k)} \;{d} y \right)=\langle f, T_{\sigma}^*  g \rangle_{\ell^2(\mathbb{ Z }^n)}.
		\end{aligned}
		$$
		Thus 
		$$
		T_{\sigma}^*g(k)=\sum_{l\in \mathbb{Z}^{n}} \int_{\mathbb{T}^{n}} e^{2 \pi i(k-l) \cdot y} \overline{\sigma(l, y)} g(l)\; {d} y=	T_{\sigma^*}g(k).
		$$
		Using   the periodic Taylor expansion,  similarly as in    (\ref{eq9}),  the symbol $\sigma^*$ of $	T_{\sigma}^*$ is given by 
		\begin{align*}
			\sigma^*(k, x) &=\sum_{l \in \mathbb{Z}^{n}} \int_{\mathbb{T}^{n}} e^{2 \pi i(k-l) \cdot(y-x)} \overline{\sigma(l, y)}    \;dy\\
			&\sim \sum_{\alpha \geq 0} \frac{1}{\alpha !} \Delta_k^{\alpha}  D_{x}^{(\alpha)} \overline{ \sigma(k, x)}.
		\end{align*}
		Since  $ \sigma (k, x) $ is in $M_{ \rho, \Lambda}^{m}\left( \mathbb{Z}^n \times \mathbb{T}^n \right)$,     form Proposition \ref{new3}, for every non negative  integer $N$,  we have 
		$$\sum_{|\alpha|=N } \frac{1}{\alpha !}\Delta_k^{\alpha}  D_{x}^{(\alpha)} \overline{ \sigma(k, x)} \in M_{\rho,  \Lambda}^{m-\rho N}\left(\mathbb{Z}^{n} \times \mathbb{T}^{n}\right) .$$
		Finally,   considering $\sum_{|\alpha|=N } \frac{1}{\alpha !} \Delta_k^{\alpha}  D_{x}^{(\alpha)} \overline{ \sigma(k, x)}$ as a sequence $\{m_N\}$ and applying Theorem \ref{s5} on it, we get $\sigma^*(k, x)\sim \sum_{\alpha \geq 0} \frac{1}{\alpha !} \Delta_k^{\alpha}  D_{x}^{(\alpha)} \overline{ \sigma(k, x)} \in M_{\rho,  \Lambda}^{m}\left(\mathbb{Z}^{n} \times \mathbb{T}^{n}\right) .$ This completes  the proof of the theorem.
	\end{proof}
	For $f, g \in \mathcal{S}\left(\mathbb{Z}^{n}\right)$, we recall that the transpose $T^{t}$ of a linear operator $T$ is given by the distributional duality
	$$
	\left\langle T^{t} f, g\right\rangle_{\ell^2(\mathbb{ Z }^n)} =\langle f, T g\rangle_{\ell^2(\mathbb{ Z }^n)} ,
	$$
	which means that  for all $k \in \mathbb{Z}^{n},$ we have
	$$
	\sum_{k \in \mathbb{Z}^{n}}\left(T^{t} f\right)(k) g(k)=\sum_{k \in \mathbb{Z}^{n}} f(k)(T g)(k).
	$$
	\begin{theorem}[Transpose operators] Let $T_\sigma$ be a pseudo-differential operator with symbol $\sigma \in M_{\rho, \Lambda}^{m}\left(\mathbb{Z}^{n} \times \mathbb{T}^{n}\right) , m\in \mathbb{R}.$ Then the transpose operator $T_\sigma^t$ of $T_\sigma$  is also a pseudo-differential operator with symbol $\sigma ^t\in M_{\rho, \Lambda}^{m}\left(\mathbb{Z}^{n} \times \mathbb{T}^{n}\right) $. Moreover,  it has the following assymptotic expression 
		
		$$
		\sigma^{t}(k, x) \sim \sum_{\alpha} \frac{1}{\alpha !} \Delta_{k}^{\alpha} D_{x}^{(\alpha)} \sigma(k,-x),
		$$
		where the asymptotic expansion means that for every $N \in \mathbb{N}$, we have 
		$$\sigma^t(k, x)-\sum_{|\alpha|<N}  \frac{1}{\alpha !} \Delta_k^{\alpha}  D_{x}^{(\alpha)}\sigma(k, -x)\in M_{\rho,  \Lambda} ^{m-\rho N}\left(\mathbb{Z}^{n} \times \mathbb{T}^{n}\right) . $$\end{theorem}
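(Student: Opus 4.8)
The plan is to mimic the structure used for the adjoint operator in Theorem \ref{s6}, since the transpose is the ``un-conjugated'' analogue of the adjoint. First I would start from the distributional definition of the transpose and unwind it to obtain an explicit integral formula for $\sigma^t$. Concretely, writing out $\langle T_\sigma^t f, g\rangle_{\ell^2(\mathbb{Z}^n)} = \langle f, T_\sigma g\rangle_{\ell^2(\mathbb{Z}^n)}$ and expanding $T_\sigma g(k)$ via the defining formula \eqref{s12}, I would interchange the (absolutely convergent, since $f,g \in \mathcal{S}(\mathbb{Z}^n)$) sums over $\mathbb{Z}^n$ and the integral over $\mathbb{T}^n$, then read off the kernel. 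This should yield
$$
T_\sigma^t f(k) = \sum_{l \in \mathbb{Z}^n} \int_{\mathbb{T}^n} e^{2\pi i (k-l)\cdot y}\, \sigma(l,-y)\, f(l)\; dy,
$$
so that $T_\sigma^t = T_{\sigma^t}$ with
$$
\sigma^t(k,x) = \sum_{l \in \mathbb{Z}^n} \int_{\mathbb{T}^n} e^{2\pi i (k-l)\cdot(y-x)}\, \sigma(l,-y)\; dy.
$$
The appearance of $\sigma(l,-y)$ (rather than $\overline{\sigma(l,y)}$ as in the adjoint case) is the only structural difference, and it accounts for the $\sigma(k,-x)$ appearing in the claimed expansion.

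Next I would extract the asymptotic expansion. Exactly as in \eqref{eq9} and in the proof of Theorem \ref{s6}, I would treat $\widehat{\sigma(\cdot\,,-\,\cdot)}$ as a Fourier transform in the first variable and apply the periodic Taylor expansion (Theorem 3.4 of \cite{ruz1}), expanding $e^{2\pi i y}\mapsto \sum_\alpha \tfrac{1}{\alpha!}(e^{2\pi i y}-1)^\alpha D_x^{(\alpha)}$ and using the identification of $(e^{2\pi i y}-1)^\alpha$-moments with the difference operators $\Delta_k^\alpha$. This produces the formal series
$$
\sigma^t(k,x) \sim \sum_{\alpha \geq 0}\frac{1}{\alpha!}\, \Delta_k^\alpha D_x^{(\alpha)}\sigma(k,-x).
$$

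Finally I would justify that this formal series is a genuine asymptotic expansion in the sense of Theorem \ref{s5}. Here I would invoke Proposition \ref{new3}: each summand $\tfrac{1}{\alpha!}\Delta_k^\alpha D_x^{(\alpha)}\sigma(k,-x)$ lies in $M_{\rho,\Lambda}^{m-\rho|\alpha|}(\mathbb{Z}^n\times\mathbb{T}^n)$ (the reflection $x\mapsto -x$ is harmless since it does not affect the symbol estimates in Proposition \ref{eq5}, and $D_x^{(\alpha)}$ applied after the reflection only changes signs of the frequency derivatives). Grouping the terms with $|\alpha|=N$ as the $N$-th element of a sequence with orders $m_N = m-\rho N \to -\infty$, Theorem \ref{s5} produces a symbol $\sigma^t \in M_{\rho,\Lambda}^{m}(\mathbb{Z}^n\times\mathbb{T}^n)$ realizing this asymptotic sum, and by construction the remainder after $N$ terms lies in $M_{\rho,\Lambda}^{m-\rho N}(\mathbb{Z}^n\times\mathbb{T}^n)$, as required.

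The main obstacle I anticipate is purely bookkeeping: correctly tracking the sign change $y \mapsto -y$ through the Fourier-transform/Taylor-expansion step so that the reflected argument $\sigma(k,-x)$ emerges cleanly, and confirming that this reflection does not disturb membership in the symbol class. All the analytic heavy lifting — convergence of the series, remainder estimates — is inherited verbatim from Theorem \ref{s5} and Proposition \ref{new3}, so no new estimates are needed beyond observing that the symbol-class estimates of Proposition \ref{eq5} are invariant under $x\mapsto -x$.
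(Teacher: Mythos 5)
Your proposal is correct and follows essentially the same route as the paper's own proof: derive the kernel of $T_\sigma^t$ from the duality pairing (yielding the reflected symbol $\sigma(l,-y)$), apply the periodic Taylor expansion as in \eqref{eq9} to obtain the formal series, and then invoke Proposition \ref{new3} together with Theorem \ref{s5} to upgrade it to a genuine asymptotic expansion in $M_{\rho,\Lambda}^{m}\left(\mathbb{Z}^{n} \times \mathbb{T}^{n}\right)$. No substantive differences from the paper's argument.
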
 
	
	\begin{proof}
		
		From the definition of transpose, we have 		$$
		\sum_{k \in \mathbb{Z}^{n}}\left(T^{t} f\right)(k) g(k)=\sum_{k \in \mathbb{Z}^{n}} f(k)(T g)(k).
		$$
		Now 	$$
		\begin{aligned}
			\sum_{k \in \mathbb{Z}^{n}} f(k)(T g)(k)&=\sum_{k \in \mathbb{Z}^{n}} \sum_{l \in \mathbb{Z}^{n}} \int_{\mathbb{T}^{n}} f(k)e^{2 \pi i(k-l) \cdot y} \sigma(k, y) g(l)  \;{d} y \\
			&= \sum_{l \in \mathbb{Z}^{n}} g(l) \left( \sum_{k \in \mathbb{Z}^{n}} \int_{\mathbb{T}^{n}} f(k)e^{2 \pi i(k-l) \cdot y} \sigma(k, y)   \;{d} y\right)\\
			&=\sum_{l \in \mathbb{Z}^{n}}  g(l)\left(T^{t} f\right)(l).
		\end{aligned}
		$$
		Thus 
		\begin{align*}
			\left(T^{t} f\right)(l)&= \sum_{k \in \mathbb{Z}^{n}} \int_{\mathbb{T}^{n}} f(k)e^{2 \pi i(k-l) \cdot y} \sigma(k, y)   \;{d} y\\&	=
			\sum_{k \in \mathbb{Z}^{n}} \int_{\mathbb{T}^{n}} f(k)e^{2 \pi i(l-k) \cdot y} \sigma(k, -y)   \;{d} y=	T_{\sigma^t}g(l).
		\end{align*}
		Using   the periodic Taylor expansion,  similarly as in    (\ref{eq9}),  the symbol $\sigma^t$ of $	T_{\sigma}^t$ is given by 
		\begin{align*}
			\sigma^t(k, x) &=\sum_{l \in \mathbb{Z}^{n}} \int_{\mathbb{T}^{n}} e^{2 \pi i(k-l) \cdot(y-x)} {\sigma(l, -y)}    \;dy\\
			&\sim \sum_{\alpha \geq 0} \frac{1}{\alpha !} \Delta_k^{\alpha}  D_{x}^{(\alpha)} { \sigma(k, -x)}.
		\end{align*}
		Since  $ \sigma (k, x) $ is in $M_{ \rho, \Lambda}^{m}\left(\mathbb{Z}^{n} \times \mathbb{T}^{n}\right) $,      form Proposition \ref{new3}, for every non negative  integer $N$,  we  get 
		$$\sum_{|\alpha|=N } \frac{1}{\alpha !}\Delta_k^{\alpha}  D_{x}^{(\alpha)} { \sigma(k, -x)} \in M_{\rho,  \Lambda}^{m-\rho N}\left(\mathbb{Z}^{n} \times \mathbb{T}^{n}\right) .$$
		Finally, considering $\sum_{|\alpha|=N } \frac{1}{\alpha !} \Delta_k^{\alpha}  D_{x}^{(\alpha)}{ \sigma(k, -x)}$ as a sequence $\{m_N\}$ and applying Theorem \ref{s5} on it, we get $\sigma^t(k, x)\sim \sum_{\alpha \geq 0} \frac{1}{\alpha !} \Delta_k^{\alpha}  D_{x}^{(\alpha)}  { \sigma(k, -x)} \in M_{\rho,  \Lambda}^{m}\left(\mathbb{Z}^{n} \times \mathbb{T}^{n}\right) .$ This completes  the proof of the theorem.
	\end{proof}
	
	\begin{definition}
		[M-elliptic] A symbol $\sigma \in M_{\rho,  \Lambda}^{m}\left( \mathbb{Z}^n \times \mathbb{T}^n \right) $  is said to be $M$-elliptic if there exist positive constants $C$ and $R_1$ such that
		\begin{align*}
			|\sigma(k, x)| \geq C \Lambda(k)^{m},
		\end{align*}
		for all $ x\in \mathbb{T}^n$ and all $ k \in \mathbb{Z}^n$ with $|k|  \geq R_1.$
	\end{definition}

	\begin{proposition}\label{3}
		For $m_1, m_2\in \mathbb{R},$ let $\sigma \in M_{\rho, \Lambda }^{m_1}\left(\mathbb{Z}^{n} \times \mathbb{T}^{n}\right) $ and $\tau \in M_{\rho, \Lambda }^{m_2}\left(\mathbb{Z}^{n} \times \mathbb{T}^{n}\right) $ be $M$-elliptic symbols. Then $\frac{\sigma}{\tau} \in M_{\rho, \Lambda }^{m_1-m_2}\left(\mathbb{Z}^{n} \times \mathbb{T}^{n}\right) $ .  In particular, $\frac{1}{\tau} \in   M_{\rho, \Lambda }^{-m_2}\left(\mathbb{Z}^{n} \times \mathbb{T}^{n}\right) $.
	\end{proposition}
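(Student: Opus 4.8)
The plan is to reduce the whole statement to the single reciprocal estimate
\[
\frac1\tau\in M_{\rho,\Lambda}^{-m_2}\left(\mathbb{Z}^{n}\times\mathbb{T}^{n}\right),
\]
and then close the argument by a multiplication. Indeed, once the reciprocal is controlled, part (2) of Proposition \ref{new3} gives
\[
\frac{\sigma}{\tau}=\sigma\cdot\frac1\tau\in M_{\rho,\Lambda}^{m_1}\cdot M_{\rho,\Lambda}^{-m_2}\subseteq M_{\rho,\Lambda}^{m_1-m_2}\left(\mathbb{Z}^{n}\times\mathbb{T}^{n}\right),
\]
which is precisely the assertion, the ``in particular'' statement being exactly the reciprocal case proved along the way. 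So the proof concentrates entirely on $1/\tau$. First I would make sense of $1/\tau$ globally: by $M$-ellipticity there are $C,R_1>0$ with $|\tau(k,x)|\ge C\Lambda(k)^{m_2}$ for $|k|\ge R_1$, so $\tau$ is bounded away from zero outside the finite set $\{|k|<R_1\}$. Modifying $\tau$ on this finite set of $k$ changes the symbol only within $\bigcap_m M_{\rho,\Lambda}^m=S_{\rho,\Lambda}^{-\infty}$ (Lemma \ref{new2}) and hence does not affect membership in $M_{\rho,\Lambda}^{-m_2}$; thus I may assume $\tau(k,x)\ne0$ everywhere, so that $r:=1/\tau$ is well defined and smooth in $x$, with the pointwise bound $|r(k,x)|\le C^{-1}\Lambda(k)^{-m_2}$.

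The core is to verify the characterization in part (3) of Proposition \ref{eq5} for $r$, namely that for all $\alpha,\beta$ and all $\gamma\in\{0,1\}^n$,
\[
\bigl|k^\gamma D_{x}^{(\beta)}\Delta_{k}^{\alpha+\gamma}r(k,x)\bigr|\le C_{\alpha,\beta,\gamma}\,\Lambda(k)^{-m_2-\rho|\alpha|}.
\]
The mechanism is the recursion obtained by differencing the identity $\tau\,r\equiv1$. Applying $D_{x}^{(\beta)}\Delta_{k}^{\alpha+\gamma}$ to $\tau r=1$ and expanding by the discrete Leibniz formula (ii) together with the Leibniz rule for $D_{x}^{(\beta)}$, the one term in which every difference and derivative lands on $r$ equals $\tau\cdot D_{x}^{(\beta)}\Delta_{k}^{\alpha+\gamma}r$; isolating it and dividing by $\tau$ expresses $D_{x}^{(\beta)}\Delta_{k}^{\alpha+\gamma}r$ as a finite sum of products of a difference/derivative of $\tau$ with a \emph{strictly lower order} difference/derivative of $r$ at shifted arguments, each multiplied by $1/\tau$. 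Multiplying through by $k^\gamma$ and inducting on the total order $|\alpha|+|\beta|$ then delivers the bound: the factor $1/|\tau|$ contributes $\Lambda^{-m_2}$, the $\tau$-factor contributes the estimate for $\tau$, and the $r$-factor contributes the inductive estimate, while the shifts in $k$ are absorbed by Peetre's inequality (Proposition 3.3.31 of \cite{ruz1}) and the polynomial growth \eqref{growth} of $\Lambda$, so that the exponents add up exactly to $-m_2-\rho|\alpha|$.

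The delicate point — and what I expect to be the main obstacle — is the bookkeeping of the weight factor $k^\gamma$: a crude $S_{\rho,\Lambda}^{-m_2}$ bound is \emph{not} enough, since pairing $k^\gamma$ (growth $\Lambda^{|\gamma|/\mu_0}$) with the generic loss $\Lambda^{-\rho|\gamma|}$ coming from $\Delta^\gamma$ leaves a surplus $(1/\mu_0-\rho)|\gamma|\ge0$. One must therefore distribute the components of $\gamma\in\{0,1\}^n$ across the factors in the Leibniz expansion so that each piece of $k^\gamma$ is matched with a corresponding $\Delta^\gamma$ acting on either $\tau$ or $r$, invoking the genuine $M$-estimate (Proposition \ref{eq5}(3)) for $\tau$ and the inductive $M$-estimate for $r$ in place of the weaker $S$-estimate. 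Carrying out this distribution carefully is exactly what upgrades the reciprocal from $S_{\rho,\Lambda}^{-m_2}$ to $M_{\rho,\Lambda}^{-m_2}$, and it is the only step that genuinely uses both the $M$-structure and the $M$-ellipticity of $\tau$.
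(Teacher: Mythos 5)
Your proposal is correct, and it reaches the conclusion by a genuinely different organization than the paper. The paper attacks the quotient $\sigma/\tau$ head-on: it sets up an explicit quotient-rule recursion $(\sigma_{\alpha,\beta},\tau_{\alpha,\beta})$ with $D_x^{(\beta)}\Delta_k^{\alpha}(\sigma/\tau)=\sigma_{\alpha,\beta}/\tau_{\alpha,\beta}$, proves by induction the two-sided bounds $|\sigma_{\alpha,\beta}|\lesssim\Lambda^{(2^{|\alpha+\beta|}-1)m_2+m_1-\rho|\alpha|}$ and $|\tau_{\alpha,\beta}|\approx\Lambda^{2^{|\alpha+\beta|}m_2}$ (the lower bound being where $M$-ellipticity of $\tau$ enters), and handles the weight $k^\gamma$ by an outer Leibniz expansion together with the estimate $|\Delta_k^{\eta}k^{\gamma}|\le C\Lambda(k)^{\rho|\gamma-\eta|}$ quoted from \cite{kal}. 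You instead prove only the reciprocal case $1/\tau\in M_{\rho,\Lambda}^{-m_2}$ by Leibniz-inverting the identity $\tau\cdot(1/\tau)=1$ and inducting on the total order, and then obtain the general quotient from the product rule of Proposition \ref{new3}(2); your distribution of the components of $\gamma$ between the $\tau$-factor and the $(1/\tau)$-factor does the same bookkeeping as the paper's $|\Delta_k^{\eta}k^{\gamma}|$ estimate, and the exponents close correctly since for each $j$ with $\gamma_j=1$ at least one of the two Leibniz factors carries a difference in the $j$-th direction. Your route buys two things: it avoids the exponentially growing intermediate exponents $2^{|\alpha+\beta|}$ by keeping every object at its natural order, and it explicitly disposes of the finite set $\{|k|<R_1\}$ where $\tau$ may vanish by an $S_{\rho,\Lambda}^{-\infty}$ modification --- a point the paper's proof passes over silently (it only acknowledges the issue informally after Theorem \ref{5}). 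The paper's route, in exchange, delivers the quotient in a single pass without needing the reciprocal as an intermediate step. Note also that, like the paper's own argument, yours never uses the $M$-ellipticity of $\sigma$, only that of $\tau$; that hypothesis in the statement is superfluous.
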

	\begin{proof}
		In order to prove  the theorem, 	it is enough to show    that for any $m_1, m_2 \in \mathbb{R}$,  there exists a positive constant $C_{\alpha, \beta, \gamma}$ such that
		$$
		\left|D_{x}^{(\beta)} \Delta_{k}^{\alpha}  \left(k^{\gamma} \Delta_{k}^{\gamma}\left(\frac{\sigma}{\tau}\right)\right)(k, x)\right| \leq C_{\alpha, \beta, \gamma} \Lambda(k)^{m_1-m_2-\rho|\alpha|}, \quad (k,  x) \in  \mathbb{Z}^{n}  \times \mathbb{T}^{n}
		$$
		for every multi-indices $\alpha, \beta \in \mathbb{N}_{0}^{n}$ and $\gamma \in\{0,1\}^{n}.$ 	Using the  Leibnitz formula,  we get  
		
		\begin{align}\label{s14}\nonumber
			\left|D_{x}^{(\beta)} \Delta_{k}^{\alpha}  \left(k^{\gamma} \Delta_{k}^{\gamma}\left(\frac{\sigma}{\tau}\right)\right)(k, x) \right| &=\left|  D_{x}^{(\beta)} \left(\sum_{\eta \leq \alpha}\left(\begin{array}{l}\alpha \\\eta\end{array}\right)\left(\Delta_{k}^{\eta} k^{\gamma}\right)\left({\Delta}_{k}^{\alpha-\eta} \Delta_{k}^{\gamma}\left(\frac{\sigma}{\tau}\right)\right)(k+\eta)\right)\right| \\\nonumber
			&=\left|  \sum_{\eta \leq \alpha}\left(\begin{array}{l}
				\alpha \\
				\eta
			\end{array}\right)\left(\Delta_{k}^{\eta} k^{\gamma}\right) \left(D_{x}^{(\beta)}  {\Delta}_{k}^{\alpha-\eta} \Delta_{k}^{\gamma}\left(\frac{\sigma}{\tau}\right)\right)(k+\eta)\right| \\
			& \leq \sum_{\eta \leq \alpha} \left( \begin{array}{l}
				\alpha \\
				\eta
			\end{array}\right)\left|\Delta_{k}^{\eta} k^{\gamma}\right|\left|\left(D_{x}^{(\beta)}  {\Delta}_{k}^{\alpha-\eta+\gamma}\left(\frac{\sigma}{\tau}\right)\right)(k+\eta)\right|.
		\end{align}
		From   the relation (3.14) of \cite{kal}, we have 
		$$ 	\left|\Delta_{k}^{\eta}k^{\gamma}\right| \leq C_{\alpha, \gamma} \Lambda(k)^{\rho|\gamma-\eta|}, \quad k \in \mathbb{Z}^n	$$ 	for every multi-indices $ \eta \in \mathbb{N}_0^n$ and $  \gamma \in \{0, 1\}^n$. 
		The only part    remains  is to  estimate  $\left|\left(D_{x}^{(\beta)}  {\Delta}_{k}^{\alpha-\eta+\gamma}\left(\frac{\sigma}{\tau}\right)\right)(k+\eta)\right| $. By the property of   familiar differential operator we know that
		$$
		D_{x_i}\frac{\sigma(k, x)}{\tau(k, x)}=\frac{\tau(k, x) D_{x_i}\sigma(k, x)-\sigma(k, x) D_{x_i}\tau(k, x)}{\tau(k, x)^2}.
		$$
		The action of the difference operator is quite similar to the above and one can see that 
		$$
		\Delta_{k_{j}} \frac{\sigma(k, x)}{\tau(k, x)}=\frac{\tau(k, x) \Delta_{{k}_j} \sigma(k, x)-\sigma(k, x) \Delta_{k_j} \tau(k, x)}{\tau(k, x) \tau\left(k+v_{j}, x\right)}.
		$$
		Let us define the iterative sequence as following: for all multi-indices $|\alpha| \geq 1, |\beta| \geq 1$ we let  
		$$
		\left\{\begin{array}{l}
			\sigma_{\alpha, \beta+v_{i}}(k, x)=\tau_{\alpha, \beta}(k, x) D_{x_i}\sigma_{\alpha, \beta}(k, x)-\sigma_{\alpha, \beta}(k, x) D_{x_i}\tau_{\alpha, \beta}(k, x) \\
			\tau_{\alpha, \beta+v_{i}}(k, x)=\tau_{\alpha, \beta}(k, x)^{2} \\
			\sigma_{\alpha+v_{j}, \beta}(k, x)=\tau_{\alpha, \beta}(k, x) \Delta_{{k}_j} \sigma_{\alpha, \beta}(k, x)-\sigma_{\alpha, \beta}(k, x) \Delta_{{k}_j}\tau_{\alpha, \beta}(k, x) \\
			\tau_{\alpha+v_{j}, \beta}(k, x)=\tau_{\alpha, \beta}(k, x) \tau_{\alpha, \beta}\left(k+v_{j}, x\right)
		\end{array}\right.
		$$
		with $\sigma_{0,0}:=\sigma$ and $\tau_{0,0}:=\tau$.   Here $v_i$ always takes value 1 but operation will act with respect to  $i$-th variable. 
		From Proposition \ref{new3},  we note that $\sigma_{\alpha, \beta}$ and $\tau_{\alpha, \beta}$ are the  symbols of  pseudo-differential operators  on $\mathbb{ Z}^n$ and it satisfies 
		$$
		D_{x}^{(\beta)}  {\Delta}_{k}^{\alpha}\frac{\sigma(k, x)}{\tau(k, x)}=\frac{\sigma_{\alpha, \beta}(k, x)}{\tau_{\alpha, \beta}(k, x)}.
		$$
		Therefore by induction on both $\alpha$ and $\beta$, we obtain 
		$$
		\left\{\begin{array}{l}
			|\sigma_{\alpha, \beta}(k, x) |\leq C_{1} \Lambda(k)^{(2^{|\alpha+\beta|}-1) m_2+m_1-\rho|\alpha|} \\
			C_{2} \Lambda(k)^{2^{|\alpha+\beta| }m_2} \leq\left|\tau_{\alpha, \beta}(k, x)\right| \leq C_{3} \Lambda(k)^{2^{|\alpha+\beta|}m_2},
		\end{array}\right.
		$$
		where $C_{1}, C_{2}$ and $C_{3}$ are appropriate constants depends on $\alpha$ and $\beta$. Therefore from the relation  (\ref{s14}),  we get 
		\begin{align*}
			\left|D_{x}^{(\beta)} \Delta_{k}^{\alpha}  \left(k^{\gamma} \Delta_{k}^{\gamma}\left(\frac{\sigma}{\tau}\right)\right)(k, x) \right| &\leq \frac{C_{1} C_{\alpha, \gamma}}{C_{3} }\sum_{\eta \leq \alpha} \left( \begin{array}{l}
				\alpha \\
				\eta
			\end{array}\right) \Lambda(k)^{(2^{|\alpha-\eta+\gamma+\beta|}-1) m_2+m_1-\rho|\alpha-\eta+\gamma|}\\&\qquad   
			\times  \Lambda(k)^{-(2^{|\alpha-\eta+\gamma+\beta|}) m_2}\Lambda(k)^{\rho|\gamma-\eta|} \\
			&\leq  C_{\alpha, \beta, \gamma}\Lambda(k)^{m_1-m_2-\rho|\alpha|},
		\end{align*}
		where $ C_{\alpha, \beta, \gamma}$ is a constant depends only on $ \alpha, \beta$ and $ \gamma$. This completes the proof of the theorem.

	\end{proof}
	\begin{theorem}\label{5}
		Let  $\sigma \in M_{\rho, \Lambda}^m(\mathbb{Z}^n\times \mathbb{T}^n)$ be  $M$-elliptic. Then 
		there exist a symbol $\tau \in M_{\rho, \Lambda}^{-m}(\mathbb{Z}^n\times \mathbb{T}^n)$ such that $$T_\tau T_\sigma=I+R$$ and $$ T_\sigma T_\tau=I+S,$$ where $R$ and $S$ are  pseudo differential operators  with symbols in $  M_{\rho, \Lambda}^{-\infty}\left( \mathbb{Z}^{n} \times \mathbb{T}^{n} \right)$. 
	\end{theorem}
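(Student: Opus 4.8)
The plan is to construct $\tau$ by the standard iterative parametrix scheme, exploiting $M$-ellipticity to invert the principal symbol and the asymptotic summation of Theorem \ref{s5} to absorb the successive lower-order errors into a single symbol.

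First I would fix a cutoff $\psi \in C^\infty(\mathbb{Z}^n)$ with $\psi(k)=0$ for $|k|\le R_1$ and $\psi(k)=1$ for $|k|\ge 2R_1$, and set $\tau_0(k,x):=\psi(k)/\sigma(k,x)$; this is well defined since $\sigma$ does not vanish where $\psi\ne 0$. On the region $|k|\ge R_1$ Proposition \ref{3} shows that $1/\sigma$ obeys the $M_{\rho,\Lambda}^{-m}$ estimates, and since $\psi\in M_{\rho,\Lambda}^{0}$ (its higher differences are boundedly supported), the product rule of Proposition \ref{new3} gives $\tau_0\in M_{\rho,\Lambda}^{-m}(\mathbb{Z}^n\times\mathbb{T}^n)$. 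Applying the composition formula of Theorem \ref{4} to $T_{\tau_0}T_\sigma$, the $\alpha=0$ term equals $\tau_0\sigma=1-r_0$ with $r_0$ supported in $|k|\le 2R_1$ and hence in $M_{\rho,\Lambda}^{-\infty}$, while each term with $|\alpha|\ge 1$ lies in $M_{\rho,\Lambda}^{-\rho|\alpha|}$ by Proposition \ref{new3}. Thus the full symbol of $T_{\tau_0}T_\sigma$ is $1+e_1$ with $e_1\in M_{\rho,\Lambda}^{-\rho}$.

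Next I would run the induction that pushes the error to order $-\infty$. Assume that after $j$ steps $\tau^{(j)}:=\tau_0+\cdots+\tau_j$ has been produced with $\tau_i\in M_{\rho,\Lambda}^{-m-i\rho}$ and that the symbol of $T_{\tau^{(j)}}T_\sigma$ equals $1+e_{j+1}$ with $e_{j+1}\in M_{\rho,\Lambda}^{-(j+1)\rho}$. Setting $\tau_{j+1}:=-e_{j+1}\tau_0\in M_{\rho,\Lambda}^{-m-(j+1)\rho}$, the leading term of $T_{\tau_{j+1}}T_\sigma$ is $\tau_{j+1}\sigma=-e_{j+1}(1-r_0)\equiv -e_{j+1}$ modulo $M_{\rho,\Lambda}^{-\infty}$, which cancels $e_{j+1}$, whereas the $|\alpha|\ge 1$ contributions of this composition land in $M_{\rho,\Lambda}^{-(j+2)\rho}$. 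Hence the symbol of $T_{\tau^{(j+1)}}T_\sigma$ is $1+e_{j+2}$ with $e_{j+2}\in M_{\rho,\Lambda}^{-(j+2)\rho}$, closing the induction. Because the orders $-m-j\rho\to-\infty$, Theorem \ref{s5} yields a symbol $\tau\sim\sum_{j\ge 0}\tau_j\in M_{\rho,\Lambda}^{-m}$ for which the symbol of $T_\tau T_\sigma-I$ lies in $\bigcap_{N}M_{\rho,\Lambda}^{-N\rho}=M_{\rho,\Lambda}^{-\infty}$ by Lemma \ref{new2}; that is, $T_\tau T_\sigma=I+R$ with $R$ smoothing.

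Finally I would produce a right parametrix $\tau'$ by the same scheme applied to $T_\sigma T_{\tau'}$ (whose composition symbol again has leading term $\sigma\tau'$), obtaining $T_\sigma T_{\tau'}=I+S'$ with $S'$ smoothing, and then identify the two: from $T_\tau(T_\sigma T_{\tau'})=(T_\tau T_\sigma)T_{\tau'}$ one gets $T_\tau-T_{\tau'}=RT_{\tau'}-T_\tau S'$, a composition of a smoothing operator with a finite-order one, hence smoothing by Theorem \ref{4}; thus $\tau-\tau'\in M_{\rho,\Lambda}^{-\infty}$ and the single symbol $\tau$ satisfies both $T_\tau T_\sigma=I+R$ and $T_\sigma T_\tau=I+S$ with $R,S$ having symbols in $M_{\rho,\Lambda}^{-\infty}$. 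The main obstacle I expect is bookkeeping rather than conceptual: at each stage one must verify that the genuine composition symbol (which agrees with its asymptotic series only modulo lower order) differs from $1+e_{j+1}$ by precisely something of order $-(j+2)\rho$, and that the cutoff remainder $r_0$ together with all products $e_{j+1}r_0$ stay in $M_{\rho,\Lambda}^{-\infty}$. These points are controlled by the product and difference stability in Proposition \ref{new3}, the characterization in Proposition \ref{eq5}, and the observation that any symbol supported in a bounded set of $k$ automatically lies in $M_{\rho,\Lambda}^{-\infty}$.
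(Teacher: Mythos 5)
Your proof is correct and follows essentially the same route as the paper: invert the symbol using Proposition \ref{3}, use the composition formula of Theorem \ref{4} to reduce the error to negative order, remove it by an iterative Neumann-type correction summed via Theorem \ref{s5}, and identify the left and right parametrices by the standard algebraic argument. The only notable difference is that you insert a cutoff $\psi$ supported where $M$-ellipticity guarantees $\sigma\neq 0$, which is more careful than the paper's blanket assumption that $\sigma$ vanishes nowhere; otherwise your successive corrections $\tau_{j+1}=-e_{j+1}\tau_0$ are just the expanded form of the paper's formal Neumann series $S\sim\sum_{j}R^{j}$ applied to $T_{\sigma_0}$.
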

	\begin{proof}
		Let $T_\sigma$ is the pseudo-differential operator with  symbol $\sigma\in M_{\rho, \Lambda}^m(\mathbb{Z}^n\times \mathbb{T}^n).$
		Consider the symbol  $\sigma_0(k, x)=\frac{1}{\sigma(k, x)}, (k, x)\in \mathbb{Z}^n\times \mathbb{T}^n$ and let $T_{\sigma_0}$ be the corrrespoing pseudo-differential operators.	Then by the Proposition \ref{3},  the symbol $ \sigma_0(k, x)\in M_{\rho, \Lambda}^{-m}(\mathbb{Z}^n\times \mathbb{T}^n)$ and   $\sigma_0\sigma\sim 1\in M_{\rho,\Lambda}^{0}(\mathbb{Z}^n\times \mathbb{T}^n)$.    Thus,  from the composition formula given in  Theorem \ref{4}, it is true that 
		$$\lambda=\sigma_0\sigma-\sigma_R\sim 1-\sigma_R$$ 
		for some symbol $\sigma_R \in M_{\rho,  \Lambda}^{-1}(\mathbb{Z}^n\times \mathbb{T}^n),$
		equivalently $T_{\sigma_0} T_\sigma\sim I-R$. Note that $$\sum_{j=0}^{N-1} R^j(I-R)=1-R^N,$$ where $R^N\in \mathrm{OP}(M_{\rho, \Lambda}^{-N}(\mathbb{Z}^n\times \mathbb{T}^n))$, so that $S\sim \sum_{j=0}^{\infty}R^j$ is a formal Neumann series of the inverse of $I-R.$ Now 
		$$ST_{\sigma_0} T_\sigma \sim S(I-R)\sim I,$$ so that $ST_{\sigma_0} $ is the candidate for a parametrix $T_\tau.$ Indeed, this technique also produce $T_\tau' \in \mathrm{OP}(M_{\rho, \Lambda}^{-m}(\mathbb{Z}^n\times \mathbb{T}^n))$ satisfying $ T_{\sigma}T_\tau' \sim I.$ Finally, $$T_\tau=T_{\tau} (T_\sigma T_\tau') =(T_{\tau} T_\sigma) T_\tau'=T_\tau'$$  and completes the proof.
	\end{proof}
	The operator $T_\tau $ in the previous theorem is known as a  parametrix  of the $M$-elliptic pseudo-differential operator $T_\sigma.$ From the  above it can be seen immediately  that the principal symbol of a parametrix of an elliptic  pseudo-differential operator $T_{\sigma}$ is given by the symbol $\sigma_{0}=1 / \sigma$ (provided that $\sigma$ does not vanish anywhere, which we may assume).
	
	
	From Theorem 4.1 of  \cite{bot} we have the following    relation between lattice and toroidal quantizations.
	\begin{theorem}\label{l^2}
		Let $\sigma: \mathbb{Z}^{n} \times \mathbb{T}^{n} \rightarrow \mathbb{C}$ be a measurable function such that the pseudo-differential operator $T_{\sigma}: \ell^{2}\left(\mathbb{Z}^{n}\right) \rightarrow \ell^{2}\left(\mathbb{Z}^{n}\right)$ is a bounded linear operator. If we define
		$\tau: \mathbb{T}^{n} \times \mathbb{Z}^{n} \rightarrow \mathbb{C}$ by
		$$
		\tau(x, k)=\overline{\sigma(-k, x)}, \quad k \in \mathbb{Z}^{n}, x \in \mathbb{T}^{n}
		$$
		then $T_{\sigma}=\mathcal{F}_{\mathbb{Z}^{n}} T_{\tau}^{*} \mathcal{F}_{\mathbb{Z}^{n}}^{-1}$, where $T_{\tau}^{*}$ is the adjoint of $T_{\tau} .$
	\end{theorem}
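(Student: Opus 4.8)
The statement asserts that the lattice quantization $T_\sigma$ on $\ell^2(\mathbb{Z}^n)$ is intertwined, via the discrete Fourier transform, with the adjoint of the toroidal quantization $T_\tau$ on $L^2(\mathbb{T}^n)$. Since $\mathcal{F}_{\mathbb{Z}^n}$ is a unitary isomorphism between $\ell^2(\mathbb{Z}^n)$ and $L^2(\mathbb{T}^n)$, the asserted relation is equivalent to the conjugation identity
$$\mathcal{F}_{\mathbb{Z}^n}\,T_\sigma\,\mathcal{F}_{\mathbb{Z}^n}^{-1}=T_\tau^*\qquad\text{on }L^2(\mathbb{T}^n),$$
and the plan is to prove it in this form by a direct computation, first on a dense subclass and then extending by the boundedness hypothesis on $T_\sigma$.

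First I would write the toroidal operator explicitly: for $u\in C^\infty(\mathbb{T}^n)$,
$$T_\tau u(x)=\sum_{k\in\mathbb{Z}^n}e^{2\pi i k\cdot x}\tau(x,k)\,\widehat{u}(k)=\int_{\mathbb{T}^n}\sum_{k\in\mathbb{Z}^n}e^{2\pi i k\cdot(x-y)}\tau(x,k)\,u(y)\,dy,$$
and then read off its $L^2(\mathbb{T}^n)$-adjoint from the pairing $\langle T_\tau u,v\rangle=\langle u,T_\tau^* v\rangle$. Taking conjugates and interchanging the integration variables gives
$$T_\tau^* v(y)=\int_{\mathbb{T}^n}\sum_{k\in\mathbb{Z}^n}e^{2\pi i k\cdot(y-x)}\,\overline{\tau(x,k)}\,v(x)\,dx.$$

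Next I would insert the defining relation $\tau(x,k)=\overline{\sigma(-k,x)}$, so that $\overline{\tau(x,k)}=\sigma(-k,x)$, and perform the substitution $k\mapsto-k$ in the sum over $\mathbb{Z}^n$. This recasts the adjoint as
$$T_\tau^* v(y)=\int_{\mathbb{T}^n}\sum_{k\in\mathbb{Z}^n}e^{2\pi i k\cdot(x-y)}\,\sigma(k,x)\,v(x)\,dx,$$
which displays the symbol $\sigma$ in the position occurring in the lattice quantization. To close the argument I would start from $g\in C^\infty(\mathbb{T}^n)$, set $f=\mathcal{F}_{\mathbb{Z}^n}^{-1}g$ so that $\widehat{f}=g$, and compute directly from the definition of $T_\sigma$ that $T_\sigma f(k)=\int_{\mathbb{T}^n}e^{2\pi i k\cdot x}\sigma(k,x)g(x)\,dx$; applying $\mathcal{F}_{\mathbb{Z}^n}$ and interchanging the sum over $k$ with the integral over $\mathbb{T}^n$ then reproduces exactly the kernel of $T_\tau^*$ obtained above, establishing the identity on the dense class.

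The main obstacle is the rigorous justification of the interchange of the $\mathbb{Z}^n$-summation with the $\mathbb{T}^n$-integration: the kernel $\sum_k e^{2\pi i k\cdot(x-y)}\sigma(k,x)$ is in general only a distribution on $\mathbb{T}^n\times\mathbb{T}^n$, so Fubini cannot be applied naively. I would therefore carry out all the manipulations first for $f\in\mathcal{S}(\mathbb{Z}^n)$ (equivalently, for trigonometric polynomials $g$ on $\mathbb{T}^n$), where the sums are rapidly convergent and every interchange is legitimate, and only at the end invoke the hypothesis that $T_\sigma$ extends to a bounded operator on $\ell^2(\mathbb{Z}^n)$, and hence $T_\tau^*$ to a bounded operator on $L^2(\mathbb{T}^n)$, in order to pass from the dense subclass to the whole space by continuity.
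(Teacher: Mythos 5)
Your computation is correct and is essentially the standard argument; note, though, that the paper itself offers no proof of this statement --- it is quoted verbatim as Theorem 4.1 of the cited work of Botchway, Kabiti and Ruzhansky, so there is no internal proof to compare against. Checking your steps against the paper's conventions ($\widehat{f}(x)=\sum_k e^{-2\pi i k\cdot x}f(k)$, inversion $f(k)=\int_{\mathbb{T}^n}e^{2\pi i k\cdot x}\widehat{f}(x)\,dx$): the kernel of $T_\tau$ is $\sum_k e^{2\pi i k\cdot(x-y)}\tau(x,k)$, its formal adjoint has kernel $\sum_k e^{2\pi i k\cdot(y-x)}\overline{\tau(x,k)}$, and the substitutions $\overline{\tau(x,k)}=\sigma(-k,x)$ followed by $k\mapsto -k$ do produce $\int_{\mathbb{T}^n}\sum_k e^{2\pi i k\cdot(x-y)}\sigma(k,x)v(x)\,dx$, which is exactly $\mathcal{F}_{\mathbb{Z}^n}(T_\sigma \mathcal{F}_{\mathbb{Z}^n}^{-1}v)(y)$. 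Your decision to work first with finitely supported $f$ (trigonometric polynomials $g$), where the $k$-sum is finite and every interchange is trivially justified, and only then extend by density using the assumed $\ell^2$-boundedness, is the right way to handle the fact that $\sigma$ is merely measurable. Two small remarks: (i) the intertwining relation as displayed in the paper, $T_\sigma=\mathcal{F}_{\mathbb{Z}^n}T_\tau^*\mathcal{F}_{\mathbb{Z}^n}^{-1}$, does not type-check as written (since $\mathcal{F}_{\mathbb{Z}^n}$ maps $\ell^2(\mathbb{Z}^n)$ to $L^2(\mathbb{T}^n)$); your reformulation $\mathcal{F}_{\mathbb{Z}^n}T_\sigma\mathcal{F}_{\mathbb{Z}^n}^{-1}=T_\tau^*$ is the reading that makes the domains match and is the intended statement, but you should say explicitly that you are correcting the orientation rather than calling the two literally equivalent; (ii) when you "invoke boundedness of $T_\tau^*$", be careful to phrase this so that the boundedness of $T_\tau$ (hence of $T_\tau^*$) is \emph{deduced} from the identity already established on the dense class together with the boundedness of $T_\sigma$, rather than assumed --- as it stands your last sentence could be read as presupposing what the identity is meant to deliver.
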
 
	In the following, we prove  $\ell^2$-boundedness    of the pseudo-differential operator $T_\sigma$ when  the symbol $\sigma\in M_{\rho, \Lambda }^{0}\left(\mathbb{Z}^{n} \times \mathbb{T}^{n}\right) $.
	\begin{theorem}	\label{eq20}
		Let \(\sigma \in M_{\rho, \Lambda }^{0}\left(\mathbb{Z}^{n} \times \mathbb{T}^{n}\right) .\) Then \(T_{\sigma}: \ell^{2}\left(\mathbb{Z}^{n}\right) \rightarrow \ell^{2}\left(\mathbb{Z}^{n}\right)\) is a bounded linear operator.	\end{theorem}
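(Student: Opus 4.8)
The plan is to reduce the $\ell^2(\mathbb{Z}^n)$-boundedness of $T_\sigma$ to the $L^2(\mathbb{T}^n)$-boundedness of an associated toroidal operator, and then to transport it back through the lattice--toroidal correspondence of Theorem \ref{l^2}. First, since $M_{\rho,\Lambda}^{0}(\mathbb{Z}^n\times\mathbb{T}^n)\subset S_{\rho,\Lambda}^{0}(\mathbb{Z}^n\times\mathbb{T}^n)$ by Remark \ref{eq6}, the symbol $\sigma$ satisfies $|D_x^{(\beta)}\Delta_k^{\alpha}\sigma(k,x)|\le C_{\alpha,\beta}\,\Lambda(k)^{-\rho|\alpha|}$ for all multi-indices $\alpha,\beta$; combining this with the lower bound $\Lambda(k)\ge C_0(1+|k|)^{\mu_0}$ from \eqref{growth} yields the genuinely polynomial estimates $|D_x^{(\beta)}\Delta_k^{\alpha}\sigma(k,x)|\le C_{\alpha,\beta}'(1+|k|)^{-\rho\mu_0|\alpha|}$ with $\rho\mu_0>0$. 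I would then set $\tau(x,k):=\overline{\sigma(-k,x)}$ as in Theorem \ref{l^2}.

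Next I would verify that $\tau$ is a toroidal symbol of order zero. Applying $\partial_x^{\beta}$ and the forward differences $\Delta_k^{\alpha}$ to $\tau(x,k)=\overline{\sigma(-k,x)}$ only reflects the lattice variable and conjugates (a forward difference in $k$ becomes, up to sign, a backward difference of $\sigma$ at $-k$, which obeys the same estimates), so the bounds above give $|\partial_x^{\beta}\Delta_k^{\alpha}\tau(x,k)|\le C_{\alpha,\beta}'(1+|k|)^{-\rho\mu_0|\alpha|}$. In particular $\tau$ together with finitely many of its $x$-derivatives and $k$-differences is uniformly bounded, so $\tau$ lies in the toroidal H\"ormander class $S^{0}_{\rho\mu_0,0}(\mathbb{T}^n\times\mathbb{Z}^n)$ (note $\rho\mu_0\le\rho\mu\le1$).

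With this in hand I would invoke the $L^2$-boundedness of toroidal pseudo-differential operators of order zero (see \cite{RuzT,ruz1}) to conclude that $T_\tau\colon L^2(\mathbb{T}^n)\to L^2(\mathbb{T}^n)$ is bounded, and hence so is its adjoint $T_\tau^{*}$. Since the discrete Fourier transform $\mathcal{F}_{\mathbb{Z}^n}\colon \ell^2(\mathbb{Z}^n)\to L^2(\mathbb{T}^n)$ is unitary by the Plancherel formula, the operator $\mathcal{F}_{\mathbb{Z}^n}T_\tau^{*}\mathcal{F}_{\mathbb{Z}^n}^{-1}$ is bounded on $\ell^2(\mathbb{Z}^n)$; by the intertwining identity $T_\sigma=\mathcal{F}_{\mathbb{Z}^n}T_\tau^{*}\mathcal{F}_{\mathbb{Z}^n}^{-1}$ of Theorem \ref{l^2} this operator is precisely $T_\sigma$, which is therefore bounded, with $\|T_\sigma\|_{\ell^2\to\ell^2}=\|T_\tau\|_{L^2\to L^2}$.

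The delicate point, and the one I expect to require the most care, is the logical order: Theorem \ref{l^2} is stated under the a priori assumption that $T_\sigma$ is already bounded, so to avoid circularity I would first establish the intertwining identity $T_\sigma=\mathcal{F}_{\mathbb{Z}^n}T_\tau^{*}\mathcal{F}_{\mathbb{Z}^n}^{-1}$ on the dense subspace of finitely supported sequences (where all the sums involved converge absolutely), and only then use the already-proven boundedness of $T_\tau^{*}$ to extend $T_\sigma$ to a bounded operator on all of $\ell^2(\mathbb{Z}^n)$. The remaining care is routine: confirming that the cited toroidal boundedness theorem demands no more than the finitely many bounded $x$-derivatives and $k$-differences of $\tau$ that the $S^{0}_{\rho\mu_0,0}$ estimates manifestly supply.
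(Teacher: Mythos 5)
Your proposal is correct and follows the same overall skeleton as the paper's proof: both reduce $\ell^2(\mathbb{Z}^n)$-boundedness to $L^2(\mathbb{T}^n)$-boundedness of the toroidal operator with symbol $\tau(x,k)=\overline{\sigma(-k,x)}$ via the intertwining identity of Theorem \ref{l^2}. Where you diverge is in the toroidal step: the paper observes that $\sigma\in M^{0}_{\rho,\Lambda}(\mathbb{Z}^n\times\mathbb{T}^n)$ if and only if $\tau\in M^{0}_{\rho,\Lambda}(\mathbb{T}^n\times\mathbb{Z}^n)$ and then cites Kalleji's $L^2$-boundedness theorem for that weighted toroidal $M$-class, whereas you embed $\tau$ into the ordinary toroidal H\"ormander class $S^{0}_{\rho\mu_0,0}(\mathbb{T}^n\times\mathbb{Z}^n)$ (using $\Lambda(k)\ge C_0(1+|k|)^{\mu_0}$, so that the weighted decay becomes genuine polynomial decay) and invoke the classical Ruzhansky--Turunen $L^2$-boundedness result, which in fact only needs uniform bounds on finitely many $x$-derivatives. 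Your route is more self-contained, since it does not lean on the weighted $M$-class machinery on the torus, at the cost of the small extra verification that forward differences of $\tau$ correspond to (reflected, conjugated) backward differences of $\sigma$ obeying the same estimates, and that bounds on $D_x^{(\beta)}$ for all $\beta$ control ordinary derivatives $\partial_x^\beta$. You also flag, correctly, a point the paper glosses over: Theorem \ref{l^2} is stated under the a priori hypothesis that $T_\sigma$ is bounded, so the identity $T_\sigma=\mathcal{F}_{\mathbb{Z}^n}T_\tau^{*}\mathcal{F}_{\mathbb{Z}^n}^{-1}$ should first be checked on finitely supported sequences and the boundedness then transported back; your handling of this is cleaner than the paper's.
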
 
	\begin{proof}
		Since  the Fourier transform $\mathcal{F}_{\mathbb{Z}^{n}}: \ell^{2}\left(\mathbb{Z}^{n}\right) \rightarrow L^{2}\left(\mathbb{T}^{n}\right)$
		is an isometry, using  Theorem \ref{l^2}, it follows that $T_{\sigma}: \ell^{2}\left(\mathbb{Z}^{n}\right) \rightarrow \ell^{2}\left(\mathbb{Z}^{n}\right)$ is a bounded linear operator if and
		only if $T_{\tau}: L^{2}\left(\mathbb{T}^{n}\right) \rightarrow L^{2}\left(\mathbb{T}^{n}\right)$ is a bounded linear operator.  Also we note that $\sigma \in M_{\rho, \Lambda }^{0}\left(\mathbb{Z}^{n} \times \mathbb{T}^{n}\right) $
		if and only if $\tau \in M^{0}_{\rho, \Lambda }\left(\mathbb{T}^{n} \times \mathbb{Z}^{n}\right) .$  But in Theorem 3.3 of \cite{kal}, Kalleji proved that   if $\tau \in M^{0}_{\rho, \Lambda }\left(\mathbb{T}^{n} \times \mathbb{Z}^{n}\right) $  then $ T_{\tau}: L^{2}\left(\mathbb{T}^{n}\right) \rightarrow L^{2}\left(\mathbb{T}^{n}\right)$ is a
		bounded linear operator. Thus if \(\sigma \in M_{\rho, \Lambda }^{0}\left(\mathbb{Z}^{n} \times \mathbb{T}^{n}\right) \),  then \(T_{\sigma}: \ell^{2}\left(\mathbb{Z}^{n}\right) \rightarrow \ell^{2}\left(\mathbb{Z}^{n}\right)\) is a bounded linear operator. This completes the proof.
	\end{proof}

	\section{Minimal maximal operators}	\label{sec4}

	In this  section,  we  define weighted Sobolev space and  investigate  the minimal and maximal pseudo-differential operators on $\ell^2(\mathbb{Z}^n)$.    We show that show that they coincide on $\ell^2(\mathbb{Z}^n)$ subject to the  $M$-ellipticity of symbols. We also determine the domains of the minimal and maximal operators.

	For $s\in \mathbb{R}$, the weighted Sobolev space $\mathcal{H}_{ \Lambda}^{s, 2}(\mathbb{Z}^n)$ is defined by	
	$$	\mathcal{H}_{ \Lambda}^{s, 2}(\mathbb{Z}^n)=\left\{w \in \mathcal{S}'(\mathbb{Z}^n): 	\|w\|_{s, 2,\Lambda}<\infty \right\},	$$
	where the norm $\|.\|_{ s, 2,\Lambda}$ is given by	$$	\|w\|_{s, 2,\Lambda}=\left\|\Lambda(D)^{s} w\right\|_{\ell^{2}\left(\mathbb{Z}^n\right)}:=\left(\sum_{k \in \mathbb{Z}^{n}}\Lambda(k)^{2s }|w(k)|^{2}\right)^{\frac{1}{2}}, \quad w \in \mathcal{H}_{\Lambda}^{s, 2}.	$$
	Moreover, $\left( 	\mathcal{H}_{ \Lambda}^{s, 2}(\mathbb{Z}^n), \|.\|_{ s, 2,\Lambda} \right)$  is a Banach space. Recall that the symbol $\sigma_{s}(k)=\Lambda(k)^{s } \in M_{\rho, \Lambda}^{s} \left(\mathbb{Z}^{n}\times\mathbb{T}^n \right)$ and denote by $\Lambda(D)^s$   the corresponding operator instead of $T_ {\sigma_{s}}$ or $\operatorname{OP}(\sigma_s).$   Further,  we have $f \in 	\mathcal{H}_{ \Lambda}^{s, 2}(\mathbb{Z}^n)$ if and only if $\Lambda(D)^sf \in \ell^{2}\left(\mathbb{Z}^{n}\right) .$ Consequently, 	\begin{align}\label{eq30}			\mathcal{H}_{ \Lambda}^{s, 2}(\mathbb{Z}^n)=\Lambda(D)^{-s}\left(\ell^{2}\left(\mathbb{Z}^{n}\right)\right).	\end{align}	
	Using the above notations, we have the following result. 
	

	

	\begin{proposition}
		For  $s\in \mathbb{R}, \Lambda(D)^s: 	\mathcal{H}_{ \Lambda}^{s, 2}(\mathbb{Z}^n) \rightarrow \ell^{2}\left(\mathbb{Z}^{n}\right)$ is a surjective isometry.
	\end{proposition}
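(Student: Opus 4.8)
The plan is to separate the statement into its two halves --- that $\Lambda(D)^{s}$ preserves the norm, and that it is onto --- and to observe that the isometry is essentially built into the definition of $\mathcal{H}_{\Lambda}^{s,2}$, while surjectivity follows from the invertibility of the Fourier multiplier $\Lambda^{s}$. First I would record that $\Lambda(D)^{s}$ indeed maps $\mathcal{H}_{\Lambda}^{s,2}(\mathbb{Z}^n)$ into $\ell^{2}(\mathbb{Z}^n)$: this is immediate, since by the very definition of the space one has $\Lambda(D)^{s}w\in\ell^{2}(\mathbb{Z}^n)$ for every $w\in\mathcal{H}_{\Lambda}^{s,2}$. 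The isometry assertion then reads off directly from the definition of the norm, because $\|\Lambda(D)^{s}w\|_{\ell^{2}(\mathbb{Z}^n)}=\|w\|_{s,2,\Lambda}$ for all $w\in\mathcal{H}_{\Lambda}^{s,2}$. In particular $\Lambda(D)^{s}$ is injective, for if $\Lambda(D)^{s}w=0$ then $\|w\|_{s,2,\Lambda}=0$ and hence $w=0$.

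For surjectivity, the key observation is that, since $\Lambda$ is strictly positive on $\mathbb{Z}^n$ by the lower bound in \eqref{growth}, the pointwise identity $\Lambda^{s}\Lambda^{-s}=1$ holds, so that on the Fourier side multiplication by $\Lambda^{s}$ is inverted by multiplication by $\Lambda^{-s}$. Transporting this through the Fourier transform yields the operator identity $\Lambda(D)^{s}\Lambda(D)^{-s}=\Lambda(D)^{-s}\Lambda(D)^{s}=I$ on $\mathcal{S}'(\mathbb{Z}^n)$, where $\Lambda(D)^{-s}$ is defined by the same formula with $-s$ in place of $s$. Given $f\in\ell^{2}(\mathbb{Z}^n)\subset\mathcal{S}'(\mathbb{Z}^n)$, I would then set $w:=\Lambda(D)^{-s}f\in\mathcal{S}'(\mathbb{Z}^n)$ and verify $\Lambda(D)^{s}w=f\in\ell^{2}(\mathbb{Z}^n)$; this simultaneously shows that $w\in\mathcal{H}_{\Lambda}^{s,2}$ and that $f$ lies in the range of $\Lambda(D)^{s}$, which establishes that the map is onto and completes the argument.

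The only genuinely delicate point I anticipate is making the composition law $\Lambda(D)^{s}\Lambda(D)^{-s}=I$ rigorous at the level of tempered distributions rather than merely formally. This requires checking that $\mathcal{F}_{\mathbb{Z}^n}$ is a bijection between $\mathcal{S}'(\mathbb{Z}^n)$ and $D'(\mathbb{T}^n)$ intertwining the two multipliers, and that multiplication by the smooth, strictly positive, polynomially bounded weights $\Lambda^{\pm s}$ is a well-defined continuous operation on the relevant space, so that composing the two multipliers reduces to the scalar relation $\Lambda^{s}\Lambda^{-s}=1$. Once this bookkeeping is in place the proof needs no further estimates, since both the norm equality and the inversion identity are then exact rather than approximate.
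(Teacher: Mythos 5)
Your argument is correct and follows essentially the same route as the paper: the isometry is read off from the definition of the norm on $\mathcal{H}_{\Lambda}^{s,2}(\mathbb{Z}^n)$, and surjectivity is obtained by sending $f\in\ell^{2}(\mathbb{Z}^n)$ to $w=\Lambda(D)^{-s}f$ and checking $\Lambda(D)^{s}w=f$. Your additional remarks justifying the inversion $\Lambda(D)^{s}\Lambda(D)^{-s}=I$ via the strict positivity of $\Lambda$ make explicit a step the paper leaves implicit, but the substance of the proof is the same.
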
 
	\begin{proof}
		Since 	$\|w\|_{s, 2,\Lambda}=\left\|\Lambda(D)^{s} w\right\|_{\ell^{2}\left(\mathbb{Z}^n\right)}$ for any $w \in \mathcal{H}_{\Lambda}^{s, 2},$ it follows that
		$	\Lambda(D)^s: 	\mathcal{H}_{ \Lambda}^{s, 2}(\mathbb{Z}^n) \rightarrow \ell^{2}\left(\mathbb{Z}^{n}\right)$ is a   isometry. Now for every $v \in\ell^{2}\left(\mathbb{Z}^{n}\right)$, we set $w:=\left(	\Lambda(D)^s\right)^{-1} v .$ Then $	\Lambda(D)^s w=v \in \ell^{2}\left(\mathbb{Z}^{n}\right)$.  Hence, $ \Lambda(D)^s: 	\mathcal{H}_{ \Lambda}^{s, 2}(\mathbb{Z}^n) \rightarrow \ell^{2}\left(\mathbb{Z}^{n}\right)$ is surjection.
	\end{proof}
	The following result know as the Sobolev embedding theorem. 
	\begin{proposition}\label{1}		For   $m_1 \leq m_2, \mathcal{H}_{\Lambda}^{m_2, 2} \subseteq  \mathcal{H}_{ \Lambda}^{m_1, 2}$ and		$$		\|w\|_{ m_1, 2,\Lambda} \leq \|w\|_{m_2, 2,\Lambda}, ~~ w\in \mathcal{H}_{\Lambda}^{m_1, 2}		.$$
	\end{proposition}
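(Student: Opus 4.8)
The plan is to reduce everything to the elementary observation that $\Lambda(D)^s$ acts as pointwise multiplication by $\Lambda(k)^s$ on sequences. The operator $\Lambda(D)^s$ is the Fourier multiplier with symbol $\Lambda(k)^s$, and since this symbol does not depend on the frequency variable $x$, the associated pseudo-differential operator reduces to pointwise multiplication, $(\Lambda(D)^s w)(k)=\Lambda(k)^s w(k)$. First I would record the resulting spectral expression for the norm: by the definition of $\|\cdot\|_{s,2,\Lambda}$,
$$\|w\|_{s,2,\Lambda}^2 = \left\|\Lambda(D)^s w\right\|_{\ell^2(\mathbb{Z}^n)}^2 = \sum_{k\in\mathbb{Z}^n}\Lambda(k)^{2s}\,|w(k)|^2,$$
valid for every $w$ for which the right-hand side is finite.

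Next I would establish the pointwise comparison of the weights. From the lower growth estimate in \eqref{growth} one has $\Lambda(k)\geq C_0(1+|k|)^{\mu_0}\geq 1$ for all $k\in\mathbb{Z}^n$ (this normalization holds, for instance, for the weights $\Lambda_m$ of the preceding example). Hence, for $m_1\leq m_2$ and each fixed $k$, the inequality $\Lambda(k)^{2(m_2-m_1)}\geq 1$ gives $\Lambda(k)^{2m_1}\leq\Lambda(k)^{2m_2}$. Multiplying by the nonnegative term $|w(k)|^2$ and summing over $k$ yields, via the formula above,
$$\|w\|_{m_1,2,\Lambda}^2 = \sum_{k\in\mathbb{Z}^n}\Lambda(k)^{2m_1}|w(k)|^2 \leq \sum_{k\in\mathbb{Z}^n}\Lambda(k)^{2m_2}|w(k)|^2 = \|w\|_{m_2,2,\Lambda}^2,$$
which is exactly the claimed norm inequality.

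The inclusion then follows formally. Given $w\in\mathcal{H}_{\Lambda}^{m_2,2}$, we have $\Lambda(D)^{m_2}w\in\ell^2(\mathbb{Z}^n)$, so in particular $w$ is a genuine sequence and $\|w\|_{m_2,2,\Lambda}<\infty$; the inequality just proved forces $\|w\|_{m_1,2,\Lambda}\leq\|w\|_{m_2,2,\Lambda}<\infty$, that is, $\Lambda(D)^{m_1}w\in\ell^2(\mathbb{Z}^n)$, whence $w\in\mathcal{H}_{\Lambda}^{m_1,2}$. This gives $\mathcal{H}_{\Lambda}^{m_2,2}\subseteq\mathcal{H}_{\Lambda}^{m_1,2}$.

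There is no genuine analytic difficulty here: once the norm is written spectrally, the statement is a one-line monotonicity argument. The only points requiring a little care are the passage from the multiplier definition of $\Lambda(D)^s$ to the pointwise description $(\Lambda(D)^s w)(k)=\Lambda(k)^s w(k)$, and the use of $\Lambda(k)\geq 1$, which is precisely what pins the embedding constant to exactly $1$; with only $\Lambda(k)\geq C_0>0$ available one would instead obtain the weaker continuous inclusion $\|w\|_{m_1,2,\Lambda}\leq C_0^{\,m_1-m_2}\|w\|_{m_2,2,\Lambda}$.
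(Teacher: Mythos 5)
Your proof is correct and follows essentially the same route as the paper: the paper writes $\Lambda(D)^{m_1}=\Lambda(D)^{m_1-m_2}\Lambda(D)^{m_2}$ and invokes $\|\Lambda(D)^{m_1-m_2}v\|_{\ell^2}\leq\|v\|_{\ell^2}$, which is precisely the pointwise bound $\Lambda(k)^{m_1-m_2}\leq 1$ that you verify after writing the norm as a weighted sum. You are in fact slightly more careful than the paper, since you flag explicitly that the constant-$1$ embedding rests on the normalization $\Lambda(k)\geq 1$, which the paper's inequality step uses silently.
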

	\begin{proof}	Let $w\in \mathcal{H}_{\Lambda}^{m_2, 2}.$  Since $m_1\leq m_2$,  we have 		\begin{align*} 		\|w\|_{m_1, 2,\Lambda} &=\left\|\Lambda(D)^{m_1}   w\right\|_{\ell^{2}\left(\mathbb{Z}^n\right)}\\&=\left\|  \Lambda(D)^{m_1-m_2}\Lambda(D)^{m_2}w\right\|_{\ell^{2}\left(\mathbb{Z}^n\right)}\\& 	\leq  \left\|  \Lambda(D)^{m_2}w\right\|_{\ell^{2}\left(\mathbb{Z}^n\right)}	=\|w\|_{m_2, 2,\Lambda} 	\end{align*}   and consequently   $\mathcal{H}_{\Lambda}^{m_2, 2} \subseteq  \mathcal{H}_{ \Lambda}^{m_1, 2}$.  \end{proof}
	\begin{corollary}\label{2}	
		Let  $\sigma \in M_{\rho, \Lambda}^{m}\left(\mathbb{Z}^{n} \times \mathbb{T}^{n}\right) $. Then the operator $T_\sigma $   extends to a bounded operator from $ \mathcal{H}_{ \Lambda}^{s+m, 2} $  to $ \mathcal{H}_{ \Lambda}^{s, 2}$ for any $s \in \mathbb{R}.$
	\end{corollary}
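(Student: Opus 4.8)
The plan is to reduce the statement to the order-zero $\ell^2$-boundedness already proved in Theorem \ref{eq20}, by conjugating $T_\sigma$ with the order-reducing operators $\Lambda(D)^t$. First I would record that, since $\Lambda(D)^t:\mathcal H_\Lambda^{t,2}\to\ell^2(\mathbb Z^n)$ is a surjective isometry for every $t\in\mathbb R$ (the proposition immediately preceding this corollary), the asserted boundedness of $T_\sigma:\mathcal H_\Lambda^{s+m,2}\to\mathcal H_\Lambda^{s,2}$ is, by the very definition of the norms $\|\cdot\|_{t,2,\Lambda}$, equivalent to the $\ell^2$-boundedness of the conjugated operator
\[
A:=\Lambda(D)^{s}\,T_\sigma\,\Lambda(D)^{-(s+m)}:\ell^2(\mathbb Z^n)\to\ell^2(\mathbb Z^n).
\]
Indeed, setting $v=\Lambda(D)^{s+m}w$ transforms the desired inequality $\|T_\sigma w\|_{s,2,\Lambda}\le C\|w\|_{s+m,2,\Lambda}$ into $\|Av\|_{\ell^2}\le C\|v\|_{\ell^2}$, so the two formulations coincide on a dense subspace.

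Second, I would identify each $\Lambda(D)^{t}$ with the pseudo-differential operator $T_{\Lambda^{t}}$ attached to the $x$-independent symbol $(k,x)\mapsto\Lambda(k)^{t}$: a direct computation with \eqref{s12} and Fourier inversion gives $T_{\Lambda^{t}}f(k)=\Lambda(k)^{t}f(k)$, i.e.\ multiplication by $\Lambda^{t}$. The structural input I then need is that $\Lambda^{t}\in M_{\rho,\Lambda}^{t}(\mathbb Z^n\times\mathbb T^n)$ for every real $t$. For a non-negative integer $t$ this is immediate from the product rule of Proposition \ref{new3}(2) applied to $\Lambda\in M_{\rho,\Lambda}^{1}$, the latter being a consequence of the weight inequality in Definition \ref{definition} together with $\rho\le 1/\mu$ and $\Lambda\gtrsim 1$; for $t<0$ it follows from Proposition \ref{3}, since $\Lambda^{|t|}$ is $M$-elliptic and hence $\Lambda^{t}=1/\Lambda^{|t|}\in M_{\rho,\Lambda}^{t}$; the general real exponent is handled through the characterization of Proposition \ref{eq5} by estimating the iterated differences $k^\gamma\Delta_k^{\alpha+\gamma}\Lambda^{t}$ via a discrete chain-rule expansion and the defining bound $|k^\gamma\Delta_k^{\alpha+\gamma}\Lambda|\le C\,\Lambda^{1-(1/\mu)|\alpha|}$.

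With this in hand I would apply the composition theorem (Theorem \ref{4}) twice: first $T_\sigma\,\Lambda(D)^{-(s+m)}=T_\sigma T_{\Lambda^{-(s+m)}}\in\mathrm{OP}\,M_{\rho,\Lambda}^{\,m-(s+m)}=\mathrm{OP}\,M_{\rho,\Lambda}^{-s}$, and then $A=T_{\Lambda^{s}}\bigl(T_\sigma T_{\Lambda^{-(s+m)}}\bigr)\in\mathrm{OP}\,M_{\rho,\Lambda}^{\,s+(-s)}=\mathrm{OP}\,M_{\rho,\Lambda}^{0}$. Thus $A$ is a pseudo-differential operator of order zero, and Theorem \ref{eq20} gives its $\ell^2$-boundedness. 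Undoing the conjugation yields the estimate $\|T_\sigma w\|_{s,2,\Lambda}\le C\|w\|_{s+m,2,\Lambda}$ on $\mathcal S(\mathbb Z^n)$, and since $\mathcal S(\mathbb Z^n)$ is dense in $\mathcal H_\Lambda^{s+m,2}$, $T_\sigma$ extends uniquely to the claimed bounded operator.

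The step I expect to be the main obstacle is the second one: verifying that \emph{all} real powers $\Lambda^{t}$ of the weight are genuine $M_{\rho,\Lambda}^{t}$-symbols. The integer and negative cases are cheap consequences of the product rule and Proposition \ref{3}, but an arbitrary real exponent forces control of iterated differences of $\Lambda^{t}$ to every order, and this is the one point where a nontrivial computation (a discrete Faà di Bruno/Leibniz argument combined with the defining weight estimate) cannot be avoided. Once that lemma is secured, the remainder is a purely formal consequence of the surjective isometry, the composition calculus, and the order-zero $\ell^2$-boundedness of Theorem \ref{eq20}.
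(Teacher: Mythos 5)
Your proposal follows essentially the same route as the paper: the paper's proof also forms $A=\Lambda(D)^{s}T_\sigma\Lambda(D)^{-s-m}$, asserts its symbol lies in $M_{\rho,\Lambda}^{0}(\mathbb{Z}^n\times\mathbb{T}^n)$, invokes Theorem \ref{eq20} for $\ell^2$-boundedness, and undoes the conjugation exactly as you do. The only difference is that you correctly flag and sketch the justification of the step the paper leaves unproved, namely that $\Lambda^{t}\in M_{\rho,\Lambda}^{t}(\mathbb{Z}^n\times\mathbb{T}^n)$ for every real $t$ so that the composition calculus applies.
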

	\begin{proof}
		Let us consider the   operator $A=\Lambda(D)^{s} T_\sigma \Lambda(D)^{-s-m}$. Then the    symbol of $A$ is in $M_{\rho, \Lambda}^{0}\left(\mathbb{Z}^{n} \times \mathbb{T}^{n}\right) .$   Thus from Theorem \ref{eq20}, the operator $A: \ell^{2}\left(\mathbb{Z}^{n}\right) \to  \ell^{2}\left(\mathbb{Z}^{n}\right)$ is  bounded. Now, for every $w\in \mathcal{H}_{ \Lambda}^{s+m, 2} $, we have 
		\begin{align*}
			\|T_\sigma w\|_{s, 2,\Lambda} &=\left\|\Lambda(D)^{s} T_\sigma w\right\|_{\ell^{2}\left(\mathbb{Z}^n\right)}\\
			&=\left\|A \; \Lambda(D)^{s+m}w\right\|_{\ell^{2}\left(\mathbb{Z}^n\right)}\\
			&\leq C \left\| \Lambda(D)^{s+m}w\right\|_{\ell^{2}\left(\mathbb{Z}^n\right)}	=\|w\|_{ s+m, 2,\Lambda}.
		\end{align*}
		This shows that $T_\sigma $   is a   bounded operator from $ \mathcal{H}_{ \Lambda}^{s+m, 2} $  to $ \mathcal{H}_{ \Lambda}^{s, 2}$ for any $s \in \mathbb{R}.$
	\end{proof}
	
	For $-\infty<m<\infty$, we   define $M_{\rho, \Lambda, 0}^{m}\left(\mathbb{Z}^{n} \times \mathbb{T}^{n}\right) $ to be the set of symbols $\sigma\in M_{\rho, \Lambda}^{0}\left(\mathbb{Z}^{n} \times \mathbb{T}^{n}\right) $ such that for all multi-indices $\alpha, \beta$  and $ \gamma \in\{0,1\}^n$, there exists a bounded real-valued function $C_{\alpha, \beta}$ on $\mathbb{Z}^{n}$ for which 
	$$	\left| k^\gamma D_{x}^{(\beta)} \Delta_{k}^{\alpha+\gamma} \sigma(k, x)\right| \leq C_{ \alpha, \beta,\gamma}\Lambda(k)^{m-\rho|\alpha|}$$
	and
	$$
	\lim _{|k| \rightarrow \infty} C_{\alpha, \beta,\gamma}(k)=\lim _{|k| \rightarrow \infty} C_{\alpha, \beta, \gamma}\Lambda(k)^{m-|\alpha|}=0.
	$$
	In particular, we have  $M_{\rho, \Lambda}^{-m}\left(\mathbb{Z}^{n} \times \mathbb{T}^{n}\right) \subseteq M_{\rho, \Lambda, 0}^{m}\left(\mathbb{Z}^{n} \times \mathbb{T}^{n}\right) $. Then the following discrete weighted $\ell^{2}$-version of for discrete pseudo-differential operators can be proved using suitable 
	modifications given in Theorem 3.2 in  \cite{wongggg}.
	\begin{theorem}\label{eq12}
		Let $\sigma \in M_{\rho, \Lambda, 0}^{m}\left(\mathbb{Z}^{n} \times \mathbb{T}^{n}\right)$  and $s\in \mathbb{R}$. Then for every positive number $\varepsilon, T_{\sigma}: \mathcal{H}_{ \Lambda}^{s+m, 2} \left(\mathbb{Z}^{n}\right) \rightarrow \mathcal{H}_{ \Lambda}^{s-\varepsilon, 2} \left(\mathbb{Z}^{n}\right)$ is a compact operator.
	\end{theorem}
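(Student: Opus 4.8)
The plan is to transfer the problem from the weighted Sobolev scale to $\ell^2(\mathbb{Z}^n)$ and then to exhibit the resulting operator as a norm limit of finite rank operators. Recall that $\Lambda(D)^t = T_{\Lambda^t}$ is the pseudo-differential operator with the $x$-independent symbol $\Lambda(k)^t \in M^t_{\rho,\Lambda}(\mathbb{Z}^n\times\mathbb{T}^n)$, and that $\Lambda(D)^t\colon \mathcal{H}^{t,2}_\Lambda \to \ell^2(\mathbb{Z}^n)$ is a surjective isometry. Consequently, $T_\sigma\colon \mathcal{H}^{s+m,2}_\Lambda \to \mathcal{H}^{s-\varepsilon,2}_\Lambda$ is compact if and only if the conjugated operator $A := \Lambda(D)^{s-\varepsilon}\, T_\sigma\, \Lambda(D)^{-(s+m)}\colon \ell^2(\mathbb{Z}^n)\to\ell^2(\mathbb{Z}^n)$ is compact, since compactness is preserved under pre- and post-composition with the isometric isomorphisms $\Lambda(D)^{-(s+m)}$ and $\Lambda(D)^{s-\varepsilon}$.

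Next I would identify the symbol of $A$. Since $\Lambda^{s-\varepsilon}\in M^{s-\varepsilon}_{\rho,\Lambda}$ and $\Lambda^{-(s+m)}\in M^{-(s+m)}_{\rho,\Lambda}$, the composition formula of Theorem \ref{4} shows that $A = T_a$ for some $a$ of order $(s-\varepsilon)+m-(s+m) = -\varepsilon$, i.e. $a\in M^{-\varepsilon}_{\rho,\Lambda}(\mathbb{Z}^n\times\mathbb{T}^n)$. Moreover, because $\sigma$ lies in the smaller class $M^m_{\rho,\Lambda,0}$, whose defining constants decay as $|k|\to\infty$, and because this decay is preserved under multiplication by the ordinary symbols $\Lambda^{\pm}$ and under the operations in Proposition \ref{new3}, one checks that in fact $a\in M^{-\varepsilon}_{\rho,\Lambda,0}$. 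In particular, each of the $M^0_{\rho,\Lambda}$-seminorms of $a$, namely $\sup_{x}\Lambda(k)^{\rho|\alpha|}\bigl|k^\gamma D^{(\beta)}_x\Delta^{\alpha+\gamma}_k a(k,x)\bigr|$, is bounded by a function of $k$ that tends to $0$ as $|k|\to\infty$.

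To prove that $A$ is compact I would approximate it by truncation in the lattice variable. For $R>0$ set $a_R(k,x):=a(k,x)$ if $|k|\le R$ and $a_R(k,x):=0$ otherwise. The operator $T_{a_R}$ maps $\ell^2(\mathbb{Z}^n)$ into the finite dimensional space $\ell^2(\{k:|k|\le R\})$, so it has finite rank and is compact. For the remainder $a-a_R=\mathbf{1}_{|k|>R}\,a$, all the $M^0_{\rho,\Lambda}$-seminorms are dominated by quantities of the form $\sup_{|k|>R}(\cdots)$, which tend to $0$ as $R\to\infty$ by the previous paragraph. Feeding this into the quantitative form of the $\ell^2$-boundedness estimate of Theorem \ref{eq20}, whose proof controls $\|T_b\|_{\mathcal{B}(\ell^2)}$ by finitely many such seminorms of $b$, I obtain $\|A-T_{a_R}\|_{\mathcal{B}(\ell^2)}=\|T_{a-a_R}\|\to 0$. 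Thus $A$ is a norm limit of finite rank operators, hence compact, and by the first paragraph so is $T_\sigma$.

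The main obstacle is precisely the tail estimate in the last step: one needs to know that the $\ell^2$-operator norm is \emph{controlled} by finitely many symbol seminorms, rather than merely that boundedness holds. This quantitative dependence must be read off from the transference identity of Theorem \ref{l^2} combined with Kalleji's boundedness theorem on $\mathbb{T}^n$; isolating it is the only delicate point. I note that one can sidestep the truncation argument entirely: by Corollary \ref{2}, $T_\sigma\colon \mathcal{H}^{s+m,2}_\Lambda \to \mathcal{H}^{s,2}_\Lambda$ is bounded, while the embedding $\mathcal{H}^{s,2}_\Lambda \hookrightarrow \mathcal{H}^{s-\varepsilon,2}_\Lambda$ of Proposition \ref{1} is compact, because after conjugation by the isometries $\Lambda(D)^{s}$ and $\Lambda(D)^{s-\varepsilon}$ it becomes multiplication by $\Lambda(k)^{-\varepsilon}$ on $\ell^2(\mathbb{Z}^n)$, a diagonal operator whose entries tend to $0$ in view of the lower bound $\Lambda(k)\ge C_0(1+|k|)^{\mu_0}$ in \eqref{growth}, and hence a norm limit of finite rank diagonal operators. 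Composing the bounded map with this compact embedding yields the compactness of $T_\sigma$ directly.
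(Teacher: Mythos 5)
Your closing ``sidestep'' is the right proof, and it is in substance what the paper's own argument reduces to: the paper writes $T_{\sigma_{k_1}}=\phi_{k_1}T_{\sigma}$ with a \emph{smooth} cutoff $\phi_{k_1}(k)=\phi(k/k_1)$, invokes Corollary \ref{2} for boundedness into $\mathcal{H}_{\Lambda}^{s,2}$ together with the compactness of the passage from $\mathcal{H}_{\Lambda}^{s,2}$ to $\mathcal{H}_{\Lambda}^{s-\varepsilon,2}$, and then lets $k_1\to\infty$. Your direct composition --- $T_{\sigma}\colon\mathcal{H}_{\Lambda}^{s+m,2}\to\mathcal{H}_{\Lambda}^{s,2}$ bounded, followed by the inclusion $\mathcal{H}_{\Lambda}^{s,2}\hookrightarrow\mathcal{H}_{\Lambda}^{s-\varepsilon,2}$, which is compact because after conjugation by the isometries it is multiplication by $\Lambda(k)^{-\varepsilon}$, a diagonal operator with entries tending to $0$ by \eqref{growth} --- gives the conclusion in one line, avoids the unproved norm convergence $T_{\sigma_{k_1}}\to T_{\sigma}$ asserted in the paper, and avoids any circularity (the paper's compact-embedding statement, Theorem \ref{eq17}, is derived \emph{after} and \emph{from} the present theorem, whereas your diagonal argument is self-contained). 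It is worth noting that this route never uses the hypothesis $\sigma\in M^{m}_{\rho,\Lambda,0}$: because of the built-in loss of $\varepsilon$, the statement holds for every $\sigma\in M^{m}_{\rho,\Lambda}$.

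Two caveats about your main (truncation) route. First, the sharp cutoff $a_R=\mathbf{1}_{\{|k|\le R\}}\,a$ is problematic in the discrete calculus: by the Leibniz formula, $\Delta_k^{\alpha}\bigl(\mathbf{1}_{\{|k|>R\}}a\bigr)$ contains terms $\bigl(\Delta_k^{\beta}\mathbf{1}_{\{|k|>R\}}\bigr)\cdot\Delta_k^{\alpha-\beta}a(\cdot+\beta)$ in which the difference quotients of the indicator are only $O(1)$ on the shell $|k|\approx R$ rather than $O\bigl(\Lambda(k)^{-\rho|\beta|}\bigr)$; the resulting contribution to the $M^{0}_{\rho,\Lambda}$-seminorms of $a-a_R$ is of size $\Lambda(R)^{\rho|\beta|-\varepsilon}$, which need not tend to $0$ when $\rho|\beta|>\varepsilon$. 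A smooth cutoff $\phi(k/R)$, as in the paper, repairs this, since then $\Delta_k^{\beta}\phi(k/R)=O(R^{-|\beta|})=O\bigl(\Lambda(k)^{-\rho|\beta|}\bigr)$ on its support. Second, you correctly isolate the genuine gap shared by your truncation route and the paper's: the step $\|T_{a-a_R}\|_{\mathcal{B}(\ell^2)}\to0$ needs the operator norm to be \emph{controlled} by finitely many symbol seminorms, a quantitative statement that neither Theorem \ref{eq20} nor the paper's proof actually records. Your alternative argument makes all of this unnecessary, so the proposal as a whole stands.
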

	\begin{proof}
		Let $\phi \in C_{0}^{\infty}\left(\mathbb{R}^{n}\right)$ be such that $\phi(x)=1$ for $|x| \leq 1$. For any positive integer $k_1$, let $\sigma_{k_1}$ be the function on $\mathbb{Z}^{n} \times \mathbb{T}^{n}$ defined by
		$$
		\sigma_{k_1}(k, x)=\phi\left(\frac{k}{k_1}\right) \sigma(k, x).
		$$
		Then $T_{\sigma_{k_1}}=\phi_{k_1} T_{\sigma}$, where
		$$
		\phi_{k_1}(k)=\phi\left(\frac{k}{k_1}\right), \quad k \in \mathbb{Z}^{n}.
		$$
		Now $T_{\sigma_{k_1}}$ is compact because $T_{\sigma}$ is a bounded linear operator from   $\mathcal{H}_{ \Lambda}^{s+m, 2} $  into $ \mathcal{H}_{ \Lambda}^{s, 2}$ by Corollary  \ref{2} and the multiplication operator $\phi_{k_1}$ from $ \mathcal{H}_{ \Lambda}^{s, 2} $  into $ \mathcal{H}_{ \Lambda}^{s-\varepsilon, 2}$  is a compact operator by the Sobolev embedding theorem.  Since $T_{\sigma_{k_1}} \rightarrow T_{\sigma}$ as $k_1 \rightarrow \infty$, it implies that $T_{\sigma}$ is compact.
	\end{proof}
	An immediate    consequence of Theorem \ref{eq12} is the following corollary.
	\begin{corollary}\label{ref13}
		For every positive number $\varepsilon, \Lambda(D)^{-\varepsilon}: \ell^{2}\left(\mathbb{Z}^{n}\right) \rightarrow \ell^{2}\left(\mathbb{Z}^{n}\right)$ is a compact operator.
	\end{corollary}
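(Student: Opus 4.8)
The plan is to realize $\Lambda(D)^{-\varepsilon}$ as the pseudo-differential operator $T_\sigma$ whose symbol $\sigma(k,x)=\Lambda(k)^{-\varepsilon}$ is independent of $x$, and then to feed it into Theorem \ref{eq12}. First I would observe that for $f\in\ell^2(\mathbb{Z}^n)$ one has $T_\sigma f(k)=\Lambda(k)^{-\varepsilon}\int_{\mathbb{T}^n}e^{2\pi ik\cdot x}\widehat f(x)\,dx=\Lambda(k)^{-\varepsilon}f(k)$, so that $\Lambda(D)^{-\varepsilon}=T_\sigma$ acts by multiplication by $\Lambda(k)^{-\varepsilon}$ in the lattice variable; in particular $\ell^2(\mathbb{Z}^n)=\mathcal{H}_\Lambda^{0,2}$.

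Next I would factor this operator through a compact Sobolev embedding. By the surjective-isometry proposition above (applied with $s=-\varepsilon$), the map $\Lambda(D)^{-\varepsilon}\colon\mathcal{H}_\Lambda^{-\varepsilon,2}\to\ell^2$ is a bounded (isometric) isomorphism. On the other hand, the inclusion $\iota\colon\ell^2=\mathcal{H}_\Lambda^{0,2}\hookrightarrow\mathcal{H}_\Lambda^{-\varepsilon,2}$ is exactly the operator $T_1$ attached to the trivial symbol $\sigma\equiv 1$, viewed between these two Sobolev scales. Since $\Delta_k^{\alpha}1=0$ and $D_x^{(\beta)}1=0$ whenever $|\alpha|+|\beta|\ge 1$, the only nontrivial estimate is $|1|\le\Lambda(k)^{-\varepsilon/2}\,\Lambda(k)^{\varepsilon/2}$ with coefficient $\Lambda(k)^{-\varepsilon/2}\to 0$ as $|k|\to\infty$; hence $1\in M_{\rho,\Lambda,0}^{\varepsilon/2}(\mathbb{Z}^n\times\mathbb{T}^n)$.

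With this membership in hand, I would apply Theorem \ref{eq12} to $T_1$ with order parameter $m=\varepsilon/2$, base index $s=-\varepsilon/2$, and the free positive parameter of the theorem equal to $\varepsilon/2$: since $s+m=0$ and $s-\varepsilon/2=-\varepsilon$, this yields that $T_1\colon\mathcal{H}_\Lambda^{0,2}\to\mathcal{H}_\Lambda^{-\varepsilon,2}$ is compact, i.e. $\iota$ is compact. Finally, writing $\Lambda(D)^{-\varepsilon}=\big(\Lambda(D)^{-\varepsilon}|_{\mathcal{H}_\Lambda^{-\varepsilon,2}}\big)\circ\iota$ as the compact embedding $\iota$ followed by the bounded isometry, I conclude that $\Lambda(D)^{-\varepsilon}\colon\ell^2(\mathbb{Z}^n)\to\ell^2(\mathbb{Z}^n)$ is compact, being the composition of a compact operator with a bounded one.

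The only point requiring care is the bookkeeping of the Sobolev indices so that the positive-loss hypothesis of Theorem \ref{eq12} is respected; this is what forces me to split the available decay $\Lambda^{-\varepsilon}$ into two halves and to take $s$ and the loss parameter both equal to $\varepsilon/2$. Alternatively, one may apply Theorem \ref{eq12} directly to $\sigma(k,x)=\Lambda(k)^{-\varepsilon}$, after checking via Proposition \ref{3} (with $\tau=\Lambda^{\varepsilon}$) that $\sigma\in M_{\rho,\Lambda}^{-\varepsilon}\subset M_{\rho,\Lambda,0}^{-\varepsilon/2}$ and then choosing $s=\varepsilon/2$ with loss $\varepsilon/2$; the genuine computational content of that route is that powers of the weight function lie in the expected symbol classes, which the factorization argument sidesteps entirely.
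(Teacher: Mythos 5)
Your argument is correct, and both of the routes you sketch land on the same engine the paper invokes, namely Theorem \ref{eq12}; the paper itself offers no written proof beyond calling the corollary ``an immediate consequence'' of that theorem. The intended one-line argument is your \emph{alternative} route: the symbol $\Lambda(k)^{-\varepsilon}$ lies in $M_{\rho,\Lambda}^{-\varepsilon}(\mathbb{Z}^n\times\mathbb{T}^n)\subset M_{\rho,\Lambda,0}^{-\varepsilon/2}(\mathbb{Z}^n\times\mathbb{T}^n)$, so Theorem \ref{eq12} with $m=-\varepsilon/2$, $s=\varepsilon/2$ and loss $\varepsilon/2$ gives compactness of $T_{\Lambda^{-\varepsilon}}=\Lambda(D)^{-\varepsilon}$ from $\mathcal{H}_\Lambda^{0,2}=\ell^2(\mathbb{Z}^n)$ to itself. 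Your primary route is a genuinely different decomposition: you apply Theorem \ref{eq12} only to the constant symbol $1$ (trivially in $M_{\rho,\Lambda,0}^{\varepsilon/2}$) to get compactness of the embedding $\ell^2(\mathbb{Z}^n)\hookrightarrow\mathcal{H}_\Lambda^{-\varepsilon,2}$, and then compose with the surjective isometry $\Lambda(D)^{-\varepsilon}\colon\mathcal{H}_\Lambda^{-\varepsilon,2}\to\ell^2(\mathbb{Z}^n)$. What this buys is exactly what you say: you never have to verify that a genuine power $\Lambda^{-\varepsilon}$ of the weight is a symbol in $M_{\rho,\Lambda}^{-\varepsilon}$ (a fact the paper uses tacitly here and elsewhere, e.g.\ in Corollary \ref{2}, but never proves for non-integer exponents); the cost is an extra appeal to the isometry proposition and the Sobolev embedding. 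Both index bookkeepings ($s+m=0$ and $s-\varepsilon/2$ equal to $-\varepsilon$, resp.\ $0$) check out.
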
 
	
	\begin{theorem} \label{eq17}
		Let $m_1, m_2\in \mathbb{R}$	 such that  $m_1 \leq m_2$. Then  the inclusion map $i: \mathcal{H}_{\Lambda}^{m_2, 2} \to  \mathcal{H}_{ \Lambda}^{m_1, 2}$ is  a compact operator.
	\end{theorem}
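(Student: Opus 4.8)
The plan is to factor the inclusion $i$ through $\ell^2(\mathbb{Z}^n)$ by conjugating with the surjective isometries $\Lambda(D)^s$ established just above, thereby reducing the compactness of $i$ to the compactness of a strictly negative power of $\Lambda(D)$ on $\ell^2(\mathbb{Z}^n)$ supplied by Corollary \ref{ref13}. Since each $\Lambda(D)^s$ is the Fourier multiplier $\mathcal{F}_{\mathbb{Z}^n}^{-1}\Lambda^s\mathcal{F}_{\mathbb{Z}^n}$ on $\mathcal{S}'(\mathbb{Z}^n)$, these multipliers compose additively, that is $\Lambda(D)^{s}\Lambda(D)^{t}=\Lambda(D)^{s+t}$, and this is essentially the only structural fact the factorization needs.

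Concretely, I would set $\varepsilon:=m_2-m_1$ and, for $w\in\mathcal{H}_{\Lambda}^{m_2,2}$, write
$$ i(w)=w=\Lambda(D)^{-m_1}\,\Lambda(D)^{-\varepsilon}\,\Lambda(D)^{m_2}w, $$
the identity being immediate from $-m_1-\varepsilon+m_2=0$ together with $\Lambda(D)^0=I$. One then reads off the mapping properties of the three factors: $\Lambda(D)^{m_2}\colon\mathcal{H}_{\Lambda}^{m_2,2}\to\ell^2(\mathbb{Z}^n)$ is a (bounded) isometry, $\Lambda(D)^{-m_1}\colon\ell^2(\mathbb{Z}^n)\to\mathcal{H}_{\Lambda}^{m_1,2}$ is its inverse isometry and hence bounded, while the middle factor $\Lambda(D)^{-\varepsilon}\colon\ell^2(\mathbb{Z}^n)\to\ell^2(\mathbb{Z}^n)$ is compact by Corollary \ref{ref13}, provided $\varepsilon>0$. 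Because the compact operators form a two-sided ideal inside the bounded operators, the composition of a bounded, a compact, and a bounded operator is again compact, whence $i$ is compact.

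The one genuine obstacle is the boundary case $m_1=m_2$: there $\varepsilon=0$, the middle factor collapses to the identity, Corollary \ref{ref13} no longer applies, and $i$ becomes the identity embedding of the infinite-dimensional space $\mathcal{H}_{\Lambda}^{m_1,2}$, which is bounded but \emph{not} compact. Consequently the compactness conclusion is effective precisely when $m_1<m_2$, and I would read the statement with strict inequality (the equality case being merely the continuous, non-compact identity). Apart from this caveat, the argument requires no estimates on $\Lambda$ beyond the additivity $\Lambda(D)^s\Lambda(D)^t=\Lambda(D)^{s+t}$ and the isometry property, so the proof is short once the factorization is in place.
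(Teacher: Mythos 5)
Your proof is correct and follows essentially the same route as the paper: both arguments factor the inclusion through $\ell^2(\mathbb{Z}^n)$ so that the compactness of $\Lambda(D)^{-\varepsilon}$ from Corollary \ref{ref13} together with the two-sided ideal property of compact operators does all the work; your factorization $i=\Lambda(D)^{-m_1}\,\Lambda(D)^{-(m_2-m_1)}\,\Lambda(D)^{m_2}$ is in fact cleaner than the paper's, which introduces an auxiliary operator $A=\Lambda(D)^{m_1}\Lambda(D)^{\varepsilon}$ and an intermediate Sobolev space $\mathcal{H}_{\Lambda}^{m_2-m_1-\varepsilon,2}$ before arriving at the same conclusion. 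Your caveat about the boundary case is well taken: for $m_1=m_2$ the inclusion is the identity on an infinite-dimensional space and cannot be compact, so the hypothesis $m_1\le m_2$ should read $m_1<m_2$ --- an assumption the paper's proof also uses tacitly when it chooses $\varepsilon>0$ with $m_2-m_1-\varepsilon>0$.
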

	\begin{proof}
		Let $\epsilon>0$ be such that $ m_2-m_1-\epsilon>0 $. 
		Let us consider    the   operator $A= \Lambda(D)^{m_1} \Lambda(D)^{\varepsilon}.$ Then $A$ is a discrete pseudo-differential operator of order $m_1+\epsilon$.  Since $A:  \mathcal{H}_{\Lambda}^{m_2, 2} \to  \mathcal{H}_{ \Lambda}^{m_2-m_1-\varepsilon, 2}$ is a bounded linear operator by Theorem \ref{eq12} and $\Lambda(D)^{-\varepsilon}: \ell^{2}\left(\mathbb{Z}^{n}\right) \rightarrow \ell^{2}\left(\mathbb{Z}^{n}\right)$ is a compact operator by Corollary \ref{ref13}, it  follows that the composition $\Lambda(D)^{-\varepsilon} i A$ of the mappings
		\begin{align*}
			&A:  \mathcal{H}_{\Lambda}^{m_2, 2} \to  \mathcal{H}_{ \Lambda}^{m_2-m_1-\varepsilon, 2} \\
			&	i:\mathcal{H}_{ \Lambda}^{m_2-m_1-\varepsilon, 2}  \hookrightarrow \ell^{2}\left(\mathbb{Z}^{n}\right) 
		\end{align*}
		and
		$$
		\Lambda(D)^{-\varepsilon}: \ell^{2}\left(\mathbb{Z}^{n}\right) \rightarrow \ell^{2}\left(\mathbb{Z}^{n}\right) 
		$$
		is a compact operator. Thus the linear operator 
		$$ \mathcal{H}_{\Lambda}^{m_2, 2} \ni u \to  \Lambda(D)^{-\varepsilon} i A u= \Lambda(D)^{\varepsilon}u\in \ell^{2}\left(\mathbb{Z}^{n}\right) $$ is  a compact.
	\end{proof}


	\begin{definition}		A operator $T$ from a Banach space $X$ to another  Banach space $Y$ is said to be closable if and only if for any sequence $\{  x_k\} $ in $Dom(T)$  such that $x_k \to 0 $ in $X$ and $T(x_k) \to y $ as $k\to \infty $, then $y=0.$	\end{definition}
	\begin{proposition} Let $\sigma \in M_{\rho,  \Lambda}^{m}(\mathbb{Z}^n\times \mathbb{T}^n) $. The linear operator $T_{\sigma}: \ell^{2}(\mathbb{Z}^n) \rightarrow \ell^{2}(\mathbb{Z}^n)$ is closable with dense domain  $\mathcal{S}(\mathbb{Z}^n)$ in $ \ell^{2}(\mathbb{Z}^n)$.
	\end{proposition}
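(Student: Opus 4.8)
The statement splits into two independent claims: that the domain $\mathcal{S}(\mathbb{Z}^n)$ is dense in $\ell^2(\mathbb{Z}^n)$, and that $T_\sigma$ is closable on this domain. The density is the easy half. Every finitely supported function on $\mathbb{Z}^n$ lies in $\mathcal{S}(\mathbb{Z}^n)$, and such functions are already dense in $\ell^2(\mathbb{Z}^n)$, so I would dispose of it in a single line.

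For closability I plan to verify the sequential criterion of the preceding definition directly, using the adjoint produced in Theorem \ref{s6}. Suppose $\{x_k\}\subset \mathcal{S}(\mathbb{Z}^n)$ satisfies $x_k\to 0$ and $T_\sigma x_k\to y$ in $\ell^2(\mathbb{Z}^n)$; the goal is to force $y=0$. The idea is to test $y$ against an arbitrary $g\in\mathcal{S}(\mathbb{Z}^n)$ and to transfer $T_\sigma$ onto $g$ through the adjoint relation $\langle T_\sigma x_k, g\rangle_{\ell^2(\mathbb{Z}^n)}=\langle x_k, T_{\sigma^*}g\rangle_{\ell^2(\mathbb{Z}^n)}$, where $\sigma^*\in M_{\rho,\Lambda}^m(\mathbb{Z}^n\times\mathbb{T}^n)$ is the adjoint symbol supplied by Theorem \ref{s6}. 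Passing to the limit then gives
$$
\langle y, g\rangle_{\ell^2(\mathbb{Z}^n)}=\lim_{k\to\infty}\langle T_\sigma x_k, g\rangle_{\ell^2(\mathbb{Z}^n)}=\lim_{k\to\infty}\langle x_k, T_{\sigma^*}g\rangle_{\ell^2(\mathbb{Z}^n)}=0,
$$
the last equality holding because $x_k\to 0$ while $T_{\sigma^*}g$ is a fixed element of $\ell^2(\mathbb{Z}^n)$. Since $g$ ranges over the dense set $\mathcal{S}(\mathbb{Z}^n)$, this forces $y=0$, and closability follows.

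The one point requiring care, and the main obstacle, is justifying that $T_{\sigma^*}g\in\ell^2(\mathbb{Z}^n)$ for every $g\in\mathcal{S}(\mathbb{Z}^n)$; without this the final inner product is not even meaningful and the limit cannot be taken. Here I would invoke Corollary \ref{2}: since $\sigma^*\in M_{\rho,\Lambda}^m(\mathbb{Z}^n\times\mathbb{T}^n)$, the operator $T_{\sigma^*}$ is bounded from $\mathcal{H}_{\Lambda}^{s+m,2}$ to $\mathcal{H}_{\Lambda}^{s,2}$ for every $s\in\mathbb{R}$, and because $g\in\mathcal{S}(\mathbb{Z}^n)\subset\bigcap_{s}\mathcal{H}_{\Lambda}^{s,2}$ it follows that $T_{\sigma^*}g$ lies in $\mathcal{H}_{\Lambda}^{s,2}$ for all $s$, in particular in $\mathcal{H}_{\Lambda}^{0,2}=\ell^2(\mathbb{Z}^n)$. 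With this membership established, the convergence $\langle x_k, T_{\sigma^*}g\rangle_{\ell^2(\mathbb{Z}^n)}\to 0$ is immediate from the Cauchy--Schwarz inequality, and the argument is complete.
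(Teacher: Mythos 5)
Your argument is correct and follows essentially the same route as the paper: test $T_\sigma x_k$ against $\psi\in\mathcal{S}(\mathbb{Z}^n)$, move $T_\sigma$ to the other side via the adjoint, pass to the limit, and conclude by density of $\mathcal{S}(\mathbb{Z}^n)$ in $\ell^2(\mathbb{Z}^n)$. The only difference is that you explicitly justify $T_{\sigma^*}g\in\ell^2(\mathbb{Z}^n)$ via Theorem \ref{s6} and Corollary \ref{2}, a point the paper's proof takes for granted.
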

	\begin{proof}	Let $\left(\phi_{k}\right)_{k \in \mathbb{N}}$ be a sequence in $\mathcal{S}(\mathbb{Z}^n)$ such that $\phi_{k} \rightarrow 0$ and $T_{\sigma} \phi_{k} \rightarrow f$ as $k \rightarrow \infty$  for some $f$ in	$\ell^{2}(\mathbb{Z}^n)$.  We have to show that $f=0$. 		For any $\psi\in \mathcal{S}(\mathbb{Z}^n)$, we have 	
		\begin{align*}		\langle T_{\sigma} \phi_{k}, \psi\rangle =\langle \phi_{k}, T_{\sigma}^*  \psi\rangle,		\end{align*}	
		where $T_{\sigma}^* $ is the adjoint of $T_{\sigma}$.	Taking limit as $k \rightarrow \infty$, we have $\langle f, \psi\rangle =0$ for all $\psi \in  \mathcal{S}(\mathbb{Z}^n)$. Since  $ \mathcal{S}(\mathbb{Z}^n)$ is dense in   $ \ell^{2}(\mathbb{Z}^n)$, it follows that $f=0.$ Hence $T_{\sigma}: \ell^{2}(\mathbb{Z}^n) \rightarrow \ell^{2}(\mathbb{Z}^n)$ is closable.	\end{proof}

	For $\sigma \in M_{\rho,  \Lambda}^{m}(\mathbb{Z}^n\times \mathbb{T}^n)$, consider the pseudo differential operator $T_{\sigma}: \ell^{2}(\mathbb{Z}^n) \rightarrow \ell^{2}(\mathbb{Z}^n)$ with domain $\mathcal{S}(\mathbb{Z}^n)$. Then by the previous proposition  $T_{\sigma}$  has a closed extension. Let $T_{\sigma, 0}$ be the minimal operator for $T_{\sigma}$. Let us recall, the domain ${Dom}\left(T_{\sigma, 0}\right)$ of $T_{\sigma, 0}$ consists of all functions $g \in \ell^{2}(\mathbb{Z}^n)$ for which there exists a sequence $\left(\phi_{k}\right)_{k \in \mathbb{N}}$ in $\mathcal{S}(\mathbb{Z}^n)$ such that $\phi_{k} \rightarrow g$ in $\ell^{2}(\mathbb{Z}^n)$ and $T_{\sigma} \phi_{k} \rightarrow f$	for some $f \in \ell^{2}(\mathbb{Z}^n)$ as $k \rightarrow \infty$.  Moreover, $T_{\sigma, 0}g=f$.	Further,  it can be shown that \(f\) does not depend on the choice of \(\left(\phi_{k}\right)_{k \in \mathbb{N}}\) and \(T_{\sigma, 0} g=f .\)	
	\begin{definition}\label{8}	Let $T_{\sigma, 1}$  be a  linear operator on $\ell^{2}(\mathbb{Z}^n)$. Let $f$ and $g$ be  two function in $\ell^{2}(\mathbb{Z}^n)$. Then we say that $g \in {Dom}\left(T_{\sigma, 1}\right)$ and $T_{\sigma, 1} g=f $	if and only if	
		\begin{align}\label{7}	
			\langle g, T_{\sigma}^{*} \psi\rangle =\langle f, \psi \rangle,~~\quad\forall \psi \in \mathcal{S}(\mathbb{Z}^n)
		\end{align}	
		where $T_{\sigma}^{*}$ is the adjoint of $T_{\sigma}.$	\end{definition}	
	\begin{proposition}\label{23}	$T_{\sigma, 1}$ is a closed linear operator from $\ell^{2}(\mathbb{Z}^n)$ into $\ell^{2}(\mathbb{Z}^n)$ with domain ${Dom}\left(T_{\sigma, 1}\right)$ containing $\mathcal{S}(\mathbb{Z}^n).$  Moreover, $T_{\sigma,1} u=T_{\sigma}u$  for $u\in {Dom}\left(T_{\sigma, 1}\right)$ in distribution sense.	
	\end{proposition}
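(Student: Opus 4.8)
The plan is to check, in sequence, that the assignment $g \mapsto f$ determined by \eqref{7} is single-valued and linear, that $\mathcal{S}(\mathbb{Z}^n)$ lies in $\mathrm{Dom}(T_{\sigma,1})$ with $T_{\sigma,1}$ agreeing there with $T_\sigma$, and finally that the graph of $T_{\sigma,1}$ is closed; the distributional identity is then read off directly from \eqref{7}. Throughout I would use only two facts: that $\mathcal{S}(\mathbb{Z}^n)$ is dense in $\ell^2(\mathbb{Z}^n)$, and that the adjoint $T_\sigma^* = T_{\sigma^*}$ produced by Theorem \ref{s6} has symbol in $M_{\rho,\Lambda}^m(\mathbb{Z}^n\times\mathbb{T}^n)$ and hence maps $\mathcal{S}(\mathbb{Z}^n)$ into itself.

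First I would argue well-definedness: if $f_1, f_2 \in \ell^2(\mathbb{Z}^n)$ both satisfy $\langle g, T_\sigma^*\psi\rangle = \langle f_i, \psi\rangle$ for all $\psi \in \mathcal{S}(\mathbb{Z}^n)$, then $\langle f_1 - f_2, \psi\rangle = 0$ for every such $\psi$, and density of $\mathcal{S}(\mathbb{Z}^n)$ in $\ell^2(\mathbb{Z}^n)$ forces $f_1 = f_2$; linearity of $g\mapsto f$ then follows from the bilinearity of the pairing. To see $\mathcal{S}(\mathbb{Z}^n)\subseteq\mathrm{Dom}(T_{\sigma,1})$, fix $u\in\mathcal{S}(\mathbb{Z}^n)$. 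Since $T_\sigma^*\psi\in\mathcal{S}(\mathbb{Z}^n)$ for every $\psi\in\mathcal{S}(\mathbb{Z}^n)$, the defining relation of the adjoint gives $\langle u, T_\sigma^*\psi\rangle = \langle T_\sigma u, \psi\rangle$; as $T_\sigma u\in\mathcal{S}(\mathbb{Z}^n)\subseteq\ell^2(\mathbb{Z}^n)$, the choice $f = T_\sigma u$ verifies \eqref{7}, so $u\in\mathrm{Dom}(T_{\sigma,1})$ and $T_{\sigma,1}u = T_\sigma u$. Exactly the same reading of \eqref{7} shows that for an arbitrary $g\in\mathrm{Dom}(T_{\sigma,1})$ the element $f = T_{\sigma,1}g$ is precisely the action of $T_\sigma$ on $g$ tested against $\mathcal{S}(\mathbb{Z}^n)$ through $T_\sigma^*$, that is, $T_{\sigma,1}g = T_\sigma g$ in the sense of distributions.

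Lastly I would establish closedness. Let $(g_k)\subseteq\mathrm{Dom}(T_{\sigma,1})$ with $g_k\to g$ and $f_k := T_{\sigma,1}g_k \to f$ in $\ell^2(\mathbb{Z}^n)$. For a fixed $\psi\in\mathcal{S}(\mathbb{Z}^n)$ both $T_\sigma^*\psi$ and $\psi$ belong to $\ell^2(\mathbb{Z}^n)$, so continuity of the inner product lets me pass to the limit on each side of $\langle g_k, T_\sigma^*\psi\rangle = \langle f_k, \psi\rangle$, yielding $\langle g, T_\sigma^*\psi\rangle = \langle f, \psi\rangle$ for every $\psi\in\mathcal{S}(\mathbb{Z}^n)$. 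By Definition \ref{8} this means $g\in\mathrm{Dom}(T_{\sigma,1})$ and $T_{\sigma,1}g = f$, so $T_{\sigma,1}$ is closed. There is no serious obstacle here; the only point requiring a little care is confirming the mapping property $T_\sigma^*\colon\mathcal{S}(\mathbb{Z}^n)\to\mathcal{S}(\mathbb{Z}^n)$, which follows from the polynomial symbol bounds built into $M_{\rho,\Lambda}^m$ together with repeated integration by parts against the oscillatory kernel $e^{2\pi i k\cdot x}$ on $\mathbb{T}^n$, producing decay faster than any power of $|k|$.
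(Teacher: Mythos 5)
Your proposal is correct and follows essentially the same route as the paper: the closedness is obtained exactly as there, by passing to the limit in the defining relation $\langle g, T_{\sigma}^{*}\psi\rangle = \langle f,\psi\rangle$ using continuity of the $\ell^{2}$ inner product. You merely fill in details the paper leaves as ``clear'' (well-definedness of $g\mapsto f$ via density of $\mathcal{S}(\mathbb{Z}^n)$, the containment $\mathcal{S}(\mathbb{Z}^n)\subseteq \mathrm{Dom}(T_{\sigma,1})$, and the distributional identity $T_{\sigma,1}u=T_{\sigma}u$), which is a welcome addition rather than a deviation.
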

	\begin{proof}		From the above definition, clearly $\mathcal{S}(\mathbb{Z}^n) \subset {Dom}\left(T_{\sigma, 1}\right)$. 	Let $(\phi_{j})_{j\in \mathbb{N}}$ be a sequence  in ${Dom}\left(T_{\sigma, 1}\right)$ such that $\phi_{j} \rightarrow u$ and $T_{\sigma, 1} \phi_{j} \rightarrow f$ in \(\ell^{2}\left(\mathbb{Z}^{n}\right)\) for	some \(u\) and \(f\) in \(\ell^{2}\left(\mathbb{Z}^{n}\right)\) as \(j \rightarrow \infty\). Then by the relation  (\ref{7}), we have	$$\langle\phi_{j}, T_{\sigma}^{*} \psi\rangle=\langle T_{\sigma, 1} \phi_{j}, \psi\rangle, ~~\quad \psi \in \mathcal{S}\left(\mathbb{Z}^{n}\right).$$ Letting \(j \rightarrow \infty\)  we have $	\langle u, T_{\sigma}^{*} \psi\rangle=\langle  f, \psi\rangle$ for all $\psi \in \mathcal{S} (\mathbb{Z}^{n} ).$ By definition \ref{8}, we have  $u\in {Dom}\left(T_{\sigma, 1}\right)$   and  $T_{\sigma, 1}u=f.$   Hence $T_{\sigma, 1}$ is a closed operator from $\ell^{2}(\mathbb{Z}^n)$ into $\ell^{2}(\mathbb{Z}^n)$ with domain ${Dom}\left(T_{\sigma, 1}\right)$ containing $\mathcal{S}(\mathbb{Z}^n).$ It can be prove easily that for $u\in {Dom}\left(T_{\sigma, 1}\right)$, $T_{\sigma,1} u=T_{\sigma}u$.	\end{proof}	
	\begin{proposition}\label{13}	
		$T_{\sigma, 1}$ is an extension of $T_{\sigma, 0}.$ 	
	\end{proposition}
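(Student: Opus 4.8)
The plan is to verify directly the two things that \emph{extension} requires: that $\mathrm{Dom}(T_{\sigma,0}) \subseteq \mathrm{Dom}(T_{\sigma,1})$, and that $T_{\sigma,1}g = T_{\sigma,0}g$ for every $g$ in the smaller domain. So I would start by fixing $g \in \mathrm{Dom}(T_{\sigma,0})$. By the description of the minimal operator recalled just before Definition \ref{8}, there is a sequence $(\phi_k)_{k\in\mathbb{N}} \subset \mathcal{S}(\mathbb{Z}^n)$ with $\phi_k \to g$ in $\ell^2(\mathbb{Z}^n)$ and $T_\sigma \phi_k \to f$ in $\ell^2(\mathbb{Z}^n)$, where $f := T_{\sigma,0}g$. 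The goal is then to show that this same $f$ satisfies the defining relation \eqref{7} for $T_{\sigma,1}$, namely $\langle g, T_\sigma^* \psi\rangle = \langle f, \psi\rangle$ for all $\psi \in \mathcal{S}(\mathbb{Z}^n)$.

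The key step is to pair against an arbitrary test function and pass to the limit. For each fixed $k$ and each $\psi \in \mathcal{S}(\mathbb{Z}^n)$, since $\phi_k \in \mathcal{S}(\mathbb{Z}^n)$ the definition of the adjoint yields
$$
\langle T_\sigma \phi_k, \psi\rangle = \langle \phi_k, T_\sigma^* \psi\rangle .
$$
Here $T_\sigma^*\psi$ is a bona fide element of $\ell^2(\mathbb{Z}^n)$: by Theorem \ref{s6} the adjoint $T_\sigma^*$ is again a pseudo-differential operator $T_{\sigma^*}$ with $\sigma^* \in M_{\rho,\Lambda}^{m}(\mathbb{Z}^n\times\mathbb{T}^n)$, so it carries $\mathcal{S}(\mathbb{Z}^n)$ into $\ell^2(\mathbb{Z}^n)$ and both inner products above are well defined and continuous in their first argument. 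Letting $k \to \infty$ and using $\phi_k \to g$, $T_\sigma\phi_k \to f$ together with continuity of the $\ell^2$ inner product gives
$$
\langle f, \psi\rangle = \langle g, T_\sigma^* \psi\rangle, \qquad \psi \in \mathcal{S}(\mathbb{Z}^n).
$$
By Definition \ref{8} this means exactly that $g \in \mathrm{Dom}(T_{\sigma,1})$ and $T_{\sigma,1}g = f = T_{\sigma,0}g$, which is the desired conclusion.

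There is no serious obstacle here; the argument is a soft limiting argument resting on the continuity of the $\ell^2$ pairing. The only point that must be checked, rather than assumed, is that $T_\sigma^*\psi$ is genuinely a square-summable function so that the two pairings make sense and behave continuously under the limit $k\to\infty$. This is supplied by the adjoint formula already established in Theorem \ref{s6}, which identifies $T_\sigma^*$ as a pseudo-differential operator of the same symbol class, hence bounded between the relevant spaces by Corollary \ref{2} and mapping $\mathcal{S}(\mathbb{Z}^n)$ into $\ell^2(\mathbb{Z}^n)$. Once that is in place, the independence of $f$ from the approximating sequence (noted in the construction of $T_{\sigma,0}$) guarantees that $T_{\sigma,1}g$ is unambiguously equal to $T_{\sigma,0}g$, completing the proof that $T_{\sigma,1}$ extends $T_{\sigma,0}$.
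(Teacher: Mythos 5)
Your proof is correct and follows essentially the same route as the paper: take the approximating sequence from the definition of $\mathrm{Dom}(T_{\sigma,0})$, use $\langle T_\sigma\phi_k,\psi\rangle=\langle\phi_k,T_\sigma^*\psi\rangle$ for $\psi\in\mathcal{S}(\mathbb{Z}^n)$, and pass to the limit to verify the defining relation of $T_{\sigma,1}$. Your extra check that $T_\sigma^*\psi\in\ell^2(\mathbb{Z}^n)$ via the adjoint calculus is a reasonable addition that the paper leaves implicit.
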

	\begin{proof}		Let \(u \in {Dom}\left(T_{\sigma, 0}\right)\) and \(T_{\sigma, 0} u=f .\) Then  by the definition of  ${Dom}\left(T_{\sigma, 0}\right)$, there exists a sequence $(\phi_{j})_{j\in \mathbb{N}}$ in		\(\mathcal{S}(\mathbb{Z}^n)\) such that \(\phi_{j} \rightarrow u\) and \(T_{\sigma} \phi_{j} \rightarrow f\) in \(\ell^{2}\left(\mathbb{Z}^{n}\right)\) as \(j \rightarrow \infty \) for some $f\in \ell^2 (\mathbb{Z}^n)$. Since $	\langle \phi_{j}, T_{\sigma}^{*}\psi \rangle=\langle T_{\sigma} \phi_{j}, \psi\rangle$		for all $\psi \in \mathcal{S}(\mathbb{Z}^n)$,  letting \(j \rightarrow \infty\), we have $$		\langle u, T_{\sigma}^{*} \psi\rangle=\langle  f, \psi\rangle, \quad \text{for all} ~\psi \in \mathcal{S}\left(\mathbb{Z}^{n}\right).$$ Thus,  from  the definition \ref{8}, we get $ u\in {Dom}\left(T_{\sigma, 1}\right)$ and $T_{\sigma,1} u=f$. Hence $T_{\sigma, 1}$ is an extension of $T_{\sigma, 0}.$ 	\end{proof}	
	\begin{proposition}		$\mathcal{S}(\mathbb{Z}^n) \subseteq  Dom(T_{\sigma,1} ^ { t } )$, where $T_{\sigma,1} ^ { t } $ is the transpose of $T_{\sigma, 1}$.	\end{proposition}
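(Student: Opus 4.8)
The plan is to exhibit explicitly, for each $f\in\mathcal{S}(\mathbb{Z}^n)$, the element of $\ell^2(\mathbb{Z}^n)$ that represents the transpose pairing, thereby placing $f$ in $\mathrm{Dom}(T_{\sigma,1}^t)$. Since $\mathcal{S}(\mathbb{Z}^n)\subseteq\mathrm{Dom}(T_{\sigma,1})$ by Proposition \ref{23}, the domain of $T_{\sigma,1}$ is dense and its transpose is well defined; by definition $f\in\mathrm{Dom}(T_{\sigma,1}^t)$ exactly when there is some $h\in\ell^2(\mathbb{Z}^n)$ with
\[
\sum_{k\in\mathbb{Z}^n} h(k)\,g(k)=\sum_{k\in\mathbb{Z}^n} f(k)\,(T_{\sigma,1}g)(k),\qquad g\in\mathrm{Dom}(T_{\sigma,1}),
\]
in which case $T_{\sigma,1}^t f=h$. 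My candidate is $h:=\overline{T_\sigma^*\,\overline f}$, which by Theorem \ref{s6} equals $\overline{T_{\sigma^*}\overline f}$ with $\sigma^*\in M_{\rho,\Lambda}^m(\mathbb{Z}^n\times\mathbb{T}^n)$, and which coincides with $T_{\sigma^t}f$ through the conjugation identity $T_\sigma^t=C T_\sigma^* C$, where $Cu:=\overline u$.

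First I would check that $h\in\ell^2(\mathbb{Z}^n)$. As $f\in\mathcal{S}(\mathbb{Z}^n)$, also $\overline f\in\mathcal{S}(\mathbb{Z}^n)\subseteq\mathcal{H}_\Lambda^{m,2}$ (Schwartz functions lie in every weighted Sobolev space by rapid decay), and since $\sigma^*\in M_{\rho,\Lambda}^m(\mathbb{Z}^n\times\mathbb{T}^n)$ Corollary \ref{2} gives that $T_{\sigma^*}\colon\mathcal{H}_\Lambda^{m,2}\to\mathcal{H}_\Lambda^{0,2}=\ell^2(\mathbb{Z}^n)$ is bounded. Hence $T_{\sigma^*}\overline f\in\ell^2(\mathbb{Z}^n)$ and so is its conjugate $h$.

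The heart of the argument is to verify the transpose identity by routing it through the weak definition (\ref{7}) of the maximal operator. Fixing $g\in\mathrm{Dom}(T_{\sigma,1})$ and writing $F:=T_{\sigma,1}g$, I would insert the admissible test function $\psi=\overline f\in\mathcal{S}(\mathbb{Z}^n)$ into (\ref{7}) to obtain $\langle g,T_\sigma^*\overline f\rangle=\langle F,\overline f\rangle$. The right-hand side unwinds to the bilinear pairing $\langle F,\overline f\rangle=\sum_{k} F(k)f(k)=\sum_{k} f(k)(T_{\sigma,1}g)(k)$, while on the left, using $(T_{\sigma^t}f)(k)=\overline{(T_\sigma^*\overline f)(k)}$, one has $\langle g,T_\sigma^*\overline f\rangle=\sum_{k} g(k)\,\overline{(T_\sigma^*\overline f)(k)}=\sum_{k} h(k)g(k)$. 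Equating the two expressions yields the required identity with this $h$, so $f\in\mathrm{Dom}(T_{\sigma,1}^t)$ and $T_{\sigma,1}^t f=T_{\sigma^t}f$.

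I expect the only genuine difficulty to be bookkeeping rather than analysis: the maximal operator $T_{\sigma,1}$ is defined weakly against the adjoint $T_\sigma^*$ and its domain contains non-smooth $g$, so one cannot apply the Schwartz-level transpose formula $\sum_k(T_\sigma^t f)(k)g(k)=\sum_k f(k)(T_\sigma g)(k)$ directly. The argument must instead pass through (\ref{7}), and the sole subtlety is keeping straight which pairing is in force at each step — the sesquilinear inner product $\langle\cdot,\cdot\rangle$ used to define $T_{\sigma,1}$ versus the bilinear pairing $\sum_k(\cdot)(\cdot)$ defining the transpose — with the passage between them mediated by complex conjugation and the identity $T_\sigma^t=C T_\sigma^* C$.
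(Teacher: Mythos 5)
Your argument is correct, and it is worth noting that the paper states this proposition with no proof at all (it appears between Proposition 5.12 and the maximality statement, both given without justification), so there is nothing in the text to compare against; your write-up supplies exactly the missing details. The two points that genuinely need checking are the ones you isolate: (i) that the candidate $h=\overline{T_{\sigma}^{*}\overline{f}}$ lies in $\ell^{2}(\mathbb{Z}^n)$, which follows from Theorem \ref{s6} ($\sigma^{*}\in M_{\rho,\Lambda}^{m}$), the inclusion $\mathcal{S}(\mathbb{Z}^n)\subseteq\mathcal{H}_{\Lambda}^{m,2}$, and Corollary \ref{2} with $s=0$; and (ii) that the bilinear transpose identity for all $g\in Dom(T_{\sigma,1})$, not just smooth $g$, is obtained by substituting the admissible test function $\psi=\overline{f}$ into the weak definition (\ref{7}) and converting the sesquilinear pairing into the bilinear one by conjugation. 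Both steps are handled correctly, and the identification $T_{\sigma,1}^{t}f=\overline{T_{\sigma}^{*}\overline{f}}=T_{\sigma^{t}}f$ is consistent with the explicit kernel formulas in the paper's adjoint and transpose theorems (though it is not needed for the inclusion itself). The only caveat is cosmetic: the paper never formally defines $Dom(T_{\sigma,1}^{t})$ for the unbounded closed operator $T_{\sigma,1}$, so you are implicitly adopting the standard definition via the bilinear pairing against all of $Dom(T_{\sigma,1})$; since $\mathcal{S}(\mathbb{Z}^n)\subseteq Dom(T_{\sigma,1})$ makes that domain dense, this is the natural reading and your proof is complete under it.
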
	
	\begin{proposition}	$T_{\sigma, 1}$ is the largest closed extension of $T_{\sigma}$ in the sense that if $S$ is any closed extension of $T_{\sigma}$ such that $\mathcal{S}(\mathbb{Z}^n) \subseteq {Dom}  (S^{t} ),$ then $T_{\sigma, 1}$ is an extension of $S$. Such $T_{\sigma, 1}$ is called the maximal operator of $T_{\sigma}.$		\end{proposition}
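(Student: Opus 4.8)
The plan is to show that any closed extension $S$ of $T_\sigma$ subject to the constraint $\mathcal{S}(\mathbb{Z}^n) \subseteq {Dom}(S^t)$ is dominated by $T_{\sigma,1}$, i.e. that ${Dom}(S) \subseteq {Dom}(T_{\sigma,1})$ and the two operators agree on ${Dom}(S)$. By Definition \ref{8}, it suffices to fix an arbitrary $u \in {Dom}(S)$ with $Su = f$ and verify the defining duality relation
$$
\langle u, T_{\sigma}^{*} \psi\rangle = \langle f, \psi\rangle, \qquad \forall \psi \in \mathcal{S}(\mathbb{Z}^n),
$$
since this is precisely the condition placing $u$ in ${Dom}(T_{\sigma,1})$ with $T_{\sigma,1}u = f$. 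Thus the whole proposition reduces to producing this identity from the extension property of $S$ and the hypothesis on ${Dom}(S^t)$.

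The first step is to pin down the action of $S^t$ on test functions. Because $S$ extends $T_\sigma$ and ${Dom}(T_\sigma) = \mathcal{S}(\mathbb{Z}^n)$, for every $\psi \in \mathcal{S}(\mathbb{Z}^n) \subseteq {Dom}(S^t)$ and every $v \in \mathcal{S}(\mathbb{Z}^n)$ the defining relation of the transpose together with $Sv = T_\sigma v$ gives
$$
\langle S^{t}\psi, v\rangle = \langle \psi, S v\rangle = \langle \psi, T_{\sigma} v\rangle = \langle T_{\sigma}^{t}\psi, v\rangle .
$$
Since $\mathcal{S}(\mathbb{Z}^n)$ is dense in $\ell^2(\mathbb{Z}^n)$, this forces $S^{t}\psi = T_{\sigma}^{t}\psi$ for all $\psi \in \mathcal{S}(\mathbb{Z}^n)$; that is, $S^t$ must coincide on test functions with the formal transpose $T_{\sigma^t}$, which again lands in $\mathcal{S}(\mathbb{Z}^n)$ by the Transpose Theorem. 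With this identification, applying the transpose duality to $u \in {Dom}(S)$ yields, for every $\psi \in \mathcal{S}(\mathbb{Z}^n)$,
$$
\langle f, \psi\rangle = \langle S u, \psi\rangle = \langle u, S^{t}\psi\rangle = \langle u, T_{\sigma}^{t}\psi\rangle .
$$
Relating the transpose $T_\sigma^t$ to the adjoint $T_\sigma^*$ through complex conjugation (equivalently, carrying the argument out with $S^*$ in place of $S^t$, which is legitimate as $S$ is densely defined), this reads $\langle u, T_{\sigma}^{*}\psi\rangle = \langle f, \psi\rangle$, so $u \in {Dom}(T_{\sigma,1})$ and $T_{\sigma,1}u = f$. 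Hence $T_{\sigma,1}$ is an extension of $S$. Since $T_{\sigma,1}$ is itself a closed extension of $T_\sigma$ by Propositions \ref{23} and \ref{13} and satisfies $\mathcal{S}(\mathbb{Z}^n) \subseteq {Dom}(T_{\sigma,1}^t)$, it is the largest extension in the stated class, which is the maximality claim.

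The main obstacle — indeed the only delicate point — is the identification step: using the hypothesis $\mathcal{S}(\mathbb{Z}^n) \subseteq {Dom}(S^t)$ together with the extension property to force $S^t$ to agree with the known formal transpose on $\mathcal{S}(\mathbb{Z}^n)$, and then cleanly reconciling the bilinear transpose pairing appearing in the hypothesis with the sesquilinear adjoint pairing used in Definition \ref{8}. Once the two pairing conventions are matched via conjugation, everything else follows from the density of $\mathcal{S}(\mathbb{Z}^n)$ in $\ell^2(\mathbb{Z}^n)$ and the definitions of the minimal and maximal operators, with no further analytic input required.
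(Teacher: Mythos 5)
Your argument is correct; note that the paper states this proposition (and the one preceding it) without proof, so you are supplying the omitted argument rather than duplicating one. Your route is the standard one: use the hypothesis $\mathcal{S}(\mathbb{Z}^n)\subseteq \mathrm{Dom}(S^t)$ together with $S|_{\mathcal{S}(\mathbb{Z}^n)}=T_\sigma$ and density of $\mathcal{S}(\mathbb{Z}^n)$ in $\ell^2(\mathbb{Z}^n)$ to force $S^t\psi=T_\sigma^t\psi$ on test functions, then transfer the duality relation to any $u\in \mathrm{Dom}(S)$ to verify the defining condition of Definition \ref{8}; the only point needing care is exactly the one you flag, namely reconciling the bilinear transpose pairing with the sesquilinear adjoint pairing via conjugation (using that $\mathcal{S}(\mathbb{Z}^n)$ is stable under complex conjugation), and you handle it correctly.
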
	
	\begin{corollary}	
		In view of the above we call $T_{\sigma, 0} $ and $T_{\sigma, 1}$  are the minimal and maximal pseudo differential operator of $T_\sigma.$
	\end{corollary}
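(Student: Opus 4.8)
The plan is to unwind the defining duality of the maximal operator given in Definition \ref{8} and reduce the maximality statement to the elementary principle that \emph{enlarging an operator restricts its transpose}. Concretely, let $S$ be any closed extension of $T_\sigma$ whose domain contains $\mathcal{S}(\mathbb{Z}^n)$ (on which $S$ agrees with $T_\sigma$) and which satisfies $\mathcal{S}(\mathbb{Z}^n)\subseteq \mathrm{Dom}(S^t)$. To prove that $T_{\sigma,1}$ is an extension of $S$, I would fix an arbitrary $u\in \mathrm{Dom}(S)$ with $Su=f$ and show that the pair $(u,f)$ satisfies the characterizing relation (\ref{7}), namely $\langle u, T_\sigma^* \psi\rangle = \langle f,\psi\rangle$ for every $\psi\in \mathcal{S}(\mathbb{Z}^n)$. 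Once this is verified, Definition \ref{8} immediately yields $u\in \mathrm{Dom}(T_{\sigma,1})$ and $T_{\sigma,1}u = f = Su$, which is exactly the assertion that $\mathrm{Dom}(S)\subseteq \mathrm{Dom}(T_{\sigma,1})$ with the two operators agreeing on $\mathrm{Dom}(S)$.

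The key step is the identification of $S^t$ with $T_\sigma^t$ on $\mathcal{S}(\mathbb{Z}^n)$. I would argue directly from the definition of the transpose: if $\psi\in \mathrm{Dom}(S^t)$, then $\langle S^t\psi, g\rangle = \langle \psi, Sg\rangle$ for all $g\in \mathrm{Dom}(S)$; restricting $g$ to $\mathcal{S}(\mathbb{Z}^n)$, where $Sg = T_\sigma g$, shows that the very same identity characterizes the transpose of $T_\sigma$, so that $\psi\in \mathrm{Dom}(T_\sigma^t)$ and $S^t\psi = T_\sigma^t\psi$. In other words, $S^t$ is a restriction of $T_\sigma^t=T_{\sigma^t}$, and by the hypothesis $\mathcal{S}(\mathbb{Z}^n)\subseteq \mathrm{Dom}(S^t)$ this equality holds for every $\psi\in\mathcal{S}(\mathbb{Z}^n)$. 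With this in hand, for $u\in \mathrm{Dom}(S)$, $Su=f$, and $\psi\in\mathcal{S}(\mathbb{Z}^n)$ the transpose duality for $S$ gives
$$\langle f,\psi\rangle = \langle Su,\psi\rangle = \langle u, S^t\psi\rangle = \langle u, T_\sigma^t\psi\rangle,$$
which is precisely the relation (\ref{7}) defining $T_{\sigma,1}$. Since $u\in\mathrm{Dom}(S)$ was arbitrary, $T_{\sigma,1}$ extends $S$; combined with the preceding proposition, which guarantees $\mathcal{S}(\mathbb{Z}^n)\subseteq \mathrm{Dom}(T_{\sigma,1}^t)$ so that $T_{\sigma,1}$ itself belongs to the admissible class of closed extensions, this shows $T_{\sigma,1}$ is the largest such closed extension, i.e. the maximal operator.

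The main point requiring care is the consistency of the two pairings, since Definition \ref{8} phrases the characterizing relation through the adjoint $T_\sigma^*$ (the sesquilinear $\ell^2$ inner product) while the hypothesis on $S$ is phrased through the transpose $S^t$ (the bilinear distributional duality). I would reconcile these by fixing one pairing convention throughout and invoking the fact, already visible in the adjoint and transpose theorems proved above, that the symbols $\sigma^*$ and $\sigma^t$ of $T_\sigma^*$ and $T_\sigma^t$ differ only by the complex conjugation built into the $\ell^2$ inner product; since $\mathcal{S}(\mathbb{Z}^n)$ is invariant under complex conjugation, replacing $\psi$ by $\overline{\psi}$ converts the transpose identity $\langle u, T_\sigma^t\psi\rangle = \langle f,\psi\rangle$ into the adjoint form (\ref{7}). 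No genuinely analytic obstacle arises here; the entire content of the proposition is the extension-restricts-transpose principle together with the definition of the maximal operator, and the only real work is this bookkeeping of conjugations.
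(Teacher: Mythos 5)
The statement you are proving is really a naming convention, and the paper offers no proof of it at all: the ``corollary'' and the two propositions immediately preceding it (that $\mathcal{S}(\mathbb{Z}^n)\subseteq \mathrm{Dom}(T_{\sigma,1}^{t})$ and that $T_{\sigma,1}$ is the largest closed extension among those $S$ with $\mathcal{S}(\mathbb{Z}^n)\subseteq \mathrm{Dom}(S^{t})$) are all stated without argument. Your proposal supplies the missing substance, and it does so correctly, by the standard route (the one used in Wong's treatment of minimal and maximal operators): an extension restricts the transpose, so $S^{t}\psi=T_\sigma^{t}\psi$ for $\psi\in\mathcal{S}(\mathbb{Z}^n)$, and then the duality $\langle Su,\psi\rangle=\langle u,S^{t}\psi\rangle$ hands you exactly the characterizing relation of Definition \ref{8}, whence $\mathrm{Dom}(S)\subseteq\mathrm{Dom}(T_{\sigma,1})$ with $T_{\sigma,1}=S$ there. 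The one genuine subtlety --- that the paper's relation (\ref{7}) is phrased through the sesquilinear adjoint $T_\sigma^{*}$ while the maximality hypothesis is phrased through the bilinear transpose $S^{t}$ --- is real, since the paper itself silently conflates the two pairings in its displays around Definition \ref{8}; your reconciliation via $\psi\mapsto\overline{\psi}$ and conjugation-invariance of $\mathcal{S}(\mathbb{Z}^n)$ is the right fix. Two small points to make the write-up complete: you should also record that $T_{\sigma,1}$ itself lies in the admissible class, i.e.\ that it is a \emph{closed extension of $T_\sigma$} (Proposition \ref{23} together with Proposition \ref{13}) in addition to satisfying $\mathcal{S}(\mathbb{Z}^n)\subseteq\mathrm{Dom}(T_{\sigma,1}^{t})$, before calling it the largest member; and the ``minimal'' half of the corollary deserves one sentence, namely that $T_{\sigma,0}$ is by construction the closure of $T_\sigma|_{\mathcal{S}(\mathbb{Z}^n)}$ and hence the smallest closed extension. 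Neither is a gap in the mathematics, only in the bookkeeping.
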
	
	
	\begin{proposition}\label{11}	Let  $\sigma \in M_{\rho, \Lambda}^m(\mathbb{Z}^n\times \mathbb{T}^n)$ be  $M$-elliptic.   Then there exist positive constants $C$ and $D>0$ such that		$$		C\|u\|_{m, 2,  \Lambda} \leq \|T_\sigma u\|_{0,  2,  \Lambda} +\|u\|_{0, 2, \Lambda} \leq D\|u\|_{m,  2,  \Lambda} 		$$		for every $u\in {H}_{  \Lambda}^{m, 2}(\mathbb{Z}^n ).$	\end{proposition}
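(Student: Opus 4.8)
The plan is to prove the two inequalities separately, treating the right-hand (upper) bound as a routine consequence of the continuity of $T_\sigma$ together with the Sobolev embedding, and the left-hand (lower) bound as the substantive estimate that encodes $M$-ellipticity through the parametrix constructed in Theorem \ref{5}. Throughout I would use that $\Lambda(D)^0 = I$, so that the norm $\|\cdot\|_{0,2,\Lambda}$ coincides with the $\ell^2(\mathbb{Z}^n)$-norm.

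For the upper bound, I would apply Corollary \ref{2} with $s = 0$: this shows that $T_\sigma$ maps $\mathcal{H}_{\Lambda}^{m,2}$ boundedly into $\mathcal{H}_{\Lambda}^{0,2} = \ell^2(\mathbb{Z}^n)$, giving $\|T_\sigma u\|_{0,2,\Lambda} \leq C_1\|u\|_{m,2,\Lambda}$. The remaining term is controlled by the Sobolev embedding of Proposition \ref{1}, which yields $\|u\|_{0,2,\Lambda} \leq \|u\|_{m,2,\Lambda}$ (valid for $m \geq 0$). Adding the two estimates produces the constant $D$.

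For the lower bound, I would invoke the parametrix. By $M$-ellipticity and Theorem \ref{5}, there is $\tau \in M_{\rho,\Lambda}^{-m}(\mathbb{Z}^n\times\mathbb{T}^n)$ with $T_\tau T_\sigma = I + R$, where $R$ has symbol in $M_{\rho,\Lambda}^{-\infty}(\mathbb{Z}^n\times\mathbb{T}^n)$. Rearranging gives the key identity $u = T_\tau T_\sigma u - R u$, whence
$$\|u\|_{m,2,\Lambda} \leq \|T_\tau(T_\sigma u)\|_{m,2,\Lambda} + \|R u\|_{m,2,\Lambda}.$$
For the first term, Corollary \ref{2} applied to $T_\tau$ (of order $-m$) with $s = m$ shows that $T_\tau \colon \mathcal{H}_{\Lambda}^{0,2} \to \mathcal{H}_{\Lambda}^{m,2}$ is bounded, so $\|T_\tau(T_\sigma u)\|_{m,2,\Lambda} \leq C_3\|T_\sigma u\|_{0,2,\Lambda}$. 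For the second term, since the symbol of $R$ lies in $M_{\rho,\Lambda}^{-\infty} \subset M_{\rho,\Lambda}^{-m}$, Corollary \ref{2} again gives $R \colon \mathcal{H}_{\Lambda}^{0,2} \to \mathcal{H}_{\Lambda}^{m,2}$ bounded, hence $\|R u\|_{m,2,\Lambda} \leq C_4\|u\|_{0,2,\Lambda}$. Combining and setting $C = (\max\{C_3,C_4\})^{-1}$ yields $C\|u\|_{m,2,\Lambda} \leq \|T_\sigma u\|_{0,2,\Lambda} + \|u\|_{0,2,\Lambda}$.

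The main obstacle is the lower bound, and specifically the fact that it rests entirely on the existence of a genuine left parametrix: the remainder term $Ru$ must be absorbed into the lower-order norm $\|u\|_{0,2,\Lambda}$, which is possible precisely because $R$ is smoothing, and it is $M$-ellipticity that guarantees this via Theorem \ref{5}. The only points requiring care are verifying the mapping indices in the two applications of Corollary \ref{2} and, for the upper bound, the restriction $m \geq 0$ needed to apply the Sobolev embedding.
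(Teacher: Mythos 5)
Your proposal is correct and follows essentially the same route as the paper: the parametrix identity $u = T_\tau T_\sigma u - Ru$ from Theorem \ref{5} combined with the boundedness of Corollary \ref{2} for the lower bound, and Corollary \ref{2} plus the Sobolev embedding of Proposition \ref{1} for the upper bound. Your remark that the embedding $\|u\|_{0,2,\Lambda} \leq \|u\|_{m,2,\Lambda}$ requires $m \geq 0$ is a point the paper passes over silently, so flagging it is a small improvement rather than a divergence.
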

	\begin{proof} 	
		From  Theorem \ref{5},	for every $u\in \mathcal{H}_{\Lambda}^{m, 2}(\mathbb{Z}^n ),$ we have 	$$	u=T_\tau T_\sigma u-R u	$$	 where $\tau \in M_{\rho, \Lambda}^{-m}(\mathbb{Z}^{n} \times \mathbb{T}^{n})$ and  $R$  is a  pseudo differential operators  with symbols in $M_{\rho, \Lambda}^{-\infty}(\mathbb{Z}^n\times \mathbb{T}^n) $. Thus  using  Corollary \ref{2},  we get
		\begin{align*}
			\|u\|_{m, 2,  \Lambda}&= \|T_\tau (T_\sigma u)-R u\|_{m, 2, \Lambda}\\	
			&\leq \|T_\tau (T_\sigma u)\|_{m, 2,  \Lambda} +\|Ru\|_{m, 2,  \Lambda}\\	
			&\leq C_1\|T_\sigma u\|_{0,  2,  \Lambda}+C_2\|u\|_{0,  2,  \Lambda}\\	&\leq \max\{ C_1, C_2\}\left(\|T_\sigma u\|_{0,  2,  \Lambda}+\|u\|_{0,  2,  \Lambda}\right)	\end{align*}	
		for every $u\in \mathcal{H}_{ \Lambda}^{m, 2}(\mathbb{Z}^n).$  This proves the left hand side inequality. 	For the other side,  from   Remark \ref{1} and Corollary \ref{2}, there exists a   constants $D'$ such that 
		\begin{align*}	\|T_\sigma u\|_{0,  2,  \Lambda} +\|u\|_{0,  2,  \Lambda} &\leq D'\|u\|_{m,  2,  \Lambda} +\|u\|_{m,  2,  \Lambda} \\		&=(D'+1) \|u\|_{m,  2,  \Lambda}=D\|u\|_{m,  2,  \Lambda}
		\end{align*}	
		for every $u\in \mathcal{H}_{\Lambda}^{m, 2}(\mathbb{Z}^n).$	This completes the proof of the Proposition. 
	\end{proof}
	\begin{proposition}\label{10} 	$\mathcal{S}(\mathbb{Z}^n)$ is dense  in $\mathcal{H}_{ \Lambda}^{m, 2}(\mathbb{Z}^n)$.
	\end{proposition}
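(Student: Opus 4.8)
The plan is to reduce the statement to the elementary fact that finitely supported functions are dense in a weighted $\ell^2$-space, and then to observe that such functions already lie in $\mathcal{S}(\mathbb{Z}^n)$. The first step is to make the norm on $\mathcal{H}_{\Lambda}^{m,2}(\mathbb{Z}^n)$ explicit. Since the weight $\Lambda$ depends only on $k$, the operator $\Lambda(D)^m$ acts as the pointwise multiplication $w(k)\mapsto\Lambda(k)^m w(k)$, so that
$$\|w\|_{m,2,\Lambda}^2=\sum_{k\in\mathbb{Z}^n}\Lambda(k)^{2m}\,|w(k)|^2,\qquad w\in\mathcal{H}_{\Lambda}^{m,2}(\mathbb{Z}^n).$$
Thus $\mathcal{H}_{\Lambda}^{m,2}(\mathbb{Z}^n)$ is exactly the space of sequences that are square-summable against the weight $\Lambda(\cdot)^{2m}$, and the whole sum is finite precisely because $w$ belongs to the Sobolev space.

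Next, given $w\in\mathcal{H}_{\Lambda}^{m,2}(\mathbb{Z}^n)$ and $\varepsilon>0$, I would approximate by truncation. Writing $B_R=\{k\in\mathbb{Z}^n:|k|\le R\}$ and $w_R=w\,\mathbf{1}_{B_R}$, each $w_R$ is finitely supported, and a finitely supported function trivially satisfies $\sup_k(1+|k|)^N|w_R(k)|<\infty$ for every $N$, hence $w_R\in\mathcal{S}(\mathbb{Z}^n)$. Moreover,
$$\|w-w_R\|_{m,2,\Lambda}^2=\sum_{|k|>R}\Lambda(k)^{2m}\,|w(k)|^2,$$
which is the tail of the convergent series defining $\|w\|_{m,2,\Lambda}^2$ and therefore tends to $0$ as $R\to\infty$. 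Choosing $R$ large enough that this tail lies below $\varepsilon^2$ yields $w_R\in\mathcal{S}(\mathbb{Z}^n)$ with $\|w-w_R\|_{m,2,\Lambda}<\varepsilon$, which establishes the density.

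Alternatively, one can argue through the surjective isometry $\Lambda(D)^m:\mathcal{H}_{\Lambda}^{m,2}(\mathbb{Z}^n)\to\ell^2(\mathbb{Z}^n)$ established earlier: density of $\mathcal{S}(\mathbb{Z}^n)$ in $\mathcal{H}_{\Lambda}^{m,2}(\mathbb{Z}^n)$ is equivalent to density of the image $\Lambda(D)^m\big(\mathcal{S}(\mathbb{Z}^n)\big)$ in $\ell^2(\mathbb{Z}^n)$, and since multiplication by the nowhere-vanishing weight $\Lambda(\cdot)^m$ preserves finite support, this image contains every finitely supported sequence, a dense subset of $\ell^2(\mathbb{Z}^n)$. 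I do not expect a genuine obstacle here; the argument is a routine truncation. The only points requiring care are the identification of $\|\cdot\|_{m,2,\Lambda}$ as a weighted $\ell^2$-norm and the observation that compactly supported lattice functions belong to $\mathcal{S}(\mathbb{Z}^n)$, both of which are immediate from the definitions.
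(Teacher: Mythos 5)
Your proposal is correct. Your main argument (explicit truncation) is a mild variant of the paper's proof, and your ``alternative'' route is in fact exactly what the paper does: the paper takes $g\in\mathcal{H}_{\Lambda}^{m,2}(\mathbb{Z}^n)$, approximates $\Lambda(D)^m g$ in $\ell^2(\mathbb{Z}^n)$ by a sequence $(g_k)$ in $\mathcal{S}(\mathbb{Z}^n)$ (quoting density of $\mathcal{S}(\mathbb{Z}^n)$ in $\ell^2(\mathbb{Z}^n)$ as known), sets $h_k=\Lambda(D)^{-m}g_k\in\mathcal{S}(\mathbb{Z}^n)$, and concludes via the isometry $\|h_k-g\|_{m,2,\Lambda}=\|g_k-\Lambda(D)^m g\|_{\ell^2(\mathbb{Z}^n)}\to 0$. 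What your primary argument buys is that it unwinds the black box: since in this lattice setting the ``Fourier multiplier'' $\Lambda(D)^m$ is the quantization of the $x$-independent symbol $\Lambda(k)^m$ and hence acts as pointwise multiplication by $\Lambda(k)^m$, the Sobolev norm is an explicit weighted $\ell^2$-norm and the approximants can be taken to be the truncations $w\mathbf{1}_{B_R}$, with convergence reduced to the vanishing of tails of a convergent series. The one point where your argument leans on an interpretation is precisely this identification of $\Lambda(D)^m$ with multiplication by $\Lambda(k)^m$; the paper's literal definition $\Lambda(D)^s w=\mathcal{F}_{\mathbb{Z}^n}^{-1}\Lambda^s\mathcal{F}_{\mathbb{Z}^n}w$ is written as if $\Lambda^s$ acted on the frequency side $\mathbb{T}^n$, but your reading is the only one consistent with the rest of the paper (e.g.\ with the claim in Corollary \ref{2} that $\Lambda(D)^{s}T_\sigma\Lambda(D)^{-s-m}$ has symbol in $M^{0}_{\rho,\Lambda}$), and the paper's own proof only needs the isometry property together with the fact that $\Lambda(D)^{\pm m}$ preserves $\mathcal{S}(\mathbb{Z}^n)$, both of which your reading supplies. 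No gap.
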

	\begin{proof}	
		Let $g\in \mathcal{H}_{ \Lambda}^{m, 2}(\mathbb{Z}^n).$ Then from the definition, we have $\Lambda(D)^m g\in \ell^2 (\mathbb{Z}^n).$ Density of  $\mathcal{S}(\mathbb{Z}^n)$  in $ \ell^2(\mathbb{Z}^n)$ implies there exist a sequence $(g_k)_{k\in \mathbb{N}} $ in $ \mathcal{S}(\mathbb{Z}^n)$  such that $ g_k\to\Lambda(D)^m g $ in $\ell^2(\mathbb{Z}^n).$ Let us define $h_k=\Lambda(D)^{-m}g_k.$ Then $h_k\in \mathcal{S}(\mathbb{Z}^n)$ for $k=1, 2, \cdots,$ and
		\begin{align*}	\|h_k-g\|_{m,  2,  \Lambda}&=\|\Lambda(D)^{m}(h_k-g)\|_{\ell^2(\mathbb{Z}^n)}\\	&=\|\Lambda(D)^{m}h_k-\Lambda(D)^{m}g\|_{\ell^2(\mathbb{Z}^n)}\\	&=\|g_k-\Lambda(D)^{m}g\|_{\ell^2(\mathbb{Z}^n)}\to 		\end{align*}
		as $k\to \infty.$ Therefore $\mathcal{S}(\mathbb{Z}^n)$ is dense  in $\mathcal{H}_{ \Lambda}^{m, 2}(\mathbb{Z}^n)$.	
	\end{proof}	
	\begin{theorem}	\label{19}
		Let  $\sigma \in M_{\rho, \Lambda}^m(\mathbb{Z}^n\times \mathbb{T}^n)$ be  $M$-elliptic.    Then $$ \mathcal{H}_{\Lambda}^{m, 2}(\mathbb{Z}^n) = Dom( T_{\sigma, 0}) .$$	
	\end{theorem}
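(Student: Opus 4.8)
The plan is to establish the set equality by proving the two inclusions $\mathcal{H}_{\Lambda}^{m, 2}(\mathbb{Z}^n) \subseteq \mathrm{Dom}(T_{\sigma, 0})$ and $\mathrm{Dom}(T_{\sigma, 0}) \subseteq \mathcal{H}_{\Lambda}^{m, 2}(\mathbb{Z}^n)$ separately. The essential inputs are the two-sided a priori estimate of Proposition \ref{11}, the density of $\mathcal{S}(\mathbb{Z}^n)$ in $\mathcal{H}_{\Lambda}^{m, 2}$ from Proposition \ref{10}, the mapping property of $T_\sigma$ from Corollary \ref{2}, and the completeness of the weighted Sobolev space together with the continuous embedding $\mathcal{H}_{\Lambda}^{m, 2} \hookrightarrow \ell^2(\mathbb{Z}^n)$ coming from Proposition \ref{1}. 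Throughout I will exploit that $\mathrm{Dom}(T_{\sigma, 0})$ consists exactly of those $u \in \ell^2(\mathbb{Z}^n)$ admitting a sequence $(\phi_k) \subset \mathcal{S}(\mathbb{Z}^n)$ with $\phi_k \to u$ and $T_\sigma \phi_k \to T_{\sigma, 0} u$, both in $\ell^2(\mathbb{Z}^n)$.

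For the inclusion $\mathcal{H}_{\Lambda}^{m, 2} \subseteq \mathrm{Dom}(T_{\sigma, 0})$, I would start from $u \in \mathcal{H}_{\Lambda}^{m, 2}$ and invoke Proposition \ref{10} to pick $\phi_k \in \mathcal{S}(\mathbb{Z}^n)$ with $\phi_k \to u$ in $\mathcal{H}_{\Lambda}^{m, 2}$. The embedding $\mathcal{H}_{\Lambda}^{m, 2} \hookrightarrow \ell^2$ then forces $\phi_k \to u$ in $\ell^2$, while Corollary \ref{2} with $s = 0$ gives boundedness of $T_\sigma \colon \mathcal{H}_{\Lambda}^{m, 2} \to \ell^2$ and hence $T_\sigma \phi_k \to T_\sigma u$ in $\ell^2$. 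Setting $f := T_\sigma u$ verifies the defining conditions of the minimal operator, so $u \in \mathrm{Dom}(T_{\sigma, 0})$ and $T_{\sigma, 0} u = T_\sigma u$.

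The reverse inclusion is where the $M$-ellipticity enters decisively. Given $u \in \mathrm{Dom}(T_{\sigma, 0})$, choose $(\phi_k) \subset \mathcal{S}(\mathbb{Z}^n)$ with $\phi_k \to u$ and $T_\sigma \phi_k \to T_{\sigma, 0} u$ in $\ell^2$. Applying the lower bound of Proposition \ref{11} to the differences $\phi_j - \phi_k \in \mathcal{S}(\mathbb{Z}^n) \subseteq \mathcal{H}_{\Lambda}^{m, 2}$ yields
\begin{align*}
C\,\|\phi_j - \phi_k\|_{m, 2, \Lambda} &\leq \|T_\sigma(\phi_j - \phi_k)\|_{0, 2, \Lambda} + \|\phi_j - \phi_k\|_{0, 2, \Lambda}.
\end{align*}
Since $(\phi_k)$ and $(T_\sigma \phi_k)$ are both Cauchy in $\ell^2 = \mathcal{H}_{\Lambda}^{0, 2}$, the right-hand side tends to $0$, so $(\phi_k)$ is Cauchy in the complete space $\mathcal{H}_{\Lambda}^{m, 2}$ and converges there to some $v$. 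The continuous embedding into $\ell^2$ gives $\phi_k \to v$ in $\ell^2$ as well, and uniqueness of $\ell^2$-limits forces $v = u$; hence $u = v \in \mathcal{H}_{\Lambda}^{m, 2}$.

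The main obstacle is the reverse inclusion, and it is powered entirely by the lower a priori bound of Proposition \ref{11}, whose validity rests on the existence of the parametrix $T_\tau$ for the $M$-elliptic operator $T_\sigma$ constructed in Theorem \ref{5}. Once this estimate is in hand the remainder is a soft completeness-and-uniqueness argument; the only point requiring care is that the two limits (in $\mathcal{H}_{\Lambda}^{m, 2}$ and in $\ell^2$) genuinely coincide, which is exactly what the continuity of the embedding $\mathcal{H}_{\Lambda}^{m, 2} \hookrightarrow \ell^2$ from Proposition \ref{1} guarantees.
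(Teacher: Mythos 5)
Your proposal is correct and follows essentially the same route as the paper: density of $\mathcal{S}(\mathbb{Z}^n)$ in $\mathcal{H}_{\Lambda}^{m,2}$ plus the boundedness $T_\sigma\colon \mathcal{H}_{\Lambda}^{m,2}\to\ell^2$ for one inclusion, and the lower a priori estimate of Proposition \ref{11} applied to differences, together with completeness of $\mathcal{H}_{\Lambda}^{m,2}$ and uniqueness of $\ell^2$-limits, for the reverse inclusion. The only cosmetic difference is that the paper cites Proposition \ref{11} also for the Cauchyness of $(T_\sigma\phi_k)$ in the first inclusion, where you invoke Corollary \ref{2} directly, which is equivalent.
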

	\begin{proof}	Let $u \in \mathcal{H}_{ \Lambda}^{m, 2}(\mathbb{Z}^n).$  Since $\mathcal{S}(\mathbb{Z}^n)$ is dense  in $\mathcal{H}_{ \Lambda}^{m, 2}(\mathbb{Z}^n)$, there exists a sequence $\left(u_{k}\right)_{k \in \mathbb{N}}$ in $\mathcal{S}(\mathbb{Z}^n)$ such that $u_{k} \rightarrow u$ in $\mathcal{H}_{ \Lambda }^{m, 2}(\mathbb{Z}^n)$  and therefore in $\ell^{2}(\mathbb{Z}^n)$		as $k \rightarrow \infty$. Using Proposition \ref{11}, the sequences $\left(u_{k}\right)_{k \in \mathbb{N}}$ and $\left(T_{\sigma} u_{k}\right)_{k \in \mathbb{N}}$ are Cauchy sequences in $\ell^{2}(\mathbb{Z}^n)$.  	Hence $u_{k} \rightarrow u$	and $T_{\sigma} u_{k} \rightarrow f$ for some $f \in \ell^{2}(\mathbb{Z}^n)$ as $k \rightarrow \infty$. By the definition of $T_{\sigma, 0}$, this implies that $u \in {Dom}\left(T_{\sigma, 0}\right)$ and $T_{\sigma, 0} g=f$. Therefore 
		$\mathcal{H}_{\Lambda}^{m, 2}(\mathbb{Z}^n)\subseteq {Dom}\left(T_{\sigma, 0}\right).	$
		
		Again, let us assume that $u\in  {Dom}(T_{\sigma, 0}).$ Then  by the definition of  ${Dom}\left(T_{\sigma, 0}\right)$, there exists a sequence $( \phi_{j} )_{j\in \mathbb{N}}$ in		\(\mathcal{S}(\mathbb{Z}^n)\) such that \(\phi_{j} \rightarrow u\) and \(T_{\sigma} \phi_{j} \rightarrow f\) in \(\ell^{2}\left(\mathbb{Z}^{n}\right)\) as \(j \rightarrow \infty \) for some $f\in \ell^2 (\mathbb{Z}^n)$.  Then by Proposition \ref{11},  $\left(\phi_{k}\right)_{k \in \mathbb{N}}$ is a Cauchy sequences in $\mathcal{H}_{\Lambda}^{m, 2}(\mathbb{Z}^n)$. Since $\mathcal{H}_{\Lambda}^{m, 2}(\mathbb{Z}^n)$ is complete, there exist  $g\in \mathcal{H}_{\Lambda}^{m, 2}(\mathbb{Z}^n)$ such that $\phi_{k}\to g$ in $\mathcal{H}_{\Lambda}^{m, 2}(\mathbb{Z}^n)$ as  $k\to \infty.$  This implies   $\phi_{k}\to g$ in $\ell^2(\mathbb{Z}^n)$  and hence  $f=g\in \mathcal{H}_{\Lambda}^{m, 2}(\mathbb{Z}^n)$. Therefore ${Dom}\left(T_{\sigma, 0}\right) \subseteq  	\mathcal{H}_{\Lambda}^{m, 2}(\mathbb{Z}^n)	$.

	\end{proof}

	\begin{theorem}
		Let   $\sigma \in M_{\rho, \Lambda}^{m}(\mathbb{Z}^n  \times \mathbb{T}^n )$  be  $M$-elliptic.  Then 	$T_{\sigma, 0} =T_{\sigma, 1} $.
	\end{theorem}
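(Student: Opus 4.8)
The plan is to establish $T_{\sigma,0}=T_{\sigma,1}$ by proving the two domain inclusions together with the agreement of the two operators on their common domain. One direction is already available: Proposition \ref{13} shows that $T_{\sigma,1}$ is an extension of $T_{\sigma,0}$, which gives both $\mathrm{Dom}(T_{\sigma,0})\subseteq \mathrm{Dom}(T_{\sigma,1})$ and the coincidence of the two operators wherever both are defined. Hence the entire problem reduces to the reverse inclusion $\mathrm{Dom}(T_{\sigma,1})\subseteq \mathrm{Dom}(T_{\sigma,0})$. By Theorem \ref{19}, the $M$-ellipticity of $\sigma$ identifies $\mathrm{Dom}(T_{\sigma,0})=\mathcal{H}_{\Lambda}^{m,2}(\mathbb{Z}^n)$, so it suffices to show that every $u\in\mathrm{Dom}(T_{\sigma,1})$ already lies in the weighted Sobolev space $\mathcal{H}_{\Lambda}^{m,2}(\mathbb{Z}^n)$.

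First I would fix $u\in\mathrm{Dom}(T_{\sigma,1})$ and set $f:=T_{\sigma,1}u\in\ell^2(\mathbb{Z}^n)$. By Proposition \ref{23}, the identity $T_\sigma u=f$ holds in the distributional sense, so $u$ is a tempered distribution whose image under $T_\sigma$ is the genuine $\ell^2$-function $f$. The decisive tool is the parametrix from Theorem \ref{5}: there is a symbol $\tau\in M_{\rho,\Lambda}^{-m}(\mathbb{Z}^n\times\mathbb{T}^n)$ and a smoothing operator $R$, whose symbol lies in $M_{\rho,\Lambda}^{-\infty}(\mathbb{Z}^n\times\mathbb{T}^n)$, with $T_\tau T_\sigma=I+R$. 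Reading this operator identity on $\mathcal{S}'(\mathbb{Z}^n)$ and evaluating at $u$ yields $u=T_\tau T_\sigma u-Ru=T_\tau f-Ru$.

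It then remains to place the two terms on the right. Since $\tau\in M_{\rho,\Lambda}^{-m}(\mathbb{Z}^n\times\mathbb{T}^n)$, Corollary \ref{2} (taken with $s=m$) shows that $T_\tau$ maps $\ell^2(\mathbb{Z}^n)=\mathcal{H}_{\Lambda}^{0,2}$ boundedly into $\mathcal{H}_{\Lambda}^{m,2}$, whence $T_\tau f\in\mathcal{H}_{\Lambda}^{m,2}$. Likewise, the symbol of $R$ belongs to $M_{\rho,\Lambda}^{-\infty}(\mathbb{Z}^n\times\mathbb{T}^n)\subseteq M_{\rho,\Lambda}^{-m}(\mathbb{Z}^n\times\mathbb{T}^n)$, so a second application of Corollary \ref{2} (again with $s=m$) sends $u\in\ell^2(\mathbb{Z}^n)$ into $\mathcal{H}_{\Lambda}^{m,2}$ (indeed into every weighted Sobolev space). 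Consequently $u=T_\tau f-Ru\in\mathcal{H}_{\Lambda}^{m,2}=\mathrm{Dom}(T_{\sigma,0})$, which is exactly the missing inclusion; combined with Proposition \ref{13} it forces $T_{\sigma,0}=T_{\sigma,1}$.

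The step I expect to be the main obstacle is the rigorous justification of the identity $u=T_\tau f-Ru$ when $u$ is merely a distribution and not a Schwartz function. One must first extend the symbolic-calculus composition $T_\tau T_\sigma=I+R$ of Theorem \ref{5}, initially an equality of operators on $\mathcal{S}(\mathbb{Z}^n)$, to an equality on $\mathcal{S}'(\mathbb{Z}^n)$ by duality and continuity, and then verify the associativity $(T_\tau T_\sigma)u=T_\tau(T_\sigma u)$ in the case $T_\sigma u=f\in\ell^2(\mathbb{Z}^n)$, so that $T_\tau$ may legitimately act on the $\ell^2$-datum $f$ rather than only on $\mathcal{S}(\mathbb{Z}^n)$. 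Once this duality and continuity step is secured, the mapping properties supplied by Corollary \ref{2} close the argument in a routine manner.
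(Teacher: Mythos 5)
Your proposal is correct and follows essentially the same route as the paper: reduce via Theorem \ref{19} to showing $\mathrm{Dom}(T_{\sigma,1})\subseteq \mathcal{H}_{\Lambda}^{m,2}(\mathbb{Z}^n)$, then apply the parametrix identity $u=T_\tau T_\sigma u-Ru$ together with Proposition \ref{23} (to read $T_\sigma u=T_{\sigma,1}u\in\ell^2$ distributionally) and the mapping properties of Corollary \ref{2}. Your closing remark about justifying the parametrix identity on $\mathcal{S}'(\mathbb{Z}^n)$ flags a point the paper passes over silently, but it does not change the argument.
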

	\begin{proof}
		Since $T_{\sigma, 1}$ is an closed extension of $T_{\sigma, 0} $,  by Theorem \ref{19}, it is enough to show that ${Dom}\left(T_{\sigma, 1}\right) \subseteq \mathcal{H}_{\Lambda}^{m}(\mathbb{Z}^n)$.  Let $u\in {Dom}\left(T_{\sigma, 1}\right) .$ Since $\sigma$ is an $M$-elliptic, by Theorem \ref{5}, there exists $\tau \in M_{\rho, \Lambda}^{  -m}(\mathbb{Z}^n  \times \mathbb{T}^n )$	such that	
		\begin{align}\label{113}		u=T_{\tau} T_{\sigma} u-Ru		\end{align}		
		for every $u\in {M}_{\Lambda}^{m, 2}(\mathbb{Z}^n ),$ where $R$  is a  pseudo differential operators  with symbols in $M_{\rho, \Lambda}^{-\infty}(\mathbb{Z}^n\times \mathbb{T}^n)$.  From Proposition \ref{23} we know that  $T_{\sigma} u=T_{\sigma, 1} u $ in the sense of  distribution. Thus   it follows from 	Corollary  \ref{2} that $T_{\tau} T_{\sigma} u\in \mathcal{H}_{ \Lambda}^{m, 2}(\mathbb{Z}^n)$.  Since $g\in \ell^2(\mathbb{Z}^n)$ and $R$ has symbol in $M_{\rho, \Lambda}^{-\infty} (\mathbb{Z}^n  \times \mathbb{T}^n )\subset M_{\rho,  \Lambda}^{-m}(\mathbb{Z}^n  \times \mathbb{T}^n )$, again using Corollary  \ref{2}, we can conclude that $Ru \in  \mathcal{H}_{ \Lambda}^{m, 2}(\mathbb{Z}^n)$.  Using  the relation (\ref{113}),  we have $u\in  \mathcal{H}_{ \Lambda}^{m, 2}(\mathbb{Z}^n).$ 	Therefore 	
		$	Dom( T_{\sigma, 1}) \subseteq \mathcal{H}_{\Lambda}^{m}(\mathbb{Z}^n)$ and completes the proof.
		
	\end{proof}

	\section{Fredholmness and ellipticity}\label{sec6}
	This section is devoted to   study the Fredholmness and ellipticity of a pseudo-differential operators. We show that   a pseudo-differential operator of order 0 is $M$-elliptic if and only if it is a Fredholm operator. Finally we end this paper by    calculating  the index of such  pseudo-differential operators. We begin with the definition of Fredholm operators.
	\begin{definition}
		Let  $X$ and  $Y$ be two complex Banach spaces. Then a closed linear operator $T: X\to Y$   with dense domain $\mathcal{D}(T)$ is said to be Fredholm if
		\begin{enumerate}
			\item the range
			$R(T)$ of $T$ is a closed subspace of $Y$,
			\item the null space $N(T)$ of $T$   and the null space $N (T^{t} )$
			of the true adjoint $T^{t}$ of $T$ are finite dimensional.
		\end{enumerate} 
		For a Fredholm operator $T$, the index
		$i(T)$ of $T$ is defined by
		$$
		i(T)={\dim} ~N(T)-{\dim} ~N (T^{t} ).
		$$
	\end{definition}
	Another  criterion for Fredholm of a closed linear operator  is  given in the following thoreom due  to  Atkinson \cite{at}.
	\begin{theorem}\label{eq5000}
		Let $T$ be a closed linear operator from a complex Banach space $X$ into a
		complex Banach space $Y$ with dense domain $\mathcal{D}(T) .$ Then $T$ is Fredholm if and only if we can find a bounded linear operator $B: Y \rightarrow X$ and  compact operators $K_{1}: X \rightarrow X, K_{2}: Y \rightarrow Y$ such that $B T=I+K_{1}$ on $\mathcal{D}(T)$ and $T B=I+K_{2}$ on $Y$.
	\end{theorem}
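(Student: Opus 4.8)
The plan is to prove Atkinson's theorem as a characterization of Fredholmness by invertibility modulo compact operators, treating the two implications separately. The essential ingredients will be the Riesz--Schauder theory for operators of the form $I+K$ with $K$ compact (so that $I+K$ is itself Fredholm, with closed range, finite-dimensional kernel, and finite-codimensional range), the facts that a finite-dimensional subspace always admits a closed topological complement and that a closed subspace of finite codimension admits a finite-dimensional complement, the closed graph theorem for closed operators, and the duality identity $N(T^t)=R(T)^{\perp}$, valid whenever $R(T)$ is closed, which gives $\dim N(T^t)=\operatorname{codim}R(T)$.

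For the sufficiency direction, I would assume $B$, $K_1$, $K_2$ are given with $BT=I+K_1$ on $\mathcal{D}(T)$ and $TB=I+K_2$ on $Y$. If $x\in N(T)$ then $BTx=0$, so $(I+K_1)x=0$; thus $N(T)\subseteq N(I+K_1)$, and since $I+K_1$ is Fredholm by Riesz--Schauder theory, $N(T)$ is finite-dimensional. For the range, $R(T)\supseteq R(TB)=R(I+K_2)$, and $R(I+K_2)$ is closed of finite codimension; since any linear subspace containing a closed subspace of finite codimension is itself closed and of finite codimension (pass to the finite-dimensional quotient $X/M$ for the contained closed subspace $M$), it follows that $R(T)$ is closed with $\operatorname{codim}R(T)<\infty$. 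Finally, $R(T)$ being closed yields $N(T^t)=R(T)^{\perp}\cong (Y/R(T))^{*}$, so $\dim N(T^t)=\operatorname{codim}R(T)<\infty$, and $T$ is Fredholm.

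For the necessity direction, I would assume $T$ is Fredholm and build a parametrix explicitly. Write $X=N(T)\oplus X_1$ with $X_1$ closed (possible since $N(T)$ is finite-dimensional) and $Y=R(T)\oplus Y_1$ with $Y_1$ finite-dimensional (possible since $R(T)$ is closed of finite codimension). The restriction $\widetilde{T}$ of $T$ to $\mathcal{D}(T)\cap X_1$ is a closed bijection onto the Banach space $R(T)$ (injectivity from $N(T)\cap X_1=\{0\}$, surjectivity from $N(T)\subseteq\mathcal{D}(T)$), so by the closed graph theorem $\widetilde{T}^{-1}\colon R(T)\to X$ is bounded. Letting $P\colon Y\to R(T)$ be the bounded projection along $Y_1$, I set $B=\widetilde{T}^{-1}P$, which is bounded. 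Decomposing $x\in\mathcal{D}(T)$ as $x=x_0+x_1$ with $x_0\in N(T)$ and $x_1\in\mathcal{D}(T)\cap X_1$ gives $BTx=\widetilde{T}^{-1}Tx_1=x_1=x-x_0$, so $BT=I-Q$ on $\mathcal{D}(T)$, where $Q$ is the finite-rank projection onto $N(T)$. Similarly $TBy=T\widetilde{T}^{-1}Py=Py$ for $y\in Y$, so $TB=P=I-(I-P)$, where $I-P$ is the finite-rank projection onto $Y_1$. Thus $K_1=-Q$ and $K_2=-(I-P)$ are compact (indeed finite rank), as required.

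The main obstacle I anticipate is the boundedness of the parametrix $B$ in the necessity direction: it rests on applying the closed graph (equivalently open mapping) theorem to the closed operator $\widetilde{T}$, which requires knowing that $R(T)$ is a Banach space, that is, that the range is closed, and on the availability of the bounded projections $P$ and $Q$ arising from the topological complementability of the finite-dimensional kernel and the finite-codimensional closed range. Once these structural splittings are in place, the algebra of the identities $BT=I-Q$ and $TB=I-(I-P)$ is routine.
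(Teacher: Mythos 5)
Your proposal is correct and complete. Note, however, that the paper does not prove this statement at all: it is quoted as a known result of Atkinson with a citation to his 1951 paper, so there is no in-paper argument to compare against. What you have written is the standard textbook proof of Atkinson's theorem for closed densely defined operators, and both directions check out: in the sufficiency direction the inclusions $N(T)\subseteq N(I+K_1)$ and $R(T)\supseteq R(I+K_2)$ together with Riesz--Schauder theory and the identity $N(T^t)=R(T)^{\perp}$ give exactly the three conditions in the paper's definition of Fredholmness; in the necessity direction the splittings $X=N(T)\oplus X_1$ and $Y=R(T)\oplus Y_1$, the closed graph theorem applied to the closed bijection $\widetilde{T}\colon \mathcal{D}(T)\cap X_1\to R(T)$, and the finite-rank projections $Q$ and $I-P$ produce the required bounded parametrix $B$ with $K_1=-Q$ and $K_2=-(I-P)$. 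The only points worth being explicit about, and which your argument implicitly handles correctly, are that $\widetilde{T}$ remains closed when restricted to the closed subspace $X_1$ (so the closed graph theorem applies on the Banach space $R(T)$), and that $By\in\mathcal{D}(T)$ for all $y\in Y$ so that the identity $TB=I+K_2$ makes sense on all of $Y$.
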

	
	The main result of this section is the following theorem.
	\begin{theorem}
		Let $\sigma \in M_{\rho, \Lambda }^{0}\left(\mathbb{Z}^{n} \times \mathbb{T}^{n}\right) .$ Then $T_{\sigma}: \ell^{2}\left(\mathbb{Z}^{n}\right) \rightarrow \ell^{2}\left(\mathbb{Z}^{n}\right)$ is Fredholm if and only
		if $T_{\sigma}: \ell^{2}\left(\mathbb{Z}^{n}\right) \rightarrow \ell^{2}\left(Z^{n}\right)$ is $M$-elliptic.
	\end{theorem}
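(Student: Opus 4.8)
The plan is to prove the two implications separately, using Atkinson's characterization (Theorem \ref{eq5000}) as the common bridge between Fredholmness and the existence of a parametrix modulo compact operators.

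For the implication $M$-elliptic $\Rightarrow$ Fredholm, I would invoke the parametrix construction of Theorem \ref{5}. Since $\sigma\in M_{\rho,\Lambda}^{0}$ is $M$-elliptic, there is $\tau\in M_{\rho,\Lambda}^{0}(\mathbb{Z}^n\times\mathbb{T}^n)$ with $T_\tau T_\sigma=I+R$ and $T_\sigma T_\tau=I+S$, where $R$ and $S$ have symbols in $M_{\rho,\Lambda}^{-\infty}(\mathbb{Z}^n\times\mathbb{T}^n)$. The parametrix $T_\tau$ is bounded on $\ell^2(\mathbb{Z}^n)$ by Theorem \ref{eq20}. The remainders are compact: a symbol in $M_{\rho,\Lambda}^{-\infty}$ lies in $M_{\rho,\Lambda}^{-N}$ for every $N$, so by Corollary \ref{2} the operator $R$ maps $\ell^2=\mathcal{H}_\Lambda^{0,2}$ boundedly into $\mathcal{H}_\Lambda^{N,2}$; composing with the compact embedding $\mathcal{H}_\Lambda^{N,2}\hookrightarrow\ell^2$ of Theorem \ref{eq17} (valid for $N>0$) shows $R:\ell^2\to\ell^2$ is compact, and likewise $S$. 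Taking $B=T_\tau$, $K_1=R$, $K_2=S$ in Theorem \ref{eq5000} gives that $T_\sigma$ is Fredholm.

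For the converse I would argue by contraposition and construct a singular (Weyl) sequence. Suppose $\sigma$ is not $M$-elliptic; then there are $x_j\in\mathbb{T}^n$ and $k_j\in\mathbb{Z}^n$ with $|k_j|\to\infty$ and $\sigma(k_j,x_j)\to0$. Fix a Schwartz bump $\phi$ on $\mathbb{R}^n$ with $\|\phi\|_{L^2}=1$ and set the wave packets
\begin{equation*}
f_j(k)=c_j\,e^{2\pi i k\cdot x_j}\,\phi\!\left(\frac{k-k_j}{L_j}\right),\qquad c_j\sim L_j^{-n/2},
\end{equation*}
normalised so that $\|f_j\|_{\ell^2}=1$, with an intermediate width $1\ll L_j\ll|k_j|$ obeying $L_j\,\Lambda(k_j)^{-\rho}\to0$ (for instance $L_j=\Lambda(k_j)^{\rho/2}$, admissible since $\rho\le 1/\mu\le 1/\mu_1$ forces $\mu_1\rho\le1$, so $L_j\to\infty$ yet $L_j=o(|k_j|)$). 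Because $|k_j|\to\infty$ and $L_j=o(|k_j|)$, each $f_j(k)\to0$ pointwise while $\|f_j\|_{\ell^2}=1$, so $f_j\rightharpoonup0$ weakly in $\ell^2$. The transform $\widehat{f_j}$ concentrates near $x_j$ at scale $1/L_j$, so freezing the $x$-variable and using the order-zero bounds $|D_x\sigma|\le C$ and $|\Delta_k\sigma(k,x)|\le C\Lambda(k)^{-\rho}$ gives
\begin{equation*}
\big\|T_\sigma f_j-\sigma(\cdot,x_j)\,f_j\big\|_{\ell^2}\to0,
\end{equation*}
while on the support of $f_j$ a telescoping estimate yields $|\sigma(k,x_j)|\le|\sigma(k_j,x_j)|+C\,L_j\,\Lambda(k_j)^{-\rho}\to0$ uniformly. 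Hence $\|T_\sigma f_j\|_{\ell^2}\to0$. If $T_\sigma$ were Fredholm, Theorem \ref{eq5000} would provide a bounded $B$ and compact $K_1$ with $BT_\sigma=I+K_1$; applying this to $f_j$ and using $\|T_\sigma f_j\|\to0$ together with $\|K_1f_j\|\to0$ (compactness against the weakly null $f_j$) forces $\|f_j\|\to0$, contradicting $\|f_j\|=1$. Thus $T_\sigma$ is not Fredholm, which is the desired contrapositive.

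The main obstacle is this second implication, specifically the two approximation steps for the singular sequence: calibrating the packet width $L_j$ so that it diverges yet stays below $|k_j|$ while killing the symbol increment $L_j\Lambda(k_j)^{-\rho}$, and controlling the symbol-freezing error $\|T_\sigma f_j-\sigma(\cdot,x_j)f_j\|_{\ell^2}$ uniformly in $j$. A cleaner but equivalent route would transfer the problem to the torus via Theorem \ref{l^2}, writing $T_\sigma=\mathcal{F}_{\mathbb{Z}^n}T_\tau^*\mathcal{F}_{\mathbb{Z}^n}^{-1}$ with $\tau(x,k)=\overline{\sigma(-k,x)}$ and reducing the $M$-ellipticity of $\sigma$ to the classical ellipticity of the toroidal symbol $\tau$, where the corresponding singular-sequence argument is standard and already available in \cite{kal}.
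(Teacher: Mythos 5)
Your first implication ($M$-elliptic $\Rightarrow$ Fredholm) is exactly the paper's argument: the parametrix of Theorem \ref{5}, compactness of the smoothing remainders via Corollary \ref{2} composed with the compact embedding of Theorem \ref{eq17}, and then Atkinson's criterion (Theorem \ref{eq5000}). The converse is where you genuinely diverge. The paper builds no singular sequence: it conjugates by the discrete Fourier transform using Theorem \ref{l^2}, notes that the resulting toroidal operator with symbol $\tau(x,k)=\overline{\sigma(-k,x)}$ is Fredholm on $L^2(\mathbb{T}^n)$ with $\tau\in M^{0}_{\rho,\Lambda}(\mathbb{T}^n\times\mathbb{Z}^n)$, and cites Theorem 3.9 of \cite{shala} to conclude $M$-ellipticity of $\tau$, hence of $\sigma$ --- precisely the ``cleaner route'' you mention in your closing sentence but do not carry out. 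Your direct Weyl-sequence construction is self-contained and avoids leaning on an external toroidal result (whose applicability in the weighted $M_{\rho,\Lambda}$ setting would itself need checking), which is a genuine advantage; the cost is the two approximation steps you honestly flag. One calibration detail needs repair: in the telescoping estimate the bound $|\Delta_k\sigma|\lesssim\Lambda(\cdot)^{-\rho}$ is evaluated at intermediate points $k'$ with $|k'-k_j|\lesssim L_j$, and converting $\Lambda(k')^{-\rho}$ into a negative power of $\Lambda(k_j)$ costs the ratio of the growth exponents $\mu_0,\mu_1$ from \eqref{growth}; with your choice $L_j=\Lambda(k_j)^{\rho/2}$ the product $L_j\sup_{k'}\Lambda(k')^{-\rho}$ only tends to zero if $\mu_1<2\mu_0$, whereas taking $L_j=(1+|k_j|)^{\varepsilon}$ with $0<\varepsilon<\mu_0\rho$ makes the calibration unconditional while preserving $L_j\to\infty$ and $L_j=o(|k_j|)$. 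With that adjustment (and the routine but nontrivial $\ell^2$ estimate of the symbol-freezing error) your argument goes through and yields a proof of the hard direction entirely internal to the lattice, at the price of substantially more work than the paper's two-line reduction.
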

	\begin{proof}
		Suppose that $T_{\sigma}: \ell^{2}\left(\mathbb{Z}^{n}\right) \rightarrow \ell^{2}\left(\mathbb{Z}^{n}\right)$ is a Fredholm operator. Then $T_{\tau}=\mathcal{F}_{\mathbb{Z}^{n}} T_{\sigma}^{t} \mathcal{F}_{\mathbb{Z}^{n}}^{-1}$
		is also a Fredholm operator from $L^{2}\left(\mathbb{T}^{n}\right)$ into $L^{2}\left(\mathbb{T}^{n}\right)$, where
		$$
		\tau(x, k)=\overline{\sigma(-k, x)}, \quad x \in \mathbb{T}^{n}, k \in \mathbb{Z}^{n}
		$$
		Since $\sigma \in M_{\rho, \Lambda }^{0}\left(\mathbb{Z}^{n} \times \mathbb{T}^{n}\right) $, then    $\tau \in M_{\rho, \Lambda }^{0}\left(\mathbb{T}^{n} \times \mathbb{Z}^{n}\right)$. Then  by  Theorem 3.9 of \cite{shala},  $\tau$ is $M$-elliptic which implies  that $\sigma$ is $M$-elliptic. 
		
		Conversely, suppose that $\sigma$ is $M$-elliptic. Then by Theorem \ref{5}, there exists a symbol $\tau \in  M_{\rho, \Lambda }^{0}\left(\mathbb{Z}^{n} \times \mathbb{T}^{n}\right) $ such that
		$$
		T_{\sigma} T_{\tau}=I+R
		$$
		and
		$$
		T_{\tau} T_{\sigma}=I+S,
		$$
		where $R$ and $S$ are pseudo-differential operators with symbol in $\bigcap_{m \in \mathbb{R}}  M_{\rho, \Lambda }^{m}\left(\mathbb{Z}^{n} \times \mathbb{T}^{n}\right) .$ The pseudo-differential operator $R: \ell^{2}\left(\mathbb{Z}^{n}\right) \rightarrow \ell^{2}\left(\mathbb{Z}^{n}\right)$ is the same as
		the composition of the psudo-differential operator $R: \ell^{2}\left(\mathbb{Z}^{n}\right) \rightarrow 	\mathcal{H}_{ \Lambda}^{t, 2}(\mathbb{Z}^n)$ and the inclusion
		$i: 	\mathcal{H}_{ \Lambda}^{t, 2}(\mathbb{Z}^n)\rightarrow \ell^{2}\left(\mathbb{Z}^{n}\right)$. Since $R: \ell^{2}\left(\mathbb{Z}^{n}\right) \rightarrow 	\mathcal{H}_{ \Lambda}^{t, 2}(\mathbb{Z}^n)$ is a bounded linear operator and from
		Theorem \ref{eq17},  $i: 	\mathcal{H}_{ \Lambda}^{t, 2}(\mathbb{Z}^n) \rightarrow \ell^{2}\left(\mathbb{Z}^{n}\right)$ is compact, it follows that $R: \ell^{2}\left(\mathbb{Z}^{n}\right) \rightarrow \ell^{2}\left(\mathbb{Z}^{n}\right)$ is
		compact.  Similarly, we have  the pseudo-differential operator  $S: \ell^{2}\left(\mathbb{Z}^{n}\right) \rightarrow \ell^{2}\left(\mathbb{Z}^{n}\right)$ is compact. Hence by Theorem \ref{eq5000}, the operator $T_{\sigma}$ is
		Fredholm.
	\end{proof}
	Next we find   an index formula for the Fredholm pseudo-differential operators on $\mathbb{Z}^{n}$. From the Definition \ref{definition}, the relation (\ref{eq8}) and Lemma 4.1 of \cite{vish}, we have the following result.
	\begin{lemma}\label{eq11}
		Let $\mathcal{S}\left(\mathbb{Z}^{n} \times \mathbb{T}^{n}\right)$ be the Schwartz space on $\mathbb{Z}^{n} \times \mathbb{T}^{n}$. Then
		$$\bigcap_{m \in \mathbb{R}}  M_{\rho, \Lambda }^{m}\left(\mathbb{Z}^{n} \times \mathbb{T}^{n}\right)
		=\bigcap_{m \in \mathbb{R}}  S_{\rho, \Lambda }^{m}\left(\mathbb{Z}^{n} \times \mathbb{T}^{n}\right)=\mathcal{S}\left(\mathbb{Z}^{n} \times \mathbb{T}^{n}\right).
		$$
	\end{lemma}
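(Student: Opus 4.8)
The plan is to treat the two asserted equalities separately. The first one, $\bigcap_{m} M_{\rho,\Lambda}^{m}(\mathbb{Z}^{n}\times\mathbb{T}^{n}) = \bigcap_{m} S_{\rho,\Lambda}^{m}(\mathbb{Z}^{n}\times\mathbb{T}^{n})$, I would not reprove: it is precisely relation (\ref{eq8}), already established in Lemma \ref{new2} as a consequence of the two-sided inclusion (\ref{eq7}). Hence everything reduces to identifying the common value $\bigcap_{m} S_{\rho,\Lambda}^{m}(\mathbb{Z}^{n}\times\mathbb{T}^{n})$ with the Schwartz space $\mathcal{S}(\mathbb{Z}^{n}\times\mathbb{T}^{n})$, and this is where the polynomial growth of the weight in Definition \ref{definition} does the work. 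The cleanest route is to show that the $\Lambda$-scale and the $(1+|k|)$-scale yield the same intersection, i.e.\ $\bigcap_{m} S_{\rho,\Lambda}^{m} = \bigcap_{m} S_{\rho}^{m}$ (recall that $S_{\rho,\Lambda}^{m}$ reduces to the H\"ormander class $S_{\rho}^{m}$ of Definition \ref{eq10} when $\Lambda=\Lambda_1$), after which Lemma 4.1 of \cite{vish} identifies $\bigcap_{m} S_{\rho}^{m}$ with $\mathcal{S}(\mathbb{Z}^{n}\times\mathbb{T}^{n})$.

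For the scale comparison, take $\sigma$ in $\bigcap_{m} S_{\rho,\Lambda}^{m}$ and fix multi-indices $\alpha,\beta$ together with an integer $N$. Membership gives $|D_{x}^{(\beta)}\Delta_{k}^{\alpha}\sigma(k,x)|\le C_{\alpha,\beta,m}\Lambda(k)^{m-\rho|\alpha|}$ for every $m$. Using the lower bound $\Lambda(k)\ge C_{0}(1+|k|)^{\mu_{0}}$ from (\ref{growth}) and choosing $m$ so negative that both $m-\rho|\alpha|<0$ and $\mu_{0}(m-\rho|\alpha|)\le -N$ hold, the negative power of $\Lambda$ is dominated by $(1+|k|)^{\mu_{0}(m-\rho|\alpha|)}\le (1+|k|)^{-N}$. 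Since $\beta$ and $N$ are arbitrary, this is exactly the defining rapid decay of $\mathcal{S}(\mathbb{Z}^{n}\times\mathbb{T}^{n})$, giving the inclusion into the Schwartz (equivalently into $\bigcap_{m} S_{\rho}^{m}$).

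For the reverse inclusion I would take a symbol with Schwartz decay and fix $m,\alpha,\beta$, splitting on the sign of $s:=m-\rho|\alpha|$. When $s\ge 0$ one has $\Lambda(k)^{s}\ge C_{0}^{s}$ by the lower bound, and boundedness of the derivatives already gives the estimate $|D_{x}^{(\beta)}\Delta_{k}^{\alpha}\sigma|\le C\,\Lambda(k)^{s}$. When $s<0$, the upper bound $\Lambda(k)\le C_{1}(1+|k|)^{\mu_{1}}$ yields $\Lambda(k)^{s}\ge C_{1}^{s}(1+|k|)^{\mu_{1}s}$, so it suffices to absorb $|D_{x}^{(\beta)}\Delta_{k}^{\alpha}\sigma|$ into $(1+|k|)^{\mu_{1}s}$; the Schwartz bound $|D_{x}^{(\beta)}\Delta_{k}^{\alpha}\sigma|\le C_{N}(1+|k|)^{-N}$ accomplishes this as soon as $N\ge -\mu_{1}s$. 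This shows $\sigma\in S_{\rho,\Lambda}^{m}$ for every $m$, completing the identity $\bigcap_{m} S_{\rho,\Lambda}^{m}=\bigcap_{m} S_{\rho}^{m}=\mathcal{S}(\mathbb{Z}^{n}\times\mathbb{T}^{n})$.

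The only point requiring care --- and the closest thing to an obstacle --- is keeping track of the sign of the exponent $m-\rho|\alpha|$ when passing between the $\Lambda$-scale and the $(1+|k|)$-scale, since the direction of the inequality relating $\Lambda(k)^{s}$ to $(1+|k|)^{\mu_{i}s}$ flips with the sign of $s$, forcing the use of the lower bound in (\ref{growth}) in one regime and the upper bound in the other. Because both weight exponents $\mu_{0},\mu_{1}$ are strictly positive, each comparison is elementary and no genuine analytic difficulty arises; the substance of the statement is entirely carried by the two-sided polynomial control of $\Lambda$ together with the cited characterization in \cite{vish}.
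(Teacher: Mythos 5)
Your proposal is correct and follows essentially the same route as the paper: the first equality is delegated to relation (\ref{eq8}) of Lemma \ref{new2}, and the identification with $\mathcal{S}(\mathbb{Z}^{n}\times\mathbb{T}^{n})$ rests on the two-sided polynomial bounds (\ref{growth}) for $\Lambda$ together with the arbitrariness of $m$, exactly as in the paper's seminorm estimate with a suitably negative order $\nu_{1}$ (the paper likewise invokes Lemma 4.1 of \cite{vish} for the unweighted identification). If anything, your write-up is slightly more complete, since you carry out the reverse inclusion $\mathcal{S}\subseteq\bigcap_{m}S_{\rho,\Lambda}^{m}$ with the sign analysis of $m-\rho|\alpha|$, a step the paper dismisses as following directly.
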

	\begin{proof}
		Let $\sigma \in\bigcap_{m \in \mathbb{R}}  S_{\rho, \Lambda }^{m}\left(\mathbb{Z}^{n} \times \mathbb{T}^{n}\right).$   Let $\alpha, \beta$, $\tilde{\alpha}$ and $\tilde{\beta}$ be multi-indices in  $\mathbb{Z}^n$. Then there exists a real number $\nu_{1}$ such that
		$$
		\rho	|\alpha|+\nu_{1}-|\tilde{\beta}| \leq 0.
		$$
		Since $\sigma \in\bigcap_{m \in \mathbb{R}}  S_{\rho, \Lambda }^{m}\left(\mathbb{Z}^{n} \times \mathbb{T}^{n}\right)$ then $\sigma \in  S_{\rho, \Lambda }^{\nu_1}\left(\mathbb{Z}^{n} \times \mathbb{T}^{n}\right)$. Now 
		$$
		\begin{aligned}
			\sup _{(k, x) \in \mathbb{Z}^{n} \times \mathbb{T}^{n}}\left|k^{\alpha} x^{\beta} (D_{x}^{(\tilde{\alpha})} \Delta_{k}^{\tilde{\beta}} \sigma )(k, x)\right| & \leq \sup _{(k, x) \in \mathbb{Z}^{n} \times \mathbb{T}^{n}}|k|^{|\alpha|}\left| D_{x}^{(\tilde{\alpha})} \Delta_{k}^{\tilde{\beta}} \sigma(k, x)\right| \\
			& \leq C_{\nu_{1}, \tilde{\beta}, \tilde{\alpha}} \sup _{k \in \mathbb{Z}^{n}, x \in \mathbb{T}^{n}}\Lambda(k)^{\nu_1+\rho|\alpha|-|\tilde{\beta}|}<\infty,
		\end{aligned}
		$$
		where  $C_{\nu_{1}, \tilde{\beta}, \tilde{\alpha}}$ is a positive constant. This implies that  $\sigma \in \mathcal{S}\left(\mathbb{Z}^{n} \times \mathbb{T}^{n}\right)$. Other part  $\mathcal{S}\left(\mathbb{Z}^{n} \times \mathbb{T}^{n}\right) \subseteq \bigcap_{m \in \mathbb{R}}  S_{\rho, \Lambda }^{m}\left(\mathbb{Z}^{n} \times \mathbb{T}^{n}\right)$ follows directly. 
	\end{proof}

	Let $\sigma \in M_{\rho, \Lambda }^{0}\left(\mathbb{Z}^{n} \times \mathbb{T}^{n}\right)$ be $M$-elliptic.  Then there exists a symbol $\tau \in   M_{\rho, \Lambda }^{0}\left(\mathbb{Z}^{n} \times \mathbb{T}^{n}\right) $ such that 
	$$
	T_{\tau} T_{\sigma}=I-T_{1}
	$$
	and
	$$
	T_{\sigma} T_{\tau}=I-T_{2}
	$$
	where $T_{1}$ and $T_{2}$ are pseudo-differential operators with symbols $\tau_{1}$ and $\tau_{2}$   in $\bigcap_{m \in \mathbb{R}}  M_{\rho, \Lambda }^{m}\left(\mathbb{Z}^{n} \times \mathbb{T}^{n}\right)$. Let $f \in \mathcal{S}\left(\mathbb{Z}^{n} \times \mathbb{T}^{n}\right)$. Then for for all $k \in \mathbb{Z}^{n}$
	$$
	\left(T_{j} f\right)(k)=\int_{\mathbb{T}^{n}} e^{2 \pi i k \cdot x} \tau_{j}(k, x) \widehat{f}(x) d x, \quad j=1,2
	$$
	and by Lemma \ref{eq11}, for $j=1,2, \tau_{j} \in \mathcal{S}\left(\mathbb{Z}^{n} \times \mathbb{T}^{n}\right)$. Then $T_{1}$ and $T_{2}$ are trace class operators by Theorem 5.3 in \cite{bot}. Moreover, we have 
	$$
	\operatorname{Tr}\left(T_{j}\right)=\sum_{k \in \mathbb{Z}^{n}} \int_{\mathbb{T}^{n}} \tau_{j}(k, x) d x, \quad j=1, 2.
	$$
	Thus from  Theorem 20.13 of \cite{wong99}, the index 
	$i(T_\sigma)$  of $T_\sigma$ is given by   
	$$
	i\left(T_{\sigma}\right)=\operatorname{Tr}\left(T_{1}\right)-\operatorname{Tr}\left(T_{2}\right)=\sum_{k \in \mathbb{Z}^{n}} \int_{\mathbb{T}^{n}}\tau_{1}(k, x) -\tau_{2}(k, x)\; d x.
	$$

	\section*{Acknowledgement}
	
	VK is supported  by the FWO  Odysseus  1  grant  G.0H94.18N:  Analysis  and  Partial Differential Equations and  Partial Differential Equations and by the Methusalem programme of the Ghent University Special Research Fund (BOF) 	(Grant number 01M01021). SSM thanks IIT Guwahati, India, for the support  provided during the period of this work.

\end{document}